\newcommand{\wt }{\widetilde}
\newcommand{\pro}{probabilit}
\newcommand{\bfw}{{\mathbf w}}
\newcommand{\bfv}{{\mathbf v}}
\newcommand{\bfA}{{\mathbf A}}
\newcommand{\bfT}{{\mathbf T}}
\newcommand{\bfR}{{\mathbf R}}
\newcommand{\bfD}{{\mathbf D}}
\newcommand{\bfN}{{\mathbf N}}
\newcommand{\bfM}{{\mathbf M}}
\newcommand{\bfF}{{\mathbf F}}
\newcommand{\bfH}{{\mathbf H}}
\newcommand{\bfP}{{\mathbf P}}
\newcommand{\fct}{function}
\newcommand{\slvary}{slowly varying}
\newcommand{\regvar}{regular variation}
\newcommand{\regvary}{regularly varying}
\newcommand{\st}{such that}
\newcommand{\stas}{\stackrel{\rm a.s.}{\rightarrow}}
\newcommand{\std}{\stackrel{\rm d}{\rightarrow}}
\newcommand{\stp}{\stackrel{\P}{\rightarrow}}
\newcommand{\la}{\lambda}
\newcommand{\ds}{distribution}
\newcommand{\beao}{\begin{eqnarray*}}
\newcommand{\eeao}{\end{eqnarray*}}
\newcommand{\beam}{\begin{eqnarray}}
\newcommand{\eeam}{\end{eqnarray}}
\definecolor{darkblue}{rgb}{.1, 0.1,.8}
\definecolor{darkgreen}{rgb}{0,0.8,0.2}
\definecolor{darkred}{rgb}{.8, .1,.1}
\newcommand{\bco}{\begin{corrolary}}
\newcommand{\eco}{\end{corrolary}}
\newcommand{\E}{\mathbb{E}}
\renewcommand{\P}{\mathbb{P}}
\newcommand{\1}{\mathbf{1}}
\newcommand{\R}{\mathbb{R}}
\newcommand{\bfC}{{\mathbf C}}
\newcommand{\bfB}{{\mathbf B}}
\newcommand{\Z}{\mathbb{Z}}
\newcommand{\Frechet}{Fr\'{e}chet }
\newcommand{\Y}{{\mathbf Y}}
\newcommand{\M}{{\mathbf M}}
\newcommand{\A}{{\mathbf A}}
\newcommand{\bfZ}{{\mathbf Z}}
\newcommand{\z}{{\mathbf Z}}
\newcommand{\dint}{\,\mathrm{d}}
\newcommand{\twonorm}[1]{\|#1\|_2}
\newcommand{\inftynorm}[1]{\|#1\|_\infty}
\newcommand{\frobnorm}[1]{\|#1\|_F}
\newcommand{\ltwonorm}[1]{\|#1\|_{\ell_2}}
\newcommand{\vep}{\varepsilon}
\newcommand{\nto}{n \to \infty}
\newcommand{\xto}{x \to \infty}
\newcommand{\lhs}{left-hand side}
\newcommand{\rhs}{right-hand side}
\newcommand{\ts}{time series}
\newcommand{\rv}{random variable}
\newcommand{\tr}{\operatorname{tr}}
\newcommand{\diag}{\operatorname{diag}}
\newcommand{\Dr}{D^{\rightarrow}}
\newcommand{\Dc}{D^{\downarrow}}
\newcommand{\bfe}{{\mathbf e}}
\newcommand{\slln}{strong law of large numbers}
\newcommand{\clt}{central limit theorem}
\newcommand{\ex}{{\rm e}\,}
\def\tag{\refstepcounter{equation}\leqno }
\newtheorem{lemma}{Lemma}[section]
\newtheorem{theorem}[lemma]{Theorem}
\newtheorem{proposition}[lemma]{Proposition}
\newtheorem{corollary}[lemma]{Corollary}
\newtheorem{remark}[lemma]{Remark}
\newcommand{\cid}{\stackrel{d}{\rightarrow}}
\newcommand{\cip}{\stackrel{\P}{\rightarrow}}
\newcommand{\civ}{\stackrel{v}{\rightarrow}}
\newcommand{\pp}{point process}
\newcommand{\con}{convergence}
\newcommand{\seq}{sequence}
\newcommand{\ms}{measure}
\newcommand{\asy}{asymptotic}
\begin{document}
\today
\title[Eigenvalues and eigenvectors of heavy-tailed sample covariance matrices]{Eigenvalues and eigenvectors of heavy-tailed sample covariance matrices with general growth rates: the iid case}
\thanks{Johannes Heiny's and Thomas Mikosch's research is partly supported by the Danish Research Council Grant DFF-4002-00435 ``Large random matrices with heavy tails and dependence''.}
\author[Johannes Heiny]{Johannes Heiny}
\author[Thomas Mikosch]{Thomas Mikosch}
\address{Department  of Mathematical Sciences,
University of Copenhagen,
Universitetsparken 5,
DK-2100 Copenhagen,
Denmark}
\email{johannes.heiny@math.ku.dk}
\email{mikosch@math.ku.dk\,, www.math.ku.dk/$\sim$mikosch}

\begin{abstract}
In this paper we study the joint \ds al \con\ of the largest eigenvalues of the sample covariance matrix
of a $p$-dimensional \ts\ with iid entries when $p$ converges to infinity together with the sample size $n$. We consider
only heavy-tailed \ts\ in the sense that the entries satisfy some \regvar\ condition which ensures that
their fourth moment is infinite. In this case, Soshnikov~\cite{soshnikov:2004,soshnikov:2006} and 
Auffinger et al.~\cite{auffinger:arous:peche:2009} proved the weak \con\ of the \pp es of the normalized eigenvalues 
of the sample covariance matrix towards an inhomogeneous Poisson process which implies in turn that the largest eigenvalue
converges in \ds\ to a Fr\'echet distributed \rv . They proved these results under the assumption that $p$ and $n$ are proportional
to each other. In this paper we show that the aforementioned results remain valid if $p$ grows at any polynomial rate. The proofs 
are different from those in \cite{auffinger:arous:peche:2009,soshnikov:2004,soshnikov:2006}; we employ large deviation 
techniques to achieve them. The proofs reveal that only the diagonal of the sample covariance matrix is relevant for the \asy\ behavior of the largest eigenvalues and the corresponding eigenvectors which are close to the canonical basis vectors.  
We also discuss extensions of the results to sample autocovariance matrices.
\end{abstract}
\keywords{Regular variation, sample covariance matrix, independent entries,
largest eigenvalues, eigenvectors, point process
  convergence, compound Poisson limit, Fr\'echet distribution}
\subjclass{Primary 60B20; Secondary 60F05 60F10 60G10 60G55 60G70}

\maketitle

\section{Introduction}\label{sec:motivation}\setcounter{equation}{0}
In recent years we have seen a vast increase in the number and sizes of data sets. 
Science  (meteorology, telecommunications, genomics, \ldots), society (social networks, finance, military and civil intelligence, \ldots) and industry 
need to extract valuable information from high-dimensional data sets
which are often too large or 
complex to be processed by traditional means. 
In order to explore the structure of data one often studies the dependence via 
(sample) covariances and correlations. 
Often dimension reduction techniques facilitate further analyzes of large data matrices. For example, 
{\em principal component analysis}  (PCA) transforms the data linearly such that only a few of the resulting vectors 
contain most of the variation in the data. These {\em principal component vectors} 
are the eigenvectors associated with the largest eigenvalues of the sample covariance matrix.
\par 
The aim of this paper is to investigate the \asy\ properties of the 
largest {\em eigenvalues} and their corresponding {\em eigenvectors} for
sample covariance matrices of high-dimensional heavy-tailed time series with iid entries.
Special emphasis is given to the case 
when the dimension $p$ and the sample size $n$ tend to infinity simultaneously, not necessarily at the same rate. 
\par
Throughout we consider the $p\times n$ data matrix 
\beao
\bfZ =\bfZ_n = \big(Z_{it}\big)_{i=1,\ldots,p;t=1,\ldots,n}\,
\eeao
A column of $\bfZ$ represents an observation of a $p$-dimensional time series.
We assume that the entries $Z_{it}$ are real-valued, independent and identically distributed (iid), 
unless stated otherwise. We write $Z$ for a generic element and assume $\E[Z]=0$ and $\E [Z^2]=1$ 
if the first and second moments of $Z$ are finite, respectively.
We are interested in limit theory for the eigenvalues $\lambda_1,\ldots,\lambda_p$ of the {\em sample covariance matrix} $\bfZ\bfZ'$
and their ordered values 
\beam\label{eq:order}
\la_{(1)}\ge \cdots \ge \la_{(p)}\,.
\eeam
In this notation we suppress 
the dependence of $(\la_i)$ on $n$. We will only discuss the case when $p \to \infty$; for the finite $p$ case 
we refer to \cite{anderson:1963,muirhead}.

\subsection{The light-tailed case}
In random matrix theory a lot of attention has been given to the {\em empirical spectral distribution \fct } of 
the sequence $(n^{-1}\z \z')$:
\begin{equation*}
F_{n^{-1}\z \z'}(x)= \frac{1}{p}\; \# \{ 1\le j\le p : n^{-1} \lambda_j \le x \}, \qquad x\ge 0\,,\quad n\ge1.
\end{equation*}
In the literature convergence results for $(F_{n^{-1}\z \z'})$  
are established under the assumption that $p$ and $n$ grow at the same rate:
\beam\label{eq:gamma}
\dfrac p n\to \gamma\qquad \mbox{for some $\gamma \in (0,\infty)$.}
\eeam 
Suppose that the iid entries $Z_{it}$ have mean $0$ and variance $1$. If \eqref{eq:gamma} holds then, with probability one, 
$(F_{n^{-1}\z \z'})$ converges to the  Mar\v cenko--Pastur law with absolutely  continuous part given by the density,
\begin{eqnarray}\label{eq:MP}
f_\gamma(x) = 
\left\{\begin{array}{cc}
\frac{1}{2\pi x\gamma} \sqrt{(b-x)(x-a)} \,, & \mbox{if } a\le x \le b, \\
 0 \,, & \mbox{otherwise,}
\end{array}\right.
\end{eqnarray}\noindent
where $a=(1 -\sqrt{\gamma})^2$ and $b=(1 +\sqrt{\gamma})^2$. For  $\gamma>1$
the Mar\v cenko--Pastur law has an additional point mass $1-1/\gamma$ at the origin; see 
Bai and Silverstein~\cite[Chapter~3]{bai:silverstein:2010}. 
This  mass is intuitively explained by the fact that, with probability $1$, $\min(p,n)$ 
eigenvalues $\lambda_i$ are non-zero. When $n=(1/\gamma) \; p$ and $\gamma >1$ the fraction of non-zero eigenvalues 
is $1/\gamma$ while the fraction of zero eigenvalues is $1-1/\gamma$.
\par
The moment condition $\E[Z^2]<\infty$ is crucial for deriving the Mar\v cenko--Pastur limit law. 
When studying the largest eigenvalues of the sample covariance matrix $\z\z'$ the moment condition $\E[Z^4]<\infty$ plays a similarly 
important role; we assume it in the remainder of this subsection. 
If \eqref{eq:gamma} holds and the iid entries $Z_{it}$ have zero mean and unit variance, 
Geman~\cite{geman} showed that
\beam\label{eq:geman}
\dfrac {\la_{(1)}}{n} \stas \big(1+\sqrt{\gamma}\big)^2\,,\qquad \nto\,.
\eeam
This means that $\la_{(1)}/n$ converges to the right endpoint of the Mar\v cenko--Pastur law in \eqref{eq:MP}.
Johnstone \cite{johnstone:2001} complemented this \slln\ by the corresponding \clt\ in the special case of iid standard normal 
entries:
\beam\label{eq:tc}
\frac{\lambda_{(1)}-\mu_{n,p}}{\sigma_{n,p}} \cid \xi,
\eeam
where the limiting \rv\ has a {\em Tracy--Widom \ds} of order 1 and the centering and scaling constants are
\begin{equation*}
\mu_{n,p}=(\sqrt{n-1}+ \sqrt{p})^2, \quad \sigma_{n,p}=(\sqrt{n-1}+ \sqrt{p}) \Big( \frac{1}{\sqrt{n-1}} +\frac{1}{\sqrt{p}}\Big)^{1/3}\,;
\end{equation*}
see Tracy and Widom~\cite{tracy:widom:2012} for details. 
Ma~\cite{zongming:2012} showed Berry--Esseen-type bounds for  
\eqref{eq:tc}. 
\par
Asymptotic theory for the largest eigenvalues of sample covariance matrices with 
non-Gaussian entries is more complicated; 
pioneering work is due to Johansson \cite{johansson}. 
Johnstone's result was extended to matrices $\z$ with iid non-Gaussian entries 
by Tao and Vu \cite[Theorem~1.16]{tao09b}, assuming that the first four moments of $Z$ match  those of 
the normal \ds. 
Tao and Vu's result is a consequence of the so-called {\em Four Moment Theorem} 
which describes the insensitivity of the eigenvalues with respect to changes in the distribution of the entries. 
To some extent (modulo the strong moment matching conditions) it shows the universality of Johnstone's limit 
result \eqref{eq:tc}.
\par
In the light-tailed case little is known when $p$ and $n$ grow at different rates, i.e., $\lim p/n \in\{ 0,\infty\}$. 
Notable exceptions are El Karoui \cite{elkaroui:2003} who proved that Johnstone's result (assuming iid standard normal entries)
remains valid when $p/n\to 0$ or $n/p\to\infty$, and P{\'e}ch{\'e} \cite{peche:2009} who showed universality results for the largest eigenvalues of some sample covariance matrices with non-Gaussian entries.

\subsection{The heavy-tailed case}

Distributions of which certain moments cease to exist are often called heavy-tailed. 
So far we reviewed theoretical results where the data matrix $\z$ was ``light-tailed" in the following sense:
for the \ds al convergence of the empirical 
spectral distribution  and the largest eigenvalue of the sample covariance matrix  towards the 
Mar\v cenko--Pastur 
and Tracy-Widom \ds s, respectively, we  
required finite second/fourth moments of the entries.
\par
The behavior of the largest eigenvalue $\la_{(1)}$ 
changes dramatically when $\E[Z^4]=\infty$.
Bai and Silverstein~\cite{baisilv} proved for an $n\times n$ matrix $\z$ with iid centered entries 
that
\beam\label{eq:wdfr}
\limsup_{\nto} \dfrac{\la_{(1)}}{n}=\infty \qquad {\rm a.s.}
\eeam
This is in stark contrast to Geman's result \eqref{eq:geman}. 
\par
Following classical limit theory for partial sum processes and maxima, we require more than an infinite fourth moment.
We assume a {\em \regvar\ condition} on the tail of $Z$:
\beam\label{eq:regvar}
\P(Z>x)\sim p_+\,\dfrac{L(x)}{x^{\alpha}}\qquad\mbox{and}\qquad \P(Z<-x) \sim p_-\,\dfrac{L(x)}{x^\alpha}\,,\qquad \xto\,,
\eeam
for some $\alpha\in (0,4)$, where $ p_\pm$ are non-negative constants \st\ $p_++p_-=1$ and $L$ is a \slvary\ \fct .  
We will also refer to $Z$ as a \regvary\ \rv , $\z$ as a \regvary\ matrix, etc.
{\em Here and in what follows, we normalize the eigenvalues $(\la_i)$ by $(a_{np}^2)$ where the \seq\
$(a_k)$ is chosen \st } 
\beao
\P(|Z|>a_k)\sim k^{-1}\,,\quad k\to\infty.
\eeao
Standard theory for \regvary\ \fct s (e.g. Bingham et al. \cite{bingham:goldie:teugels:1987}, Feller \cite{feller})
yields that $a_n=n^{1/\alpha} \ell(n)$ where $\ell$ is a slowly varying function. Assuming \eqref{eq:gamma} for $p$,
the Potter bounds (see \cite[p.~25]{bingham:goldie:teugels:1987}) yield for $\alpha\in (0,4)$ that
\begin{equation}\label{eq:toinf}
 \frac{a_{np}^2}{n} \sim \frac{n^{4/\alpha} \gamma^{2/\alpha}\,\ell^2(n^2 \gamma)}{n} \to \infty, \qquad \nto\,,
\end{equation}
i.e., the normalization $a_{np}^2$ is stronger than $n$.
\par
The eigenvalues $(\lambda_i)$ of a heavy-tailed matrix $\z\z'$ were 
studied first by Soshnikov~\cite{soshnikov:2004,soshnikov:2006}. He showed under 
\eqref{eq:gamma} and \eqref{eq:regvar} for $\alpha \in (0,2)$ that  
\begin{equation}\label{eq:larg}
\dfrac{\lambda_{(1)}}{a_{np}^2} \cid \zeta, \quad \nto,
\end{equation}
where $\zeta$ follows a {\em \Frechet distribution} with parameter $\alpha/2$:
\beao
\Phi_{\alpha/2}(x) =\ex^{-x^{-\alpha/2}}\,,\qquad x>0\,.
\eeao
Later Auffinger et al.~\cite{auffinger:arous:peche:2009} established \eqref{eq:larg} also for $\alpha \in [2,4)$ 
under the additional assumption that the entries are centered. 
Both Soshnikov \cite{soshnikov:2004,soshnikov:2006}
and Auffinger et al.~\cite{auffinger:arous:peche:2009} proved convergence of the point processes of normalized eigenvalues, 
from which one can easily infer the joint limiting distribution of the $k$ largest eigenvalues.  
Davis et al.~\cite{davis:mikosch:pfaffel:2015,davis:pfaffel:stelzer:2014} 
extended these results allowing for more general growth of $p$ 
than dictated by \eqref{eq:gamma} and a linear dependence structure between the rows and columns of $\z$; see also Chakrabarty et al.~\cite{chakrabarty:hazra:roy:20013} and the overview
paper Davis et al.~\cite{davis:mikosch:heiny:xie:2015}. {\em The study of eigenvectors of heavy-tailed sample covariance matrices is a fresh topic, which has not been explored in the literature listed here.}
\par
For the sake of completeness we mention 
that, under \eqref{eq:gamma} with $\gamma\in (0,1]$,  
\eqref{eq:regvar} with $\alpha\in (0,2)$ and $\E[Z]=0$ if the latter expectation is defined,
the empirical spectral \ds\ $F_{a_{n+p}^{-2}\z \z'}$ converges weakly with \pro y one to a deterministic 
\pro y \ms\ whose density $\rho_{\alpha}^\gamma$ satisfies
\begin{equation*}
\rho_{\alpha}^\gamma(x) x^{1+\alpha/2} \to \frac{\alpha \gamma}{2(1+\gamma)}, \qquad x \to \infty\,,
\end{equation*}
see Belinschi et al.~\cite[Theorem~1.10]{belinschi:dembo:guionnet:2009} 
and Ben Arous and Guionnet \cite[Theorem~1.6]{arous:guionnet:2008}.


\subsection{Structure of the paper}
The primary objective of this paper is to study the joint distribution of the largest eigenvalues of the sample covariance matrix $\z\z'$
in the case of iid regularly varying entries with infinite fourth moment.
We make a connection between extreme value theory, point process \con\ and the behavior of the largest eigenvalues. 
We study these eigenvalues under polynomial 
growth rates of the dimension $p$ relative to the sample size $n$. 
It turns out that they are essentially determined by the extreme diagonal elements of $\bfZ\bfZ'$ or, alternatively, 
by the extreme order statistics of the squared entries of $\bfZ$.  
\par
In Section~\ref{sec:2} we consider power-law growth rates of $(p_n)$, 
thereby generalizing proportional growth as prescribed by \eqref{eq:gamma}.
Our main results are presented in Section~\ref{sec:mainresults}. 
Theorem~\ref{thm:iidmain} provides approximations of the ordered eigenvalues of the sample covariance matrix either by
the ordered diagonal elements of $\bfZ\bfZ'$ or $\bfZ'\bfZ$, or by the order statistics of the squared entries of $\bfZ$.
These approximations provide a clear picture where the largest eigenvalues of the sample covariance matrix originate from. 
Our results generalize those in Soshnikov~\cite{soshnikov:2004,soshnikov:2006} and 
Auffinger et al.~\cite{auffinger:arous:peche:2009} who assume proportionality of $p$ and $n$.
The employed techniques originate from extreme value analysis and large deviation theory; the proofs differ from those 
in the aforementioned literature. 
The same techniques can be applied when the entries of $\bfZ$ 
are heavy-tailed and allow for dependence through the rows and across the columns; see 
Davis et al.~\cite{davis:mikosch:pfaffel:2015,davis:pfaffel:stelzer:2014} for some recent attempts 
when the entries satisfy some linear dependence conditions. In the iid case, these results are covered by the present paper
and we also show that they remain valid under much more general growth conditions than in \cite{davis:mikosch:pfaffel:2015,davis:pfaffel:stelzer:2014}. In particular, we make clear that centering of the sample covariance matrix (as assumed in 
\cite{davis:mikosch:pfaffel:2015,davis:pfaffel:stelzer:2014} when $Z$ has a finite second moment) is not needed. Thus, our techniques are applicable under rather general dependence structures. We refer to the recent work by Janssen et al.~\cite{janssen:mikosch:rezapour:xie:2016} on eigenvalues of stochastic volatility matrix models, where non-linear dependence was allowed.

The convergence of the point processes of the properly normalized eigenvalues in Section~\ref{sec:ppconvergence} yields a multitude of useful findings connected to the joint distribution of the eigenvalues. As an application, the structure of the eigenvectors of $\z\z'$ is explored in Section~\ref{sec:eigenvectors}.
Technical proofs are collected in Section~\ref{sec:proofs}.
Section~\ref{sec:autocov} is devoted to an extension of the results to the singular values of the 
{\em sample autocovariance matrices}  which are a generalization of the traditional autocovariance function for time series to high-dimensional matrices. In applications, the analysis of sample autocovariance matrices for different lags might help to detect dependencies in the data; see Lam and Yao \cite{lam:yao} for related work.
 We conclude with Appendix~\ref{appendix:A} which contains useful facts about regular variation and point processes.

\section{Preliminaries}\label{sec:2}\setcounter{equation}{0}
In this section we will discuss growth rates for $p=p_n\to\infty$ and introduce some notation. 
\subsection{Growth rates for $p$}
In many applications it is not realistic to assume 
that the dimension $p$ of the data and the sample size $n$ grow at the same rate, i.e., condition \eqref{eq:gamma} is unlikely to be
satisfied.
The aforementioned results of Soshnikov~\cite{soshnikov:2004,soshnikov:2006} and Auffinger et al.~\cite{auffinger:arous:peche:2009} already show that the value $\gamma$ in the growth
rate~\eqref{eq:gamma} does not appear in the \ds al limits. 
This obervation is in contrast to the light-tailed case; see \eqref{eq:MP} 
and \eqref{eq:geman}.  Davis et al.~\cite{davis:mikosch:pfaffel:2015,davis:pfaffel:stelzer:2014} allowed for more general rates for
$p_n\to\infty$ than linear growth in $n$. {\em However, they could not completely solve the technical difficulties arising with general growth rates of $p$.}
In what follows, we 
specify the growth rate of $(p_n)$:
\begin{equation}\label{eq:p}
p=p_n=n^\beta \ell(n), \qquad n\ge1,\tag{$C_p(\beta)$}
\end{equation}
where $\ell$ is a slowly varying function and $\beta\ge 0$. If $\beta =0$, we also assume $\ell(n) \to \infty$. 
Condition~\ref{eq:p} is more general than the growth conditions in the literature; see 
\cite{auffinger:arous:peche:2009,davis:mikosch:pfaffel:2015,davis:pfaffel:stelzer:2014}. 

\subsection{Notation}\label{sec:section2.2}
Recall that $\z=\z_n= (Z_{it})_{i=1,\ldots,p;t=1,\ldots,n}$ is a $p\times n$ matrix with iid entries satisfying the regular variation 
condition \eqref{eq:regvar} for some $\alpha\in (0,4)$.
The sample covariance matrix $\z \z'$ has eigenvalues $\lambda_1, \ldots, \lambda_p$ whose order statistics were defined in \eqref{eq:order}. 

Important roles are played by the quantities $(Z_{it}^2)_{i=1,\ldots,p;t=1,\ldots,n}$ and their order statistics
 \begin{equation}\label{eq:zorder}
Z_{(1),np}^2 \ge Z_{(2),np}^2 \ge  \ldots \ge Z_{(np),np}^2, \qquad n,p\ge 1\,.
\end{equation}
As important are the row-sums 
\begin{equation}\label{eq:ll}
D_i^\rightarrow=D_i^{(n),\rightarrow}=\sum_{t=1}^n Z_{it}^2\,, \qquad
i=1,\ldots,p\,;\quad  n=1,2,\ldots\,,
\end{equation} with generic element $D^\rightarrow$ and their ordered values
\beam\label{eq:help6}
D_{(1)}^\rightarrow=D_{L_1}^\rightarrow\ge \cdots \ge D_{(p)}^\rightarrow=D_{L_p}^\rightarrow\,,
\eeam
where we assume without loss of generality that $(L_1,\ldots,L_p)$ is
a permutation of $(1,\ldots,p)$ for fixed~$n$.
\par
Finally, we introduce the column-sums
\beam\label{eq:colsum}
D_t^\downarrow=D_t^{(n),\downarrow}= \sum_{i=1}^p Z_{it}^2\,,\qquad t=1,\ldots,n\,; \quad p=1,2,\ldots\,,
\eeam
with generic element $D^\downarrow$ and we also adapt the notation from \eqref{eq:help6} to these quantities.

\subsubsection*{Norms} 
For any $p$-dimensional vector $\bfv$, $\ltwonorm{\bfv}$ denotes its Euclidean norm.
For any $p\times p$ matrix $\bfC$, we write $\la_i(\bfC)$ for its $p$ singular values and we denote their order statistics by
\beao
\la_{(1)}(\bfC) \ge\cdots \ge \la_{(p)}(\bfC)\,. 
\eeao 
For any $p\times n $ matrix $\A=(a_{ij})$, we will use the 
{\em spectral norm}
$\|\A\|_2=\sqrt{\la_{(1)}(\A\A')}$, the 
{\em Frobenius norm} 
$
\frobnorm{\A}= \Big( \sum_{i=1}^p \sum_{j=1}^n |a_{ij}|^2\Big)^{1/2}
$ and the 
{\em max-row sum norm}
$
\inftynorm{\bfA}= \max_{i=1,\ldots,p} \sum_{j=1}^n |a_{ij}|\,.
$

\section{Main results}\label{sec:mainresults}\setcounter{equation}{0}
\subsection{Basic approximations}
We commence with some basic approximation results for the eigenvalues and eigenvectors of $\z\z'$.
The approximating quantities have a simple structure and their \asy\ behavior is inherited by the eigenvalues
and has influence on the eigenvectors.
\begin{theorem}\label{thm:iidmain}
Consider a $p\times n$-dimensional matrix $\z$ with iid entries. We assume the following conditions:
\begin{itemize} \item
The \regvar\ condition \eqref{eq:regvar} for some  
$\alpha \in (0,4)$. 
\item $\E [Z]=0$ for $\alpha \ge 2$.
\item
The integer \seq\ $(p_n)$ 
has growth rate \ref{eq:p} for some  $\beta\ge 0$.
\end{itemize}
Then the following statements hold:
\begin{enumerate}
\item If $\beta\in [0,1]$, then
\beam\label{eq:rowa}
a_{np}^{-2}\,\max_{i=1,\ldots,p}\big|\la_{(i)}-\Dr_{(i)}\big|\stp 0\,.
\eeam
\item If $\beta > 1$, then
\beam\label{eq:cola}
a_{np}^{-2}\,\max_{i=1,\ldots,n}\big|\la_{(i)}-\Dc_{(i)}\big|\stp 0\,.
\eeam
\item If  $\min(\beta,\beta^{-1} ) \in ((\alpha/2-1)_+,1]$, then
\beam\label{eq:eigena}
a_{np}^{-2}\,\max_{i=1,\ldots,p}\big|\la_{(i)}-Z_{(i),np}^2\big|\stp 0\,.
\eeam
\end{enumerate}
\end{theorem}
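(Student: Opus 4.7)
My plan is to reduce all three assertions to a single spectral-norm bound on the off-diagonal part of $\bfZ\bfZ'$ via Weyl's perturbation inequality: for any symmetric matrices $A,B$ of the same size,
\[
\max_i |\lambda_{(i)}(A)-\lambda_{(i)}(B)| \le \|A-B\|_2.
\]
Set $\bfD_R=\diag(D_1^\rightarrow,\ldots,D_p^\rightarrow)$, which is precisely the diagonal of $\bfZ\bfZ'$, and write $\bfR=\bfZ\bfZ'-\bfD_R$. Since the eigenvalues of $\bfD_R$ ordered decreasingly are $(D_{(i)}^\rightarrow)$, Weyl gives $\max_i|\lambda_{(i)}-D_{(i)}^\rightarrow|\le\|\bfR\|_2$, reducing \eqref{eq:rowa} to $a_{np}^{-2}\|\bfR\|_2\stp 0$. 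For part (2) I would apply the same argument to the $n\times n$ Gram matrix $\bfZ'\bfZ$, which shares the nonzero spectrum of $\bfZ\bfZ'$, observing that its diagonal is $\diag(D_1^\downarrow,\ldots,D_n^\downarrow)$ and that swapping the roles of $p$ and $n$ turns the growth exponent $\beta$ into $\beta^{-1}\le 1$.

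\textbf{Controlling $\|\bfR\|_2$.} Since $\bfR$ is symmetric, $\|\bfR\|_2\le\inftynorm{\bfR}=\max_i\sum_{j\ne i}\bigl|\sum_t Z_{it}Z_{jt}\bigr|$. I would establish the required tail bound on $\inftynorm{\bfR}$ by a truncation argument: split each entry as $Z_{it}=Z_{it}\1_{\{|Z_{it}|\le c_n\}}+Z_{it}\1_{\{|Z_{it}|>c_n\}}$ at a level $c_n$ slightly below $a_{np}$, so that with high \pro y no two large entries share a column and the ``big'' contribution to $\bfR$ is asymptotically negligible. The ``small'' contribution splits further into a mean part, which disappears when $\alpha\ge 2$ thanks to the centering $\E[Z]=0$, and a centered bilinear form that can be controlled by a Fuk--Nagaev/Bernstein-type inequality, combined with a union bound over the at most $p^2$ pairs $(i,j)$ and a subsequent union bound over $i=1,\ldots,p$. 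A careful choice of $c_n$ using the Potter bounds on $a_{np}$ then yields $a_{np}^{-2}\inftynorm{\bfR}\stp 0$ for all $\beta\in[0,1]$, and the same scheme applied to $\bfZ'\bfZ$ covers $\beta>1$.

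\textbf{Part (3) and the main obstacle.} By the triangle inequality combined with part (1),
\[
a_{np}^{-2}\max_{i\le p}|\lambda_{(i)}-Z_{(i),np}^2|\le o_\P(1)+a_{np}^{-2}\max_{i\le p}|D_{(i)}^\rightarrow-Z_{(i),np}^2|
\]
when $\beta\le 1$, with an analogous reduction via part (2) when $\beta>1$. It therefore remains to compare the ordered row-sums with the ordered squared entries. Under \ref{eq:p} with $\beta\in((\alpha/2-1)_+,1]$, classical \evt\ for iid \regvary\ variables ensures that with \pro y tending to one the $p$ largest of $\{Z_{it}^2\}_{i,t}$ lie in $p$ distinct rows, so after relabelling, $D_{(i)}^\rightarrow-Z_{(i),np}^2$ coincides with a sum of row entries $Z_{jt}^2$ outside the $p$ record positions. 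For $\alpha\in(0,2)$ this bulk residual is $o(a_{np}^2)$ because $Z^2$ is \regvary\ with tail index $\alpha/2<1$ and the sum is asymptotically dominated by its maximum, which has already been extracted; for $\alpha\in[2,4)$ the \slln\ applied to $Z^2$ produces a bulk of order $n$, which is $o(a_{np}^2)$ precisely when $n/(np)^{2/\alpha}\to 0$, i.e., $\beta>\alpha/2-1$. The hard part will be the \ld\ estimate for $\inftynorm{\bfR}$ in paragraph two as $\alpha$ approaches $4$, where the centering yields only second-moment cancellation while the normalisation $a_{np}^2$ is comparatively small when $\beta$ sits near $0$ or $1$; calibrating $c_n$ so that the big-entry contribution and the centered-sum fluctuations vanish simultaneously after two successive union bounds is where the bulk of the technical work lies.
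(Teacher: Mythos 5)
Your overall architecture for parts (1)--(3) matches the paper's: Weyl's inequality reduces everything to showing $a_{np}^{-2}\twonorm{\bfZ\bfZ'-\diag(\bfZ\bfZ')}\stp 0$, and part (3) then follows by comparing the ordered row sums with the order statistics of the squared entries. The sketch of part (3) is also essentially the paper's route (one large entry per row with high probability, plus a negligible bulk of order $n=o(a_{np}^2)$ under $\beta>\alpha/2-1$), although making the ``bulk is negligible'' claim uniform over all $p$ rows is itself a nontrivial large-deviation lemma in the paper.

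The genuine gap is in your control of the off-diagonal spectral norm. You propose $\twonorm{\bfR}\le\inftynorm{\bfR}=\max_i\sum_{j\ne i}\bigl|\sum_t Z_{it}Z_{jt}\bigr|$ and then truncation plus Fuk--Nagaev. This cannot work throughout the stated range. For $\alpha\in(2,4)$ each off-diagonal entry $\sum_t Z_{it}Z_{jt}$ has absolute first moment of exact order $\sqrt n$, and summing $p-1$ such nonnegative quantities over $j$ gives a row sum that concentrates at order $p\sqrt n$: the absolute values destroy precisely the cancellation across $j$ that makes the spectral norm small, so no choice of truncation level can help. One needs $p\sqrt n=o(a_{np}^2)\approx(np)^{2/\alpha}$, i.e.\ $\alpha<(2+2\beta)/(\beta+1/2)$, which at $\beta=1$ fails for all $\alpha\ge 8/3$. (The Frobenius bound has the same order and the same limitation, which is exactly why the paper uses it only for $\alpha\in(0,8/3)$.) For $\alpha\in[8/3,4)$ the paper instead squares the off-diagonal matrix, decomposes $[\bfZ\bfZ'-\diag(\bfZ\bfZ')]^2=\bfD+\bfF+\bfR$, and controls the hardest piece by an iterated trace/high-moment method, bounding $\E\,\tr(\widetilde{\bfR}^{2^k})$ via a combinatorial count of index configurations and letting $k$ grow with $\alpha$. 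Some device of this kind --- exploiting moments of order higher than one or two of the quadratic form, not of the individual row sums --- is indispensable, and it is missing from your argument.
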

\begin{remark}
\rm In \eqref{eq:cola} we have chosen to take maxima over the index set $\{1,\ldots,n\}$. We notice that
$\la_{(i)}=0$ for $i=p\wedge n+1,\ldots,p\vee n$. This is due to the fact that the $p\times p$ matrix $\bfZ\bfZ'$ and the $n\times n$ matrix
$\bfZ'\bfZ$ have the same positive eigenvalues. Moreover, for $n$ sufficiently large, $p\wedge n=p$ for $\beta\in (0,1)$ and $p\wedge n=n$ for $\beta>1$, i.e., only in the case 
$\beta=1$ both cases $n\le p$ or $p\le n$ are possible.
\end{remark} 
\begin{remark}\rm 
The condition $\min(\beta,\beta^{-1} ) \in ((\alpha/2-1)_+,1]$ in part (3) is only a restriction when $\alpha\in (2,4)$. We notice 
that this condition implies $(n\vee p)/a_{np}^2\to 0$. In turn, this means that centering of the quantities $a_{np}^{-2}D_i^\rightarrow$ and $a_{np}^{-2}D_i^\downarrow$ in the limit theorems 
can be avoided.
This argument is relevant in various parts of the proofs.
\end{remark}
\begin{remark}\rm
In Figure~\ref{fig:lambda_comparison} we illustrate 
the different approximations of the eigenvalues $(\la_{(i)})$ by 
$(\Dr_{(i)})$ as suggested by \eqref{eq:rowa}
and $(Z_{(i),np}^2)$ as suggested by \eqref{eq:eigena}. For $Z$ we choose the density
\beam\label{eq:distrsim}
f_Z(x) =
\left\{\begin{array}{cc}
 \frac{\alpha}{(4|x|)^{\alpha + 1}}\,, & \mbox{if } |x| > 1/4 \\
1\,, & \mbox{otherwise.}
\end{array}\right.
\eeam
In the left graph, we focus on the largest eigenvalue $\la_{(1)}$. We show smoothed histograms of the approximation errors
$a_{np}^{-2}(\lambda_{(1)}-D_{(1)}^\rightarrow)$, $a_{np}^{-2}(\lambda_{(1)}-Z_{(1),np}^2)$.
By Cauchy's interlacing theorem (see \cite[Lemma~22]{tao:vu:2012}), the considered differences
are non-negative. 

In the right graph, we take the maxima as in \eqref{eq:rowa} and \eqref{eq:eigena} and show smoothed histograms of the approximation errors
$a_{np}^{-2}\max_{i\le p} |\lambda_{(i)}-D_{(i)}^\rightarrow|$, $a_{np}^{-2}\max_{i\le p}|\lambda_{(i)}-Z_{(i),np}^2|$. We take absolute values to deal with negative differences. 
Figure~\ref{fig:lambda_comparison} indicates 
that $(D_{(i)}^\rightarrow)$ yield a much better approximation to $(\la_{(i)})$ than $(Z_{(i),np}^2)$. Notice the different scaling on the $x$- and $y$-axes.
\begin{figure}[htb!]
  \centering
  \subfigure{
    \includegraphics[scale=0.4]{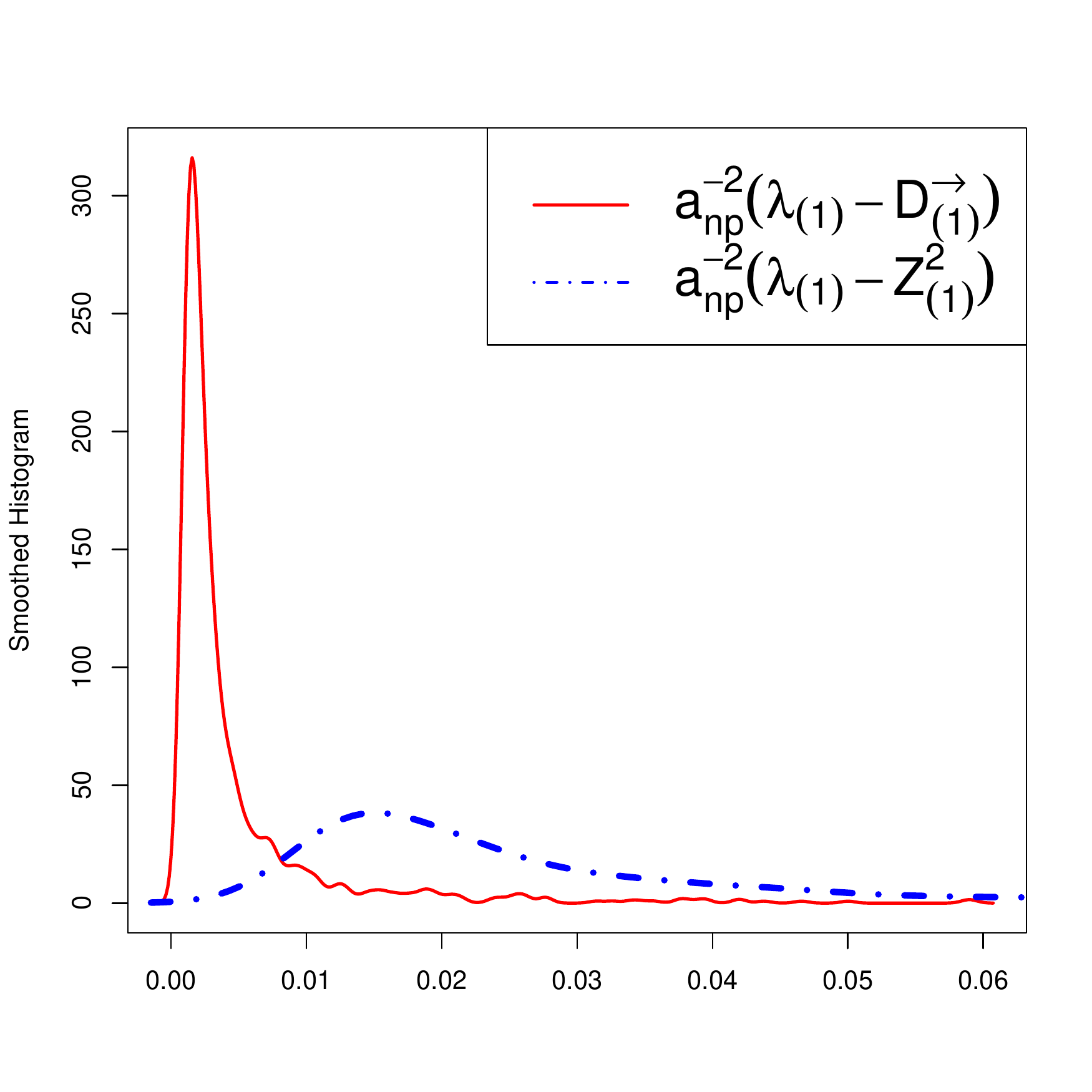}
  }
  \subfigure{
    \includegraphics[scale=0.4]{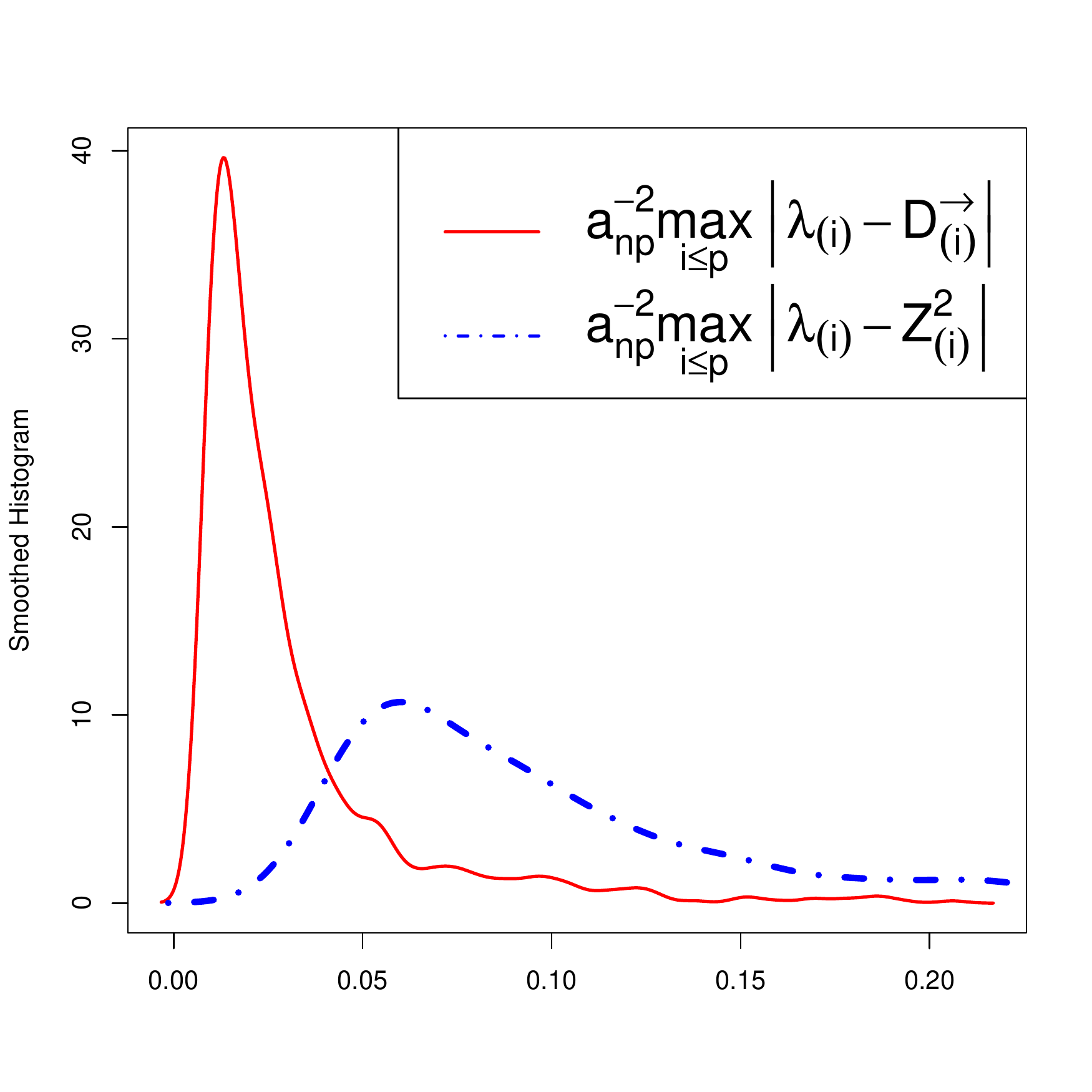}
  }
  \caption{Smoothed histograms of the  approximation errors for the normalized eigenvalues $(a_{np}^{-2}\la_{(i)})$
for entries $Z_{it}$ with density \eqref{eq:distrsim}, $\alpha=1.6$, $\beta=1$, $n=1,000$ and $p=200$.
}
  \label{fig:lambda_comparison}
\end{figure}
\end{remark}
\par 
The proof of Theorem~\ref{thm:iidmain} will be given in Section~\ref{sec:proofs}. A main step in the proof is provided by the following
result whose proof will also be given in Section~\ref{sec:proofs};
a version of this theorem was proved in Davis et al.~\cite{davis:mikosch:pfaffel:2015} 
under more restrictive conditions on the growth rate of $(p_n)$.

\begin{theorem}\label{prop:offdiagonal}
Assume the conditions of Theorem~\ref{thm:iidmain} on  $\z$ and $(p_n)$.
\begin{enumerate}
\item
If $\beta\in [0,1]$ we have
\beao
a_{np}^{-2} \twonorm{\bfZ \bfZ' - \diag(\bfZ \bfZ')} \cip 0\,,\qquad\nto\,.
\eeao
\item
If $\beta\ge 1$ we have 
\beao
a_{np}^{-2} \twonorm{\bfZ' \bfZ - \diag(\bfZ' \bfZ)} \cip 0\,,\qquad\nto\,.
\eeao
\end{enumerate}
\end{theorem}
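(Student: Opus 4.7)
I would first reduce part (2) to part (1): applying (1) to the transpose $\bfZ'$, which is $n\times p$ with iid entries and satisfies \ref{eq:p} with exponent $1/\beta\le 1$, yields (2) since $\bfZ'(\bfZ')'=\bfZ'\bfZ$ and $a_{n'p'}^2=a_{np}^2$. For part (1), where $\beta\in[0,1]$, the plan is the standard truncation approach for heavy-tailed matrices: split the entries at level $\delta a_{np}$ for small $\delta>0$, handle the few extremal entries by an explicit Poisson-exceedance argument, and control the truncated bulk via a moment/trace bound.

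\textbf{Truncation and extremal part.} Set $\tilde Z_{it}=Z_{it}\mathbf{1}\{|Z_{it}|\le\delta a_{np}\}$, recentered to mean zero when $\alpha\ge 2$ (the induced deterministic rank-one correction has spectral norm $O(p\,a_{np}^{1-\alpha})=o(a_{np}^2)$ under \ref{eq:p} with $\beta\le 1$). Let $\hat Z_{it}=Z_{it}-\tilde Z_{it}$ and decompose
\[
\bfZ\bfZ'-\diag(\bfZ\bfZ')=\bigl[\tilde\bfZ\tilde\bfZ'-\diag(\tilde\bfZ\tilde\bfZ')\bigr]+\bigl[\tilde\bfZ\hat\bfZ'+\hat\bfZ\tilde\bfZ'+\hat\bfZ\hat\bfZ'-\diag(\cdots)\bigr]=:\bfB+\mathbf{E}.
\]
The number $N$ of indices $(i,t)$ with $|Z_{it}|>\delta a_{np}$ has $\E N\to\delta^{-\alpha}$, and a Bonferroni bound gives $\P(\text{some row or column contains two exceedances})=O(1/p+1/n)\to 0$. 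On the complementary event $\hat\bfZ$ has at most one nonzero entry per row and per column, so $\hat\bfZ\hat\bfZ'$ is diagonal and $\tilde\bfZ\hat\bfZ'=\sum_{s=1}^N Z_{j_s,t_s}\,\tilde\bfZ_{\cdot,t_s}\,\bfe_{j_s}'$ is an orthogonal (in Frobenius) sum of $N$ rank-one matrices with $\|\tilde\bfZ\hat\bfZ'\|_F^2=\sum_s Z_{j_s,t_s}^2\,\ltwonorm{\tilde\bfZ_{\cdot,t_s}}^2$. Since $|Z_{j_s,t_s}|=O_\P(a_{np})$, $N=O_\P(1)$, and $p\,\E\tilde Z^2=o(a_{np}^2)$ under \ref{eq:p} with $\beta\le 1$ (for $\alpha<2$, $p\,\E\tilde Z^2\asymp p\,a_{np}^{2-\alpha}\sim a_{np}^2/n$; for $\alpha\in[2,4)$, $\E\tilde Z^2\le 1$ and $p/a_{np}^2\to 0$), this gives $\|\mathbf{E}\|_2=o_\P(a_{np}^2)$.

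\textbf{Bulk part.} Since $\bfB$ is symmetric, $\E\|\bfB\|_2^{2k}\le\E\tr(\bfB^{2k})$; expanding,
\[
\E\tr(\bfB^{2k})=\sum_{i_1,\ldots,i_{2k}}\sum_{t_1,\ldots,t_{2k}}\E\prod_{s=1}^{2k}\tilde Z_{i_s,t_s}\tilde Z_{i_{s+1},t_s},
\]
with cyclic indices and $i_s\ne i_{s+1}$ for all $s$ -- the no-backtracking constraint imposed precisely by the removal of the diagonal. Centering of $\tilde Z$, independence of the entries, and the truncation $|\tilde Z|\le\delta a_{np}$ imply that only walks in which every distinct random variable appears with even multiplicity contribute; a standard walk-counting argument then yields $\E\tr(\bfB^{2k})\le C_k\,p\,n^k(\E\tilde Z^2)^{2k}$ plus lower-order terms. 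Dividing by $a_{np}^{4k}$ and inserting the bounds on $\E\tilde Z^2$, this is $o(1)$ once $k=k(\alpha,\beta)$ is chosen large enough, so Markov gives $\|\bfB\|_2=o_\P(a_{np}^2)$. The principal obstacle of the proof is exactly this trace step: for $\alpha$ close to $4$ and $\beta$ close to $1$ the Frobenius bound ($k=1$) is insufficient, and one must carry out the genuine high-moment enumeration, exploiting that the no-backtracking constraint eliminates the "doubled-edge" walks which would otherwise contribute at the same order $a_{np}^2$ as the dominant diagonal terms in $\bfZ\bfZ'$ itself.
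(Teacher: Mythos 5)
Your overall route is the classical truncation-plus-trace (moment method) argument in the style of Soshnikov and Auffinger--Ben Arous--P\'ech\'e, which is genuinely different from the paper's proof: the paper avoids entrywise truncation at level $a_{np}$ altogether, using a direct Frobenius-norm bound with large-deviation and Karamata estimates for $\alpha\in(0,8/3)$, and for $\alpha\in[8/3,4)$ it squares the matrix, decomposes $[\bfZ\bfZ'-\diag(\bfZ\bfZ')]^2=\bfD+\bfF+\bfR$, and only invokes a high-moment trace expansion for the $\bfR$-part. The reduction of part (2) to part (1) by transposition, and your treatment of the exceedance part (at most one exceedance per row and column with probability $1-O(p^{-1}+n^{-1})$, so $\hat\bfZ\hat\bfZ'$ is diagonal and the cross terms are rank-$O_\P(1)$), are sound modulo routine details. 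The strategy is viable in principle and would, if completed, give a proof closer in spirit to the original literature; what the paper's route buys is that the genuinely combinatorial high-moment work is confined to the single term $\widetilde\bfR$ and only for $\alpha\in[16/5,4)$.

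However, the bulk step as written has a genuine gap, and it sits exactly at the point you yourself identify as the principal obstacle. First, the stated dominant bound $\E\tr(\bfB^{2k})\le C_k\,p\,n^k(\E\tilde Z^2)^{2k}$ is not the correct count for the paired (tree) walks: the no-backtracking constraint forces every column vertex of the walk to have degree at least two, which in a tree with $a$ row and $b$ column vertices and $2k$ edges gives $a\ge b+1$, hence $a\ge k+1$, $b\le k$, and the dominant contribution is of order $p^{k+1}n^k(\E\tilde Z^2)^{2k}$ (for $p\le n$), not $p\,n^k(\cdots)$. This still suffices, since $\big(p^{k+1}n^k\big)^{1/(2k)}\E[\tilde Z^2]\to\sqrt{pn}\,\E[\tilde Z^2]=o(a_{np}^2)$ for all $\alpha\in(0,4)$ and $\beta\in[0,1]$, but the bound you assert is not the one the enumeration produces. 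Second, and more seriously, the walks containing edges of multiplicity $m\ge 3$ are dismissed as ``lower-order terms'' handled by ``a standard walk-counting argument.'' For $m>\alpha$ Karamata gives $\E[|\tilde Z|^m]\asymp\delta^{m-\alpha}a_{np}^m\,\P(|Z|>a_{np})\asymp a_{np}^m/(np)$, which is large, and whether the loss of free vertices compensates for this is precisely the delicate point when $\alpha$ is close to $4$ and $\beta$ ranges over $[0,1]$; this trade-off is where the paper's own $\bfR$-part proof does its careful bookkeeping ($|S_l|\ge n|S_{l+1}|$, $na_{np}^{-1}\to\infty$, etc.), and it cannot be waved through. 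Until that enumeration is actually carried out under the general growth condition \ref{eq:p}, the bulk estimate is an assertion rather than a proof. Two smaller points: your trace argument relies on centering of $\tilde Z$, which you only perform for $\alpha\ge2$, so for $\alpha\in(1,2)$ with $\E[Z]\ne0$ (not excluded by the hypotheses) you need either the paper's preliminary recentering of $\bfZ$ or a separate treatment of the unpaired walks; and the recentering correction is not a deterministic rank-one matrix (the cross terms $\tilde\bfZ\,\E[\hat\bfZ]'$ are random), although its spectral norm is indeed negligible.
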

The second part of this theorem follows from the first one by an interchange of $n$ and $p$.
Indeed, if $\beta\ge 1$, we can write $n=p^{1/\beta}\ell(p)$ for some \slvary\
\fct\ $\ell$ and then part (2) follows from part (1).
\begin{remark}
Theorem \ref{prop:offdiagonal} shows that the largest eigenvalues of $\z\z'$ are determined by the largest diagonal entries. 
In the case of heavy-tailed Wigner matrices, however, the diagonal elements do not play any particular role.
\end{remark}

From this theorem one immediately obtains 
a result about the approximation of the eigenvalues
 of $\z\z'$ and $\z'\z$ by those of $\diag(\z\z')$ and $\diag(\z'\z)$, respectively. Indeed, 
for any symmetric $p\times p$ matrices $\bfA,\bfB$, by {\em Weyl's inequality} (see Bhatia \cite{bhatia:1997}),
\beam\label{eq:weylin}
\max_{i=1,\ldots,p}\big|\la_{(i)}(\bfA+\bfB)-\la_{(i)}(\bfA)\big|\le \|\bfB\|_2\,.
\eeam
If we now choose 
$\bfA+\bfB=\z\z'$ and $\bfA= \diag (\z\z')$ (or $\bfA+\bfB=\z'\z$ and $\bfA= \diag (\z'\z)$) we obtain the following result.

\begin{corollary}\label{cor:687}
Assume the conditions of Theorem~\ref{thm:iidmain} on  $\z$ and $(p_n)$.
\begin{enumerate}
\item
If $\beta\in [0,1]$ we have
\beao
a_{np}^{-2}\,\max_{i=1,\ldots,p}\big|\la_{(i)}-\la_{(i)}(\diag(\z\z'))\big|\stp 0\,,\qquad \nto\,.
\eeao
\item
If $\beta> 1$  we have
\beao
a_{np}^{-2}\,
\max_{i=1,\ldots,n}\big|\la_{(i)}-\la_{(i)}(\diag(\z'\z))\big|
\stp 0\,,\qquad\nto \,.
\eeao
\end{enumerate}
\end{corollary}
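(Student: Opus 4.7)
The plan is to obtain this corollary as a direct consequence of Theorem~\ref{prop:offdiagonal} via Weyl's inequality~\eqref{eq:weylin}, as already hinted at in the text preceding the statement. The symmetry of the required matrices and the identification of the nonzero eigenvalues of $\z\z'$ with those of $\z'\z$ make the passage essentially mechanical once Theorem~\ref{prop:offdiagonal} is in hand.

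For part (1), under $\beta \in [0,1]$ I would set $\bfA = \diag(\z\z')$ and $\bfB = \z\z' - \diag(\z\z')$, so that $\bfA + \bfB = \z\z'$. Both $\bfA$ and $\bfA + \bfB$ are symmetric $p\times p$ matrices, so Weyl's inequality applies and yields
\[
a_{np}^{-2}\max_{i=1,\ldots,p}\bigl|\la_{(i)}(\z\z') - \la_{(i)}(\diag(\z\z'))\bigr| \;\le\; a_{np}^{-2}\,\twonorm{\z\z' - \diag(\z\z')}.
\]
By Theorem~\ref{prop:offdiagonal}(1) the right-hand side converges to $0$ in probability, which is exactly the claim of part (1).

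For part (2), with $\beta > 1$, I would first work with the $n\times n$ symmetric matrix $\z'\z$ and apply the same argument, namely Weyl's inequality with $\bfA = \diag(\z'\z)$ and $\bfB = \z'\z - \diag(\z'\z)$. Theorem~\ref{prop:offdiagonal}(2) then gives
\[
a_{np}^{-2}\max_{i=1,\ldots,n}\bigl|\la_{(i)}(\z'\z) - \la_{(i)}(\diag(\z'\z))\bigr| \stp 0.
\]
It remains to translate the left-hand side back to eigenvalues of $\z\z'$. Here I would use the standard fact that $\z\z'$ and $\z'\z$ share the same nonzero eigenvalues; since $\beta > 1$ implies $p > n$ for all sufficiently large $n$, the ordered eigenvalues satisfy $\la_{(i)}(\z\z') = \la_{(i)}(\z'\z)$ for $i=1,\ldots,n$ on this event, and then the conclusion of part (2) follows directly.

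There is no substantive obstacle here — the whole content of the corollary is already present in Theorem~\ref{prop:offdiagonal}, and Weyl's inequality is textbook. The only mild subtlety is the eigenvalue bookkeeping in part (2): one must be careful to note that $\max$ is over $i=1,\ldots,n$ (not $i=1,\ldots,p$), matching the range of nonzero eigenvalues, so that identifying $\la_{(i)}(\z\z')$ with $\la_{(i)}(\z'\z)$ on an event of probability tending to one is legitimate. This is precisely the reason the statement restricts to $i\le n$ in part (2), as already emphasised in the remark following Theorem~\ref{thm:iidmain}.
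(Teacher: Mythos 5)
Your proposal is correct and follows exactly the paper's route: Weyl's inequality \eqref{eq:weylin} applied with $\bfA=\diag(\z\z')$, $\bfB=\z\z'-\diag(\z\z')$ (respectively $\bfA=\diag(\z'\z)$, $\bfB=\z'\z-\diag(\z'\z)$), combined with Theorem~\ref{prop:offdiagonal}. The bookkeeping for part (2) via the shared nonzero eigenvalues of $\z\z'$ and $\z'\z$ is exactly the point made in the remark after Theorem~\ref{thm:iidmain}, so nothing is missing.
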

Now \eqref{eq:rowa} and \eqref{eq:cola} are immediate con\seq s of this corollary. Indeed, we have
$\la_{(i)}(\diag(\z\z'))=D_{(i)}^\rightarrow$ and $\la_{(i)}(\z'\z)=D_{(i)}^\downarrow$, $i=1,\ldots,p\wedge n$.

\subsection{Point process \con }\label{sec:ppconvergence}
In this section we want to illustrate how the approximations from Theorem~\ref{thm:iidmain} can be used
to derive \asy\ theory for the largest eigenvalues of $\z\z'$ via the weak \con\ of suitable \pp es.
The limiting \pp\ involves the points of the Poisson process
\begin{equation}\label{eq:N}
N_\Gamma=\sum_{i=1}^\infty \vep_{\Gamma_i^{-2/\alpha}}\,,\qquad \nto\,,
\end{equation}
where $\vep_y$ is the Dirac measure at $y$, 
\beao
\Gamma_i=E_1+\cdots + E_i\,,\qquad i\ge 1\,,
\eeao
and $(E_i)$ is a sequence of iid standard exponential random variables. In other words, $N_\Gamma$ is a Poisson point process on $(0,\infty)$ with mean \ms\
$\mu(x,\infty)= x^{-\alpha/2}$, $x>0$.
\par
\begin{lemma}\label{lem:pp}
Assume the conditions of Theorem~\ref{prop:offdiagonal} hold. 
\begin{enumerate}
\item
If $\beta\ge 0$, then
\beam\label{eq:immb}
\sum_{i=1}^p \vep_{a_{np}^{-2}(D_i^\rightarrow-c_n)}\cid N_\Gamma\,,\qquad \nto\,,
\eeam  
where $c_n=0$ if $\E[D^\rightarrow]=\infty$ and $c_n=\E[D^\rightarrow]=n\,\E[Z^2]$ otherwise. 
\item
If $\beta\ge 0$, then
\beam\label{eq:imma}
\sum_{i=1}^p \vep_{a_{np}^{-2}Z_{(i),np}^2}\cid N_\Gamma\,,\qquad \nto\,,
\eeam  
\end{enumerate}
The weak \con\ of the \pp es holds in the space of 
point \ms s with state space $(0,\infty)$ equipped with the vague topology; see Resnick \cite{resnick:2007}.
\end{lemma}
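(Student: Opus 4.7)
The strategy is to derive both convergences from the classical criterion for point processes of iid points: if $(\xi_{n,i})_{i=1,\ldots,k_n}$ is an iid array and $k_n\,\P(a_{np}^{-2}\xi_{n,1}\in\cdot)\civ\mu$ vaguely on $(0,\infty)$, then $\sum_{i=1}^{k_n}\vep_{a_{np}^{-2}\xi_{n,i}}\cid\mathrm{PRM}(\mu)$; see Resnick~\cite{resnick:2007}, Proposition~3.21. In both parts the target intensity is $\mu(x,\infty)=x^{-\alpha/2}$, so it suffices to verify the one-dimensional tail asymptotic and then invoke the criterion.

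For part~(2), the array $(Z_{it}^2)_{i\le p,\,t\le n}$ is iid and \regvary\ of index $\alpha/2$, since $|Z|$ is \regvary\ of index $\alpha$. The definition of $a_{np}$ yields $np\,\P(Z^2>x a_{np}^2)=np\,\P(|Z|>\sqrt{x}\,a_{np})\to x^{-\alpha/2}$ for every $x>0$, and the criterion gives $\sum_{i,t}\vep_{a_{np}^{-2}Z_{it}^2}\cid N_\Gamma$. The process in \eqref{eq:imma} keeps only the top $p$ atoms; because $p\to\infty$ and $N_\Gamma$ has finitely many atoms in each set $(\varepsilon,\infty)$, the two processes coincide on compact subsets of $(0,\infty)$ with \pro y tending to one, so they share the same vague limit.

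For part~(1), the sums $D_i^\rightarrow=\sum_{t=1}^n Z_{it}^2$ are iid, so the criterion reduces to establishing $p\,\P(D^\rightarrow-c_n>x a_{np}^2)\to x^{-\alpha/2}$ for every $x>0$. This is a Nagaev-type heavy-tail \ld\ statement for sums of iid \regvary\ summands of index $\alpha/2\in(0,2)$. For $\alpha\in(0,2)$, where $\E Z^2=\infty$ and $c_n=0$, the classical subexponential \ld\ principle gives $\P(D^\rightarrow>y)\sim n\,\P(Z^2>y)$ uniformly for $y\gg n^{2/\alpha}$. For $\alpha\in[2,4)$, where $\E Z^2<\infty$ and $c_n=n\,\E Z^2$, Nagaev's theorem yields $\P(D^\rightarrow-c_n>y)\sim n\,\P(Z^2>y)$ uniformly for $y\gg\sqrt{n\log n}$. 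Substituting $y=x a_{np}^2$ and combining with the \regvar\ asymptotic already used in part~(2) produces the required marginal.

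The one step that genuinely requires care is checking that the threshold $a_{np}^2$ lies inside the appropriate \ld\ regime for \emph{every} $\beta\ge 0$ satisfying \ref{eq:p}. Writing $a_{np}^2=(np)^{2/\alpha}\tilde\ell(np)$ with $\tilde\ell$ \slvary, and controlling the factor $\ell(n)$ in \ref{eq:p} by the Potter bounds, the required inequalities $a_{np}^2/n^{2/\alpha}\to\infty$ for $\alpha<2$ and $a_{np}^2/\sqrt{n\log n}\to\infty$ for $\alpha\in[2,4)$ reduce, respectively, to $p^{2/\alpha}\tilde\ell(np)\to\infty$, which is immediate from $p\to\infty$, and to $n^{2/\alpha-1/2}\to\infty$, which holds precisely because $\alpha<4$. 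This threshold verification is the main obstacle; the remainder of the argument is bookkeeping via standard references on heavy-tailed sums and iid \pp\ \con.
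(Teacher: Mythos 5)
Your proposal is correct and follows essentially the same route as the paper: part (1) is exactly the content of the paper's Lemma~\ref{lem:ppr} (Resnick's Proposition~3.21 combined with the Nagaev--Cline--Hsing large deviation asymptotics of Theorem~\ref{thm:nagaev}, applied to the summands $Z^2$, resp.\ $Z^2-\E[Z^2]$, which have tail index $\alpha/2$), and part (2) is the paper's argument that the full array point process $\sum_{i,t}\vep_{a_{np}^{-2}Z_{it}^2}$ converges to $N_\Gamma$ and that discarding all but the top $p$ atoms is asymptotically immaterial because $a_{np}^{-2}Z_{(p),np}^2\stp 0$. One small correction to your "step that genuinely requires care": since $Z^2$ has tail index $\alpha/2<2$, and hence infinite variance, for \emph{every} $\alpha\in(0,4)$, the uniform large-deviation region for $\alpha\in[2,4)$ is not $y\gg\sqrt{n\log n}$ (that is the finite-variance regime) but $y/a_n^2\to\infty$, where $a_n^2$ is the normalizing sequence for $Z^2$; your evaluation point $y=xa_{np}^2$ satisfies this because $a_{np}^2/a_n^2\to\infty$ as soon as $p\to\infty$, so the conclusion is unaffected and the threshold verification is in fact simpler than the one you carried out.
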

\begin{remark} \rm
Similar results were used in the proofs of Davis et al.~\cite{davis:mikosch:heiny:xie:2015,davis:mikosch:pfaffel:2015}.
We also mention that the centering $c_n$ in the finite variance case can be avoided if $n/a_{np}^2\to 0$. 
The latter condition is satisfied if $\beta>\alpha/2-1$.
\end{remark}
\begin{proof} 
Part (1) follows from Lemma~\ref{lem:ppr}. As regards part (2), we observe that
\beam\label{eq:gp}
\sum_{i=1}^p\sum_{t=1}^n \vep_{a_{np}^{-2} Z_{it}^2}\std N_\Gamma\,;
\eeam
see e.g. Resnick~\cite{resnick:1987}, Proposition 3.21. On the other hand, $a_{np}^{-2} Z_{(p),np}^2\stp 0$ which together with
\eqref{eq:gp} yields part (2).
\end{proof}

Theorem~\ref{thm:iidmain}  and arguments similar to the proofs in Davis et al.~\cite{davis:mikosch:heiny:xie:2015,davis:mikosch:pfaffel:2015}
enable one to derive the weak \con\ of
the point processes of the normalized eigenvalues. 
\begin{theorem}\label{cor:1}
Assume the conditions of Theorem~\ref{thm:iidmain}. If
$\min(\beta,\beta^{-1} ) \in ((\alpha/2-1)_+,1]$ 
then
\beam\label{eq:imm}
\sum_{i=1}^p \vep_{a_{np}^{-2}\la_i}\std N_\Gamma\,,
\eeam 
in the space of point measures 
with state space $(0,\infty)$ equipped with the vague topology. 
\end{theorem}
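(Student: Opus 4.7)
The plan is to combine the approximation result of Theorem~\ref{thm:iidmain}(3) with the point process convergence of Lemma~\ref{lem:pp}(2). The assumption $\min(\beta,\beta^{-1}) \in ((\alpha/2-1)_+, 1]$ is precisely the hypothesis of Theorem~\ref{thm:iidmain}(3), which supplies the uniform comparison $a_{np}^{-2} \max_{i\le p} |\la_{(i)} - Z^2_{(i),np}| \stp 0$. Simultaneously Lemma~\ref{lem:pp}(2) provides the target convergence for the surrogate process $\tilde N_n := \sum_{i=1}^p \vep_{a_{np}^{-2} Z^2_{(i),np}} \cid N_\Gamma$. The task therefore reduces to showing that the original process $N_n := \sum_{i=1}^p \vep_{a_{np}^{-2}\la_i}$ and $\tilde N_n$ are arbitrarily close in the vague topology with high probability.

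Concretely, vague convergence on $(0,\infty)$ reduces by standard theory (see Resnick~\cite{resnick:2007}) to showing $N_n(f) \cid N_\Gamma(f)$ for every non-negative continuous $f$ with compact support in $(0,\infty)$. Since $\tilde N_n(f) \cid N_\Gamma(f)$, it suffices to establish $|N_n(f) - \tilde N_n(f)| \stp 0$. Fix $f$ with $\mathrm{supp}(f) \subset [\delta, M]$ for some $0<\delta<M<\infty$, and for $\eta \in (0,\delta/2)$ let $A_n^\eta$ be the event on which $a_{np}^{-2} \max_{i\le p}|\la_{(i)} - Z^2_{(i),np}| \le \eta$; by Theorem~\ref{thm:iidmain}(3), $\P(A_n^\eta) \to 1$. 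On $A_n^\eta$, any index $i$ with either $a_{np}^{-2}\la_{(i)}$ or $a_{np}^{-2}Z^2_{(i),np}$ in $[\delta,M]$ must have $a_{np}^{-2}Z^2_{(i),np} \in [\delta/2, 2M]$. The number $K_n$ of such indices equals $\tilde N_n([\delta/2,2M])$, which is stochastically bounded because $\tilde N_n \cid N_\Gamma$ with $\mu(\{\delta/2,2M\})=0$. Writing $\omega_f(\cdot)$ for the modulus of continuity of $f$, we then bound
\begin{equation*}
|N_n(f) - \tilde N_n(f)| \le \sum_{i=1}^p \big|f(a_{np}^{-2}\la_{(i)}) - f(a_{np}^{-2}Z^2_{(i),np})\big| \le K_n \, \omega_f(\eta)
\end{equation*}
on $A_n^\eta$. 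Letting first $n\to\infty$ and then $\eta \to 0$ yields the desired probability convergence.

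The heavy lifting is entirely absorbed into Theorem~\ref{thm:iidmain}(3); the remainder is a routine Slutsky-type bootstrapping for point processes. The only genuine subtlety to verify is that the indices contributing to the discrepancy $|N_n(f)-\tilde N_n(f)|$ are confined to a compact subinterval of $(0,\infty)$, which uses both the strict positivity of $\delta$ and the uniform approximation rate $\eta < \delta/2$; without this, the contributing population could fail to be stochastically bounded and uniform continuity would not suffice. Note also that we may freely pass between ordered and unordered indexings of $(\la_i)$ and $(Z^2_{it})$, since the point measures depend only on the multiset of points, while the pairing used inside the bound is justified by ordering both sequences decreasingly as in \eqref{eq:order} and \eqref{eq:zorder}.
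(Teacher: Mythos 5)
Your proposal is correct and follows exactly the paper's primary route: combining the uniform approximation \eqref{eq:eigena} of Theorem~\ref{thm:iidmain}(3) with the point process convergence \eqref{eq:imma} of Lemma~\ref{lem:pp}(2). The paper states this combination in one line; you have simply filled in the standard Slutsky-type argument (testing against $f\in C_K^+(0,\infty)$ and controlling the stochastically bounded number of points near the support) that the paper leaves implicit.
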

\begin{proof}
The limit relation \eqref{eq:imm} follows from \eqref{eq:imma} in combination with
\eqref{eq:eigena}. Alternatively, one can exploit \eqref{eq:immb} both for $(D_i^\rightarrow)$ and 
$(D_t^\downarrow)$ (notice that the \pp\ \con\ for the latter \seq\ follows by interchanging the roles of $n$ and $p$), the fact that
$(n\vee p)/a_{np}^2\to 0$ if $\min(\beta,\beta^{-1})\in ((\alpha/2-1)_+,1]$ (hence centering of the points $(D_i^\rightarrow)$ and  $(D_t^\downarrow)$ in \eqref{eq:immb}
can be avoided for $\E[Z^2]<\infty$) and finally using the approximations \eqref{eq:rowa} or \eqref{eq:cola}.
\end{proof}
The weak \con\ of the \pp es of the normalized eigenvalues of $\bfZ\bfZ'$ 
in Theorem~\ref{cor:1} allows one to use the conventional tools in this
field; see Resnick~\cite{resnick:2007,resnick:1987}. An immediate con\seq\ is
\beam \label{eq:helpgamma1}
a_{np}^{-2}\big(\la_{(1)},\ldots,\la_{(k)}\big)\std \big(\Gamma_1^{-2/\alpha},\ldots,\Gamma_k^{-2/\alpha}\big)
\eeam
for any fixed $k\ge 1$. Using the methods of Davis et al.~\cite{davis:mikosch:heiny:xie:2015} shows for $\alpha \in (2,4)$
\beam \label{eq:helpgamma2}
a_{np}^{-2}\big(\la_{(1)}-(p\vee n) \E[Z^2],\ldots,\la_{(k)}-(p\vee n) \E[Z^2]\big)\std \big(\Gamma_1^{-2/\alpha},\ldots,\Gamma_k^{-2/\alpha}\big)\,.
\eeam
Equations \eqref{eq:helpgamma1} and \eqref{eq:helpgamma2} yield that for $\alpha \in (0,4)$ and any fixed $k\ge 1$,
\beam \label{eq:helpgamma3}
a_{np}^{-2}\big(\la_{(1)}- \la_{(2)},\ldots, \la_{(k)}- \la_{(k+1)}\big)\std \big(\Gamma_1^{-2/\alpha} - \Gamma_{2}^{-2/\alpha},\ldots,\Gamma_k^{-2/\alpha} - \Gamma_{k+1}^{-2/\alpha} \big)\,.
\eeam
Related results can also be derived for an increasing number of order statistics, e.g.~the joint \con\ of the largest eigenvalue $a_{np}^{-2}\la_{(1)}$ and the trace $a_{np}^{-2}(\la_1+\cdots+\la_p)$. In particular, one obtains for
$\alpha\in (0,2)$ under the conditions of Theorem~\ref{cor:1} that
\beao
\dfrac{\la_{(1)}}{\la_1+\cdots +\la_p} \std \dfrac{\Gamma_1^{-2/\alpha}}{\Gamma_1^{-2/\alpha}+\Gamma_2^{-2/\alpha}+\cdots}\,.
\eeao 
We refer to Davis et al.~\cite{davis:mikosch:pfaffel:2015} for details on the proofs and more examples.

In the next subsection we will show how the above results on the joint convergence of eigenvalues can be applied to approximate the eigenvectors of $\z\z'$.

\subsection{Eigenvectors}\label{sec:eigenvectors}
In this section we assume the conditions of Theorem~\ref{prop:offdiagonal} and $\beta \in [0,1]$.
From Theorem~\ref{prop:offdiagonal}(1) 
we know that $\z\z'$ is approximated in spectral norm by $\diag(\z\z')$.
The unit eigenvectors 
of a $p\times p$ diagonal matrix are the canonical basis vectors $\bfe_j\in\R^p$, $j=1,\dots,p$. 
This raises the question as to whether $(\bfe_j)$ are good approximations of the eigenvectors $(\bfv_j)$ of $\z\z'$.
By $\bfv_{{j}}$ we denote the unit eigenvector associated with the $j$th largest eigenvalue $\la_{(j)}$. The unit eigenvector associated with the $j$th largest eigenvalue of $\diag(\z\z')$ is $\bfe_{L_j}$, where $L_j$ is defined in \eqref{eq:help6}. Our guess that $\bfv_j$ is approximated by $\bfe_{L_j}$ is confirmed by the following result.

\begin{theorem}\label{thm:eigenvec} Assume the conditions of Theorem~\ref{thm:iidmain} and let
$\beta\in [0,1]$. Then for any fixed $k\ge 1$,
\beao
\ltwonorm{\bfv_k - \bfe_{L_k}} \cip 0\,, \quad \nto \,.
\eeao
\end{theorem}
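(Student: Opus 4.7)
The plan is to combine two ingredients: (i) the spectral-norm approximation of $\bfZ\bfZ'$ by $\diag(\bfZ\bfZ')$ from Theorem~\ref{prop:offdiagonal}(1), and (ii) a Davis--Kahan-type perturbation inequality for eigenvectors whose denominator (an eigenvalue gap) is controlled via the \pp\ convergence in Lemma~\ref{lem:pp}(1). The point of working in spectral norm is precisely to make eigenvector perturbation arguments available.

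Step 1 (perturbation bound). Write $\bfZ\bfZ' = \diag(\bfZ\bfZ') + \bfE$, with $\bfE$ symmetric and with zero diagonal. The matrix $\diag(\bfZ\bfZ')$ has unit eigenvectors $\bfe_1,\ldots,\bfe_p$ with eigenvalues $D_1^\rightarrow,\ldots,D_p^\rightarrow$, so that $\bfe_{L_k}$ corresponds to its $k$th largest eigenvalue $D_{(k)}^\rightarrow$. The Davis--Kahan $\sin\Theta$ theorem (see, e.g., Bhatia~\cite{bhatia:1997}) gives, after possibly flipping the sign of $\bfv_k$,
\beao
\ltwonorm{\bfv_k - \bfe_{L_k}} \le \frac{\sqrt{2}\,\twonorm{\bfE}}{\delta_k}\,,\qquad \delta_k := \min\bigl(D_{(k-1)}^\rightarrow - D_{(k)}^\rightarrow,\, D_{(k)}^\rightarrow - D_{(k+1)}^\rightarrow\bigr)\,,
\eeao
with the convention $D_{(0)}^\rightarrow := +\infty$ when $k=1$.

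Step 2 (controlling the numerator and the gap). By Theorem~\ref{prop:offdiagonal}(1) we immediately get $a_{np}^{-2}\twonorm{\bfE}\stp 0$. For the gap, Lemma~\ref{lem:pp}(1) gives the weak \con\ $\sum_{i=1}^p \vep_{a_{np}^{-2}(D_i^\rightarrow-c_n)}\cid N_\Gamma$ on $(0,\infty)$. Because the same centering $c_n$ cancels in differences of order statistics, standard \pp\ arguments (continuous mapping for the projection onto the top $k+1$ points) yield
\beao
a_{np}^{-2}\bigl(D_{(1)}^\rightarrow-c_n,\ldots,D_{(k+1)}^\rightarrow-c_n\bigr) \cid \bigl(\Gamma_1^{-2/\alpha},\ldots,\Gamma_{k+1}^{-2/\alpha}\bigr)\,,
\eeao
and hence $a_{np}^{-2}\delta_k \cid \min\bigl(\Gamma_{k-1}^{-2/\alpha}-\Gamma_k^{-2/\alpha},\,\Gamma_k^{-2/\alpha}-\Gamma_{k+1}^{-2/\alpha}\bigr)$, which is a.s.~strictly positive since the points of $N_\Gamma$ are a.s.~simple. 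In particular, for every $\vep>0$ there exist $\eta>0$ and $n_0$ such that $\P(a_{np}^{-2}\delta_k < \eta) < \vep$ for all $n\ge n_0$.

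Step 3 (conclusion). Combining the last two displays,
\beao
\ltwonorm{\bfv_k - \bfe_{L_k}} \le \sqrt 2\,\frac{a_{np}^{-2}\twonorm{\bfE}}{a_{np}^{-2}\delta_k}\,,
\eeao
and the \rhs\ tends to $0$ in \pro y by writing $\P(\ltwonorm{\bfv_k-\bfe_{L_k}}>\vep)$ as a sum over the events $\{a_{np}^{-2}\delta_k\ge \eta\}$ and its complement and letting first $n\to\infty$ then $\eta\downarrow 0$.

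The main technical point is the lower bound on the random gap $\delta_k$: the numerator and denominator both vanish at rate $a_{np}^2$, so the argument relies crucially on the \emph{joint} non-degeneracy of the top order statistics of $(D_i^\rightarrow)$, which is exactly what Lemma~\ref{lem:pp}(1) supplies. A minor bookkeeping issue is the sign ambiguity of unit eigenvectors, which is absorbed by the Davis--Kahan bound (the statement is thus to be read with the sign of $\bfv_k$ chosen so that $\langle \bfv_k,\bfe_{L_k}\rangle\ge 0$).
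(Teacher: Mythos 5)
Your argument is correct and has the same overall architecture as the paper's proof: reduce $\bfZ\bfZ'$ to $\diag(\bfZ\bfZ')$ in spectral norm via Theorem~\ref{prop:offdiagonal}(1), then convert this into an eigenvector statement through a perturbation inequality whose denominator --- a spectral gap of order $a_{np}^{2}$ --- is shown to be nondegenerate by point process convergence. The packaging differs in two respects. First, the paper does not use Davis--Kahan; it writes $a_{np}^{-2}\bfZ\bfZ'\bfe_{L_k}=a_{np}^{-2}\Dr_{L_k}\bfe_{L_k}+\vep^{(n)}\bfw^{(n)}$ and applies the Benaych-Georges--P\'ech\'e perturbation lemma (Proposition~\ref{prop:perturbation}), measuring the gap among the eigenvalues $\la_{(i)}$ themselves via the spacing limit \eqref{eq:helpgamma3}. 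Your choice to measure the gap among the $\Dr_{(i)}$ directly from Lemma~\ref{lem:pp}(1) is, if anything, cleaner: the common centering $c_n$ cancels in the spacings, so the gap limit is available for all $\beta\ge 0$, whereas \eqref{eq:helpgamma3} is derived from Theorem~\ref{cor:1} under the additional restriction $\min(\beta,\beta^{-1})\in((\alpha/2-1)_+,1]$; your route sidesteps having to relate the eigenvalue spacings back to the $\Dr$ spacings. Two minor points: the ``one-matrix-gap'' form of Davis--Kahan you quote is the Yu--Wang--Samworth variant rather than anything literally in Bhatia --- if you want to stay with the classical $\sin\Theta$ theorem you should insert one application of Weyl's inequality \eqref{eq:weylin} to locate $\la_{(k)}$ relative to the spectrum of $\diag(\bfZ\bfZ')$, which costs nothing since $\twonorm{\bfE}/\delta_k\stp 0$; and your sign normalization of $\bfv_k$ is exactly what the paper encodes through the projection $\bfP_{\bfe_{L_k}}(\bfv_k)$ before upgrading to the full vector difference. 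Neither point affects the validity of your proof.
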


Indeed, $\bfv_j$ and $\bfe_{L_j}$ share another property: they are {\em localized} which means that they are concentrated only in a few components. Vectors which are not localized are called {\em delocalized}.
Figure~\ref{fig:eigenvectors} shows the outcome of a simulation example in which we visualize 
the components of the unit eigenvector associated with the largest eigenvalue of $\z\z'$ 
for a simulated data matrix $\z$ with iid Pareto(0.8) entries. In the right graph
we see that only one of the $p=200$ components is significant. 
Hence we can find a canonical basis vector $\bfe_{k}$ such that $\ltwonorm{\bfe_{k}-\bfv_{1}}$ is small. 
Therefore the eigenvector is localized. This is in stark contrast to the case of iid standard normal entries; see the left graph.
Then many components are of similar magnitude, hence the eigenvector is delocalized. 
Typically, the eigenvectors tend to be localized when the entry distribution has an infinite fourth moment, 
while they tend to be delocalized  otherwise; see Benaych-Georges and P\'{e}ch\'{e} \cite{benaych:peche} for the case of Wigner matrices. 
\begin{figure}[htb!]
  \centering
  \subfigure
{
    \includegraphics[scale=0.4]{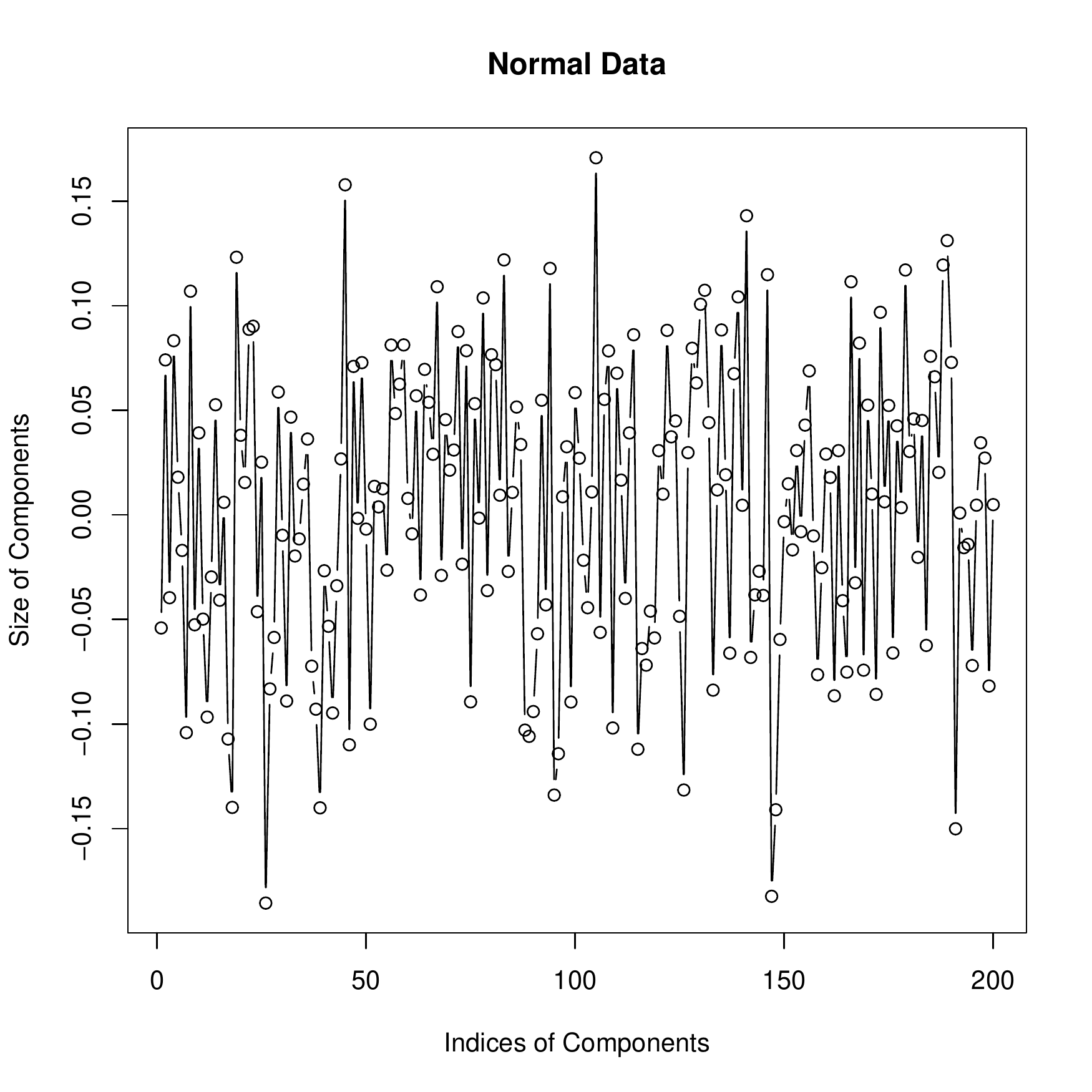}
  }
{
    \includegraphics[scale=0.4]{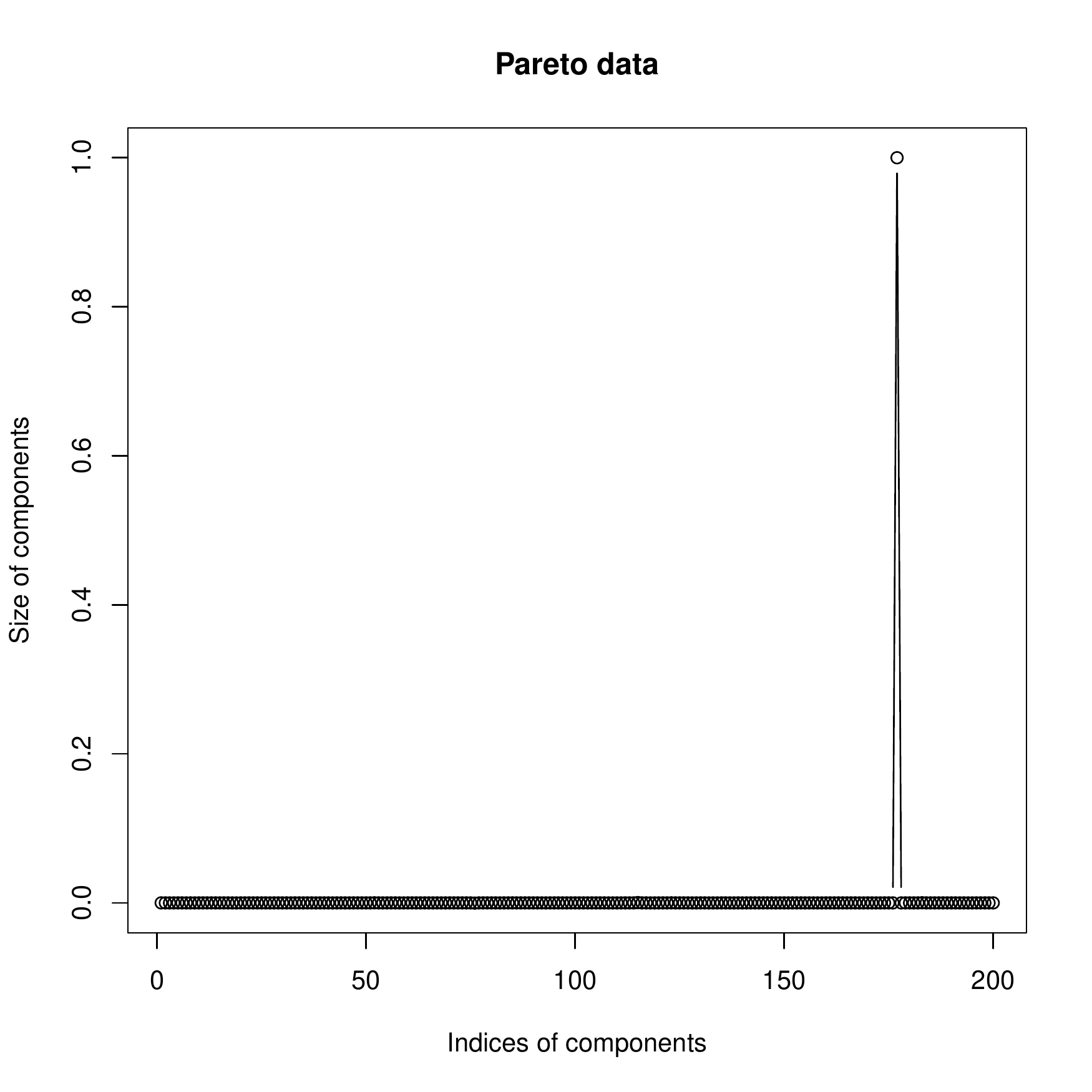}
  }
  \caption{The components of the eigenvector $\bfv_1$. Right: The case of iid Pareto(0.8) entries. Left: The case
of iid standard normal entries. 
We choose $p=200$ and $n=1,000$.
}
  \label{fig:eigenvectors}
\end{figure}


\begin{proof}[Proof of Theorem~\ref{thm:eigenvec}]
Fix $k\ge 1$. Since $p\to \infty$ we can assume $k\le p$ for sufficiently large $n$.
We observe that
\begin{equation*}
\z\z' \,\bfe_j -\Dr_j \,\bfe_j = \Big( \sum_{t=1}^n Z_{1t}Z_{jt}, \ldots,\sum_{t=1}^n Z_{j-1,t}Z_{jt}, 0,  \sum_{t=1}^n Z_{j+1,t}Z_{jt}, \ldots,\sum_{t=1}^n Z_{pt}Z_{jt} \Big)'\,, \quad j=1,\ldots,p\,,
\end{equation*}
are the columns of $\z\z' -\diag(\z\z')$. By Theorem~\ref{prop:offdiagonal}(1),
\beam \label{eq:colas}
a_{np}^{-2} \max_{j=1,\ldots,p} \ltwonorm{\z\z' \bfe_j -\Dr_j \bfe_j}
 \le a_{np}^{-2} \twonorm{\bfZ \bfZ' - \diag(\bfZ \bfZ')} \cip 0\,,\qquad\nto\,.
\eeam

If we set $\bfH^{(n)}=a_{np}^{-2}\z\z'$, $\bfv^{(n)}=\bfe_{L_k} \in \R^p$ and $\lambda^{(n)}= a_{np}^{-2} \Dr_{L_k}$, we see that
\begin{equation*}
a_{np}^{-2}\z\z' \bfe_{L_k} = a_{np}^{-2} \Dr_{L_k} \bfe_{L_k} + \vep^{(n)} \bfw^{(n)},
\end{equation*}
 where $\bfw^{(n)}= \ltwonorm{\z\z' \bfe_{L_k} -\Dr_{L_k} \bfe_{L_k}}^{-1} (\z\z' \bfe_{L_k} -\Dr_{L_k} \bfe_{L_k})$ is a unit vector and $\vep^{(n)}= a_{np}^{-2} \ltwonorm{\z\z' \bfe_{L_k} -\Dr_{L_k} \bfe_{L_k}} \cip 0$ by \eqref{eq:colas}. 

Before we can apply Proposition~\ref{prop:perturbation} we need to show that with probability converging to $1$, there are no other eigenvalues in a suitably small interval around $\lambda_{(k)}$. Let $s>1$. We define the set
\begin{equation*}
\Omega_n = \Omega_n(k,s)= \{a_{np}^{-2} |\la_{(k)}-\la_{(i)}|>s\, \vep^{(n)}\, :\, i\neq k =1,\ldots,p  \}\,.
\end{equation*} 
From \eqref{eq:colas} we get $s\, \vep^{(n)}\to 0$. Then using this and \eqref{eq:helpgamma3}, we obtain
\begin{equation*}
\begin{split}
\lim_{\nto} \P(\Omega_n^c) &= \lim_{\nto} \P( a_{np}^{-2} \min\{ \la_{(k-1)}-\la_{(k)}, \la_{(k)}-\la_{(k+1)}\} \le s\, \vep^{(n)}) =0
\end{split}
\end{equation*}

By Proposition~\ref{prop:perturbation} the unit eigenvector $\bfv_k$ associated with $\la_{(k)}$ and the projected vector $\bfP_{\bfe_{L_k}}(\bfv_{k})=(\bfv_{k})_{L_k}\bfe_{L_k}$ satisfy for fixed $\delta>0$:
\begin{equation*}
\begin{split}
\limsup_{\nto} \P(\ltwonorm{\bfv_k-(\bfv_{k})_{L_k}\bfe_{L_k}}>\delta) &
\le \limsup_{\nto} \P(\{\ltwonorm{\bfv_k-(\bfv_{k})_{L_k}\bfe_{L_k}}>\delta\} \cap \Omega_n)+ \limsup_{\nto}\P(\Omega_n^c)\\
&\le \limsup_{\nto} \P(\{2\vep^{(n)}/(s\,\vep^{(n)}-\vep^{(n)})>\delta\} \cap \Omega_n)\\
&\le \limsup_{\nto} \P(\{2/(s-1)>\delta\})=  \1_{\{ 2/(s-1)>\delta\}}.
\end{split}
\end{equation*}
The right-hand side is zero for sufficiently large $s$. Since both $\bfv_k$ and $\bfe_{L_k}$ are unit vectors this means that
\beao
\ltwonorm{\bfv_k - \bfe_{L_k}} \cip 0\,, \quad \nto \,.
\eeao
This proves our result on eigenvectors.
\end{proof}

\section{Proof of Theorem~\ref{thm:iidmain} }\label{sec:proofs}
{\em In what follows, $c$ stands for any constant whose value is not of interest.}
We write $(Z_t)$ for an iid \seq\ with the same \ds\ as $Z$.
\par
The plan of the proof is as follows:
\begin{enumerate}
\item
We prove  Theorem~\ref{prop:offdiagonal} which implies \eqref{eq:rowa} and \eqref{eq:cola}; see Corollary~\ref{cor:687}.
In view of the arguments after Theorem~\ref{thm:iidmain} it suffices to consider only the case $\beta\in [0,1]$.
\item
We prove \eqref{eq:eigena}.
\end{enumerate} 
\subsection{Proof of Theorem~\ref{prop:offdiagonal}}
We proceed in several steps.
\begin{proof}[The case $\alpha \in (0,8/3)$]
If $\alpha\in [1,2)$ and $\E[|Z|]< \infty$, we have
\begin{equation*}
a_{np}^{-1} \twonorm{\bfZ - (\bfZ -\E[\bfZ])}= |\E[Z]| \frac{\sqrt{np}}{a_{np}}\to 0\,, \quad \nto.
\end{equation*}
Therefore, without loss of generality $\E[Z]$ can be assumed $0$ in this case.

From now on we assume $\E[Z]=0$ whenever $\E[|Z|]$ exists. Since the Frobenius norm $\frobnorm{\cdot}$ is an upper bound of the spectral norm we have
\beao
\twonorm{\bfZ \bfZ' - \diag(\bfZ \bfZ')}^2&\le& \frobnorm{\bfZ \bfZ' - \diag(\bfZ \bfZ')}^2\\
&=&\sum_{i, j =1; i\neq j}^p   \sum_{t=1}^n Z_{it}^2 Z_{jt}^2+\sum_{i, j =1; i\neq j}^p   \sum_{t_1,t_2=1; t_1\neq t_2}^n Z_{i,t_1} Z_{j,t_1}Z_{i,t_2} Z_{j,t_2}\\
&=&\sum_{i, j =1; i\neq j}^p   \sum_{t=1}^n Z_{it}^2 Z_{jt}^2 \big[\1_{\{ Z_{it}^2 Z_{jt}^2  > a_{np}^4 \}}+\1_{\{ Z_{it}^2 Z_{jt}^2  \le a_{np}^4 \}}\big]
+I_2^{(n)}\\
&=&I_{11}^{(n)}+I_{12}^{(n)}+I_2^{(n)}\,.
\eeao
Thus it suffices to show that each of the expressions on the \rhs\ when normalized with $a_{np}^4$ converges to zero in \pro y.
We have for any $\epsilon>0$,
\begin{equation*}
\P \big( I_{11}^{(n)}>\epsilon\,a_{np}^4\big)\le   p^2\,n\, \P(Z_{1}^2 Z_{2}^2 > a_{np}^4) \to 0\,.
\end{equation*}
Here we also used the fact that $Z_1Z_2$ is regularly varying with index $\alpha$; see Embrechts and Goldie \cite{embrechts:goldie:1980}. 
An application of Markov's inequality and Lyapunov's moment inequality with $\gamma\in (\alpha/2, 4/3)$ if $\alpha \in [2,8/3)$ and $\gamma=1$ otherwise shows that 
\begin{equation*}
\P\big( I_{12}^{(n)}>\epsilon\,a_{np}^4\big)\le
c\, \frac{p^2n}{a_{np}^4} \Big(\E[|Z_{1} Z_{2}|^{2\gamma}\1_{\{ |Z_{1} Z_{2}| \le a_{np}^2 \}}]\Big)^{\frac{1}{\gamma}}\le 
c\, p^{2-\frac{2}{\gamma}}\,n^{1-\frac{2}{\gamma}+\delta}\to 0,
\end{equation*} 
where we used Karamata's theorem (see Bingham et al. \cite{bingham:goldie:teugels:1987}), and the constant $\delta>0$ can be chosen arbitrarily small due to the Potter bounds.
\par
In the case $\alpha \in (0,2)$ the probability $P_{2}^{(n)}=\P(I_2^{(n)}>\epsilon\,a_{np}^4)$ can be handled analogously. Next, we turn to $P_{2}^{(n)}$ in the case $\alpha \in (2,8/3)$. In particular, $\E [Z^2]<\infty$. With \v Cebychev's inequality, also using the fact that $\E [Z]=0$,  we find that
\begin{equation}\label{eq:sdvfg}
P_{2}^{(n)} \le c \frac{1}{a_{np}^8} \E\Big[ \Big( \sum_{i, j =1; i\neq j}^p   \sum_{t_1,t_2=1; t_1\neq t_2}^n Z_{i,t_1} Z_{j,t_1}Z_{i,t_2} Z_{j,t_2}\Big)^2 \Big]\le c\,\frac{(p\,n)^2}{a_{np}^8} \to 0.
\end{equation}
The case $\alpha=2$ is
most difficult because the second moment of $Z$ can be infinite. 
Without loss of generality we assume that $Z$ is continuous. Otherwise, we add independent centered normal random variables 
to each of the entries $Z_{it}$; due the normalization $a_{np}^2$ 
the \asy\ properties of the eigenvalues remain the same, i.e., the added normal components are \asy ally negligible.  
In view of Hult and Samorodnitsky \cite[Lemma~$4.2$]{hult:2008} there exist constants $C,K>0$ and  a 
function $h:[K,\infty) \to (0,\infty)$ such that 
\begin{equation}\label{eq:fcth}
\E[Z \1_{\{ -h(x) \le Z \le x \}}]=0 \quad \mbox{and} \quad C^{-1} \le \frac{h(x)}{x} \le C
\end{equation}
for all $x\ge K$.\footnote{Here we assume that $p_+\,p_->0$. If either $p_+=0$ or $p_-=0$ one can proceed 
in a similar way by modifying $h$ slightly; we omit details.} We have 
\beao
I_2^{(n)}&=&
\sum_{i, j =1; i\neq j}^p   \sum_{t_1,t_2=1; t_1\neq t_2}^n Z_{i,t_1} Z_{j,t_1}Z_{i,t_2} Z_{j,t_2}\,
\big[\1_{A_{i,j,t_1,t_2}^c}+\1_{A_{i,j,t_1,t_2}}\big]=I_{21}^{(n)}+I_{22}^{(n)}\,,
\eeao
where $A_{i,j,t_1,t_2}=\{ -h(a_{np}^4) \le Z_{i,t_1} , Z_{j,t_1},Z_{i,t_2} ,Z_{j,t_2}\le a_{np}^4 \}$. We see that
\beao
\P(I_{21}^{(n)}>\epsilon\,a_{np}^4)&\le& (p\,n)^2\, \P(A_{i,j,t_1,t_2}^c)\le c\, (p\,n)^2\, \P(|Z| >\min(h(a_{np}^4), a_{np}^4))\\
&\le& c\, (pn)^2\, \P(|Z| > \min(C,C^{-1}) \,a_{np}^4)\le c\, (np)^{-2+\delta}\to 0,
\eeao
where we used the second formula in \eqref{eq:fcth}. The small constant $\delta>0$  comes from a Potter bound argument.
Finally, using the first condition in \eqref{eq:fcth}, we may  conclude similarly to \eqref{eq:sdvfg} that 
\begin{equation*}
P_{22}^{(n)}=\P(I_{22}^{(n)}>\epsilon\,a_{np}^4) \le c\, \frac{(pn)^2}{a_{np}^8}\, \big(\E[Z^2 \1_{\{ -h(a_{np}^4) \le Z \le a_{np}^4 \}}]\big)^4.
\end{equation*}
Since 
\begin{equation*}
\E[Z^2 \1_{\{ |Z| \le \max(C,C^{-1}) x \}}] \ge \E[Z^2 \1_{\{ -h(x) \le Z \le x \}}]\,,
\end{equation*}
and the \lhs\
is slowly varying (see \cite{feller}), we have $P_{22}^{(n)}\to 0$. The proof is complete for $\alpha\in (0,8/3)$. 
\end{proof}

\subsubsection*{The case $\alpha \in [8/3,4)$}
Before we can proceed with the case $\alpha\in [8/3,4)$ we provide an auxiliary result.
Consider the following decomposition 
\beao 
[\bfZ \bfZ' - \diag(\bfZ \bfZ')]^2=\bfD+\bfF+\bfR\,,
\eeao
where 
\beao
\bfD=(D_{ij})_{i,j=1,\ldots,p}=\diag([\bfZ \bfZ' - \diag(\bfZ \bfZ')]^2)\,,
\eeao
The $p\times p$ matrix $\bfF$ has a zero-diagonal and 
\begin{equation*}
F_{ij}= \sum_{u=1;u\neq i,j}^p \sum_{t=1}^n  Z_{it}\, Z_{jt}\, Z_{ut}^2 ,\quad 1\le i\ne j\le p\,,
\end{equation*}
The $p\times p$ matrix $\bfR$ has a zero-diagonal and 
\begin{equation*}
R_{ij}= \sum_{u=1;u\neq i,j}^p \sum_{t_1=1}^n \sum_{t_2=1; t_2 \neq t_1}^n Z_{i,t_1}\, Z_{j,t_2}\, Z_{u,t_1}\,Z_{u,t_2} ,\quad 
1 \le i\neq j\le p\,.
\end{equation*}
\begin{lemma}\label{lem:diagonal} 
Assume the conditions of Theorem~\ref{prop:offdiagonal} and $\alpha\in(2,4)$. 
Then $a_{np}^{-4} \big(\twonorm{\bfD}+\twonorm{\bfF}+\|\bfR\|_2\big)\stp 0$.
\end{lemma}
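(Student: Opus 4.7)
The plan is to bound $\twonorm{\bfD}$, $\twonorm{\bfF}$ and $\twonorm{\bfR}$ separately and show that each is $o(a_{np}^4)$ in probability. Because $\bfD$ is diagonal, its spectral norm is just $\max_{i\le p}D_{ii}$ and I will treat it by conditioning and a union bound. For $\bfF$ and $\bfR$, which have zero diagonals and zero-mean entries, the natural first attempt is $\twonorm{\bfA}\le\frobnorm{\bfA}$ combined with a second-moment estimate and Markov's inequality.

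The main obstacle is that $\E[Z^4]=\infty$ for $\alpha\in(2,4)$, which makes the raw variance of $F_{ij}$ infinite (the $u_1=u_2$ pairings in $\E[F_{ij}^2]$ carry an $\E[Z^4]$ factor). To get around this I truncate: choose $b_n=a_{np}^{1+\delta}$ for a small $\delta>0$, which satisfies $np\,\P(|Z|>b_n)\to 0$, so with probability tending to one no entry of $\bfZ$ exceeds $b_n$; and write $Z_{it}=\tilde Z_{it}+\hat Z_{it}$ with $\tilde Z_{it}=Z_{it}\1_{\{|Z_{it}|\le b_n\}}$, centered where necessary. The contributions of the $\hat Z_{it}$'s vanish on the high-probability event $\{\max_{i,t}|Z_{it}|\le b_n\}$, and the truncated variables satisfy $\E[\tilde Z^4]\le c\,b_n^{4-\alpha}L(b_n)$ by Karamata's theorem.

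With the truncation in place I compute $\E[\tilde F_{ij}^2]$ and $\E[\tilde R_{ij}^2]$ by enumerating the index pairings forced by the zero-mean property. For $\tilde F_{ij}$ the time indices $t_1,t_2$ must coincide, producing terms of orders $p^2 n\,\E[\tilde Z^2]^4$ (from $u_1\ne u_2$) and $pn\,\E[\tilde Z^2]^2\E[\tilde Z^4]$ (from $u_1=u_2$); summed over $i\ne j$ and combined with the Karamata/Potter bounds under \ref{eq:p} with $\beta\in[0,1]$ and $\alpha<4$, this yields $\frobnorm{\bfF}^2=o(a_{np}^8)$. For $\bfR$ only $u_1=u_2$ contributes and $\E[\tilde R_{ij}^2]=O(pn^2)$, so $\frobnorm{\bfR}^2=O(p^3n^2)$; this is $o(a_{np}^8)$ in the bulk of the parameter range, but near the upper corner $\alpha\uparrow 4$ and $\beta\uparrow 1$ the Frobenius bound is tight and one needs a more refined estimate, e.g.~a higher even moment of $\tr(\bfR^k)$ or a spectral bound obtained by factoring $\bfR$ through the smaller matrix $\tilde\bfZ'\tilde\bfZ-\diag(\tilde\bfZ'\tilde\bfZ)$, to which the arguments for $\alpha\in(0,8/3)$ apply.

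For $\bfD$, write $D_{ii}=\sum_{j\ne i}\big(\sum_t Z_{it}Z_{jt}\big)^2$ and condition on the $i$th row of $\bfZ$: $\E[D_{ii}\mid Z_{i\cdot}]=(p-1)\E[Z^2]\Dr_i$, which is of order $p\,a_{np}^2=o(a_{np}^4)$ with high probability since $\max_i\Dr_i\sim a_{np}^2$ and $\alpha<4$. Conditional deviations are controlled by the same truncation together with a conditional second-moment estimate, followed by a union bound over $i\le p$. The delicate point throughout is calibrating the truncation level so that the large part of $\bfZ$ is negligible in probability while the truncated fourth moment stays small enough to beat $a_{np}^8$ in the Frobenius estimates; the constraints $\beta\in[0,1]$ and $\alpha\in(2,4)$ just barely give the necessary room.
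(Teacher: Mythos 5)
Your overall architecture (truncate, then control each of $\bfD,\bfF,\bfR$ via Frobenius or trace estimates) is broadly the paper's, and your diagnosis of $\bfR$ is accurate, but there is a genuine gap in the $\bfF$-part. The pairing $t_1=t_2$, $u_1\ne u_2$ that you record contributes $\asymp p^2n\,(\E[\tilde Z^2])^4$ to $\E[\tilde F_{ij}^2]$ for \emph{every} pair $i\ne j$, hence $\E\big[\frobnorm{\tilde\bfF}^2\big]\gtrsim p^4n$. Since $a_{np}^8=(np)^{8/\alpha+o(1)}$, the requirement $p^4n=o(a_{np}^8)$ reads $4\beta+1<8(1+\beta)/\alpha$, which fails once $\alpha\ge 8(1+\beta)/(4\beta+1)$; for $\beta=1$ this is all of $\alpha\in[16/5,4)$, squarely inside the range the lemma must cover. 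No choice of truncation level repairs this, because the term involves only second moments and survives truncation. The missing idea is that this contribution is precisely the square of the ``mean part'' $\E[Z^2\1_{\{\cdot\}}]\,(p-2)\sum_tZ_{it}Z_{jt}$, i.e.\ of the matrix $\bfT=\E[Z^2\1_{\{\cdot\}}]\,(p-2)\,(\bfZ\bfZ'-\diag(\bfZ\bfZ'))$, whose Frobenius norm is genuinely too large but whose \emph{spectral} norm is not. The paper therefore centers $Z_{ut}^2$ inside $F_{ij}$ (so that only the $u_1=u_2$ pairing survives in $\E[\wt F_{ij}^2]$, giving the harmless $p^3n\,\E[\tilde Z^4]$ term) and controls $\twonorm{\bfT}$ separately through the self-referential inequality $\twonorm{\bfT}^2\le c\,p^2\big(\twonorm{\bfD+\wt\bfF+\bfR}+\twonorm{\bfT}\big)$, which it iterates. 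You flag the tightness of the Frobenius bound for $\bfR$ near $\alpha\uparrow4$, $\beta\uparrow1$, but the same corner kills your $\bfF$ estimate, and there the remedy is not a higher trace moment but this splitting.

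Two further points. Your conditional second-moment treatment of $\bfD$ is also insufficient at $\beta=1$: the conditional variance of $D_{ii}$ given row $i$ is at best of order $p\,\E[\tilde Z^4]\max_i\sum_t\tilde Z_{it}^4\gtrsim p\,b_n^{4-\alpha}a_{np}^4$, and after the union bound over $i$ one is left with $p^2b_n^{4-\alpha}a_{np}^{-4}\gtrsim p^2a_{np}^{-\alpha}\asymp p/n$, which does not vanish when $p\asymp n$. The paper instead truncates the sums $\sum_t(Z_{it}^2Z_{ut}^2-\E[Z_1^2Z_2^2])$ at level $a_{np}^2$ and applies a first-moment large-deviation estimate (Proposition~\ref{prop:karasumsbeta}), which exploits the single-big-jump behaviour that a Chebyshev bound cannot see. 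Finally, your alternative for $\bfR$ of factoring through $\bfZ'\bfZ-\diag(\bfZ'\bfZ)$ is circular for $\beta<1$: control of that $n\times n$ matrix in spectral norm is exactly the content of the theorem, which is only available for $\beta\ge1$; the high-moment trace bound (your first suggestion, and the paper's route) is the one that closes the argument.
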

In view of this lemma we have 
\beao
a_{np}^{-4}\twonorm{\bfZ \bfZ' - \diag(\bfZ \bfZ')}^2= 
a_{np}^{-4}\twonorm{[\bfZ \bfZ' - \diag(\bfZ \bfZ')]^2}
=a_{np}^{-4}\twonorm{\bfD+\bfF+\bfR}^2\stp 0\,.
\eeao
This finishes the  proof of Theorem~\ref{prop:offdiagonal}. It is left to prove Lemma~\ref{lem:diagonal}.
\begin{proof}[Proof of the $\bfD$-part]
We have for $i=1,\ldots,p$,
\begin{equation*}
\begin{split}
D_{ii}&= \sum_{u=1}^p \sum_{t=1}^n  Z_{it}^2 Z_{ut}^2 \1_{\{ i\neq u\}}
+\sum_{u=1}^p \sum_{t_1=1}^n \sum_{t_2=1}^n Z_{i,t_1} Z_{u,t_1}Z_{u,t_2} Z_{i,t_2} \1_{\{ i\neq u\}}\1_{\{ t_1\neq t_2\}} = M_{ii}+N_{ii}\,.
\end{split}
\end{equation*}
We write $\bfM$ and $\bfN$ for diagonal matrices constructed from $(M_{ii})$ and $(N_{ii})$ such that $\bfD=\bfM+\bfN$.
First bounding $\|\bfN\|_2$ by the Frobenius norm and then applying Markov's inequality, one can  prove that $a_{np}^{-4} \twonorm{\bfN}\stp 0$.
We have 
\begin{equation*}
\frac{\E[M_{ii}]}{a_{np}^{4}} \le c\, \frac{np}{a_{np}^{4}} \to 0, \qquad \nto.
\end{equation*}
Therefore centering of $M_{ii}$ will not influence the limit of the spectral norm $a_{np}^{-4}\|\bfM\|_2$.
Writing $A_{i,u}=\{|\sum_{t=1}^n  (Z_{it}^2 Z_{ut}^2 -\E[Z_1^2Z_2^2])\1_{\{ i\neq u\}}| > a_{np}^2 \}$,
we have for $i=1, \ldots,p$,
\beao
M_{ii}-\E[M_{ii}]&=& \sum_{u=1}^p \sum_{t=1}^n  \big(Z_{it}^2 Z_{ut}^2 -\E[Z_1^2Z_2^2]\big)\,\1_{\{ i\neq u\}}\,\big[\1_{A_{i,u}}+
\1_{A_{i,u}^c}\big]\\
&=& M_{ii}^{(1)} + M_{ii}^{(2)}. 
\eeao
On the one hand, $\twonorm{M^{(2)}}\le p\, a_{np}^2$. Hence $a_{np}^{-4} \twonorm{M^{(2)}}\stp 0$. On the other hand, 
we obtain with Markov's inequality, Proposition~\ref{prop:karasumsbeta} and the Potter bounds for $\epsilon>0$ and small $\delta>0$,
\beao
\P(\twonorm{M^{(1)}} >\epsilon\, a_{np}^{4}) &=& \P(\max_{i=1,\ldots, p} |M_{ii}^{(1)}| >\epsilon\,a_{np}^{4})\\
&\le& \P\Big(\max_{i=1,\ldots, p}  \sum_{u=1}^p \Big|\sum_{t=1}^n  (Z_{it}^2 Z_{ut}^2 -\E[Z_1^2Z_2^2])
\1_{\{ i\neq u\}}\,\1_{A_{i,u}}\Big|
>\epsilon\, a_{np}^{4}\Big)\\
&\le& c \frac{p^2}{a_{np}^{4}} \E\Big[ \Big|\sum_{t=1}^n  (Z_{1t}^2 Z_{2t}^2 -\E[Z_1^2Z_2^2])\Big| 
\1_{A_{1,2}}
\Big]\\
&\sim& c\, \frac{p^2}{a_{np}^{4}} \,n\, a_{np}^{2}\, \P(Z_1^2Z_2^2>a_{np}^2) \le\dfrac{ p \,(np)^\delta}{a_{np}^{2}} \to 0,
\eeao
since $Z_1Z_2$ is \regvary\ with index $\alpha$. This finishes the proof of the $\bfD$-part.
\end{proof}
\begin{proof}[Proof of the $\bfF$-part]
Let $\delta>0$. We will use the following decomposition for $i\ne j$:
\begin{equation*}
F_{ij}= \sum_{u=1;u\neq i,j}^p \sum_{t=1}^n  Z_{it} Z_{jt} (Z_{ut}^2 - \E[Z^2 \1_{\{Z^2 \le a_{np}^{4-2\delta}\}}])+ \E [Z^2\1_{\{Z^2 \le a_{np}^{4-2\delta}\}}]\,(p-2) 
\sum_{t=1}^n  Z_{it} Z_{jt} =\wt F_{ij}+T_{ij}\,.
\end{equation*}
We observe that $\bfT=  \E [Z^2\1_{\{Z^2 \le a_{np}^{4-2\delta}\}}] \,(p-2) \,(\bfZ_n \bfZ_n' - \diag(\bfZ_n \bfZ_n'))$.
We have for some constant $c>0$, 
\beao
\|\bfT\|_2^2&=&\|\bfT^2\|_2\le
c\,p^2\,
\|(\bfZ_n \bfZ_n' - \diag(\bfZ_n \bfZ_n'))^2\|_2\\ 
&\le &c\,p^2\,\|\bfD+\wt \bfF+\bfR\|_2+ c\,p^2 \,\|\bfT\|_2\,.
\eeao
Therefore
\beam\label{eq:inequality}
\dfrac{\|\bfT\|_2}{a_{np}^4}\le c\,\dfrac{p}{a_{np}^2}\,
\Big(\dfrac{\|\bfD+\wt \bfF+\bfR\|_2}{a_{np}^4}\Big)^{1/2}+
c\,\dfrac{p}{a_{np}^2} \Big(\dfrac{\|\bfT\|_2}{a_{np}^4}\Big)^{1/2}\,. 
\eeam
In the course of the proof of this lemma we show that
\beao
\dfrac{\|\bfD+\wt \bfF+\bfR\|_2}{a_{np}^4}\stp 0\,.
\eeao
Moreover, there is a small $\vep>0$ \st
\begin{equation*}
\delta_n=\frac{p}{a_{np}^{2}}\le n^{1-4/\alpha+\vep}, \quad 1-4/\alpha+\vep<0\,.
\end{equation*}
Therefore iteration of \eqref{eq:inequality} yields for $k\ge 1$
\beam\label{eq:ko}
\dfrac{\|\bfT\|_2}{a_{np}^4}&\le& o_\P(1)
+ c\,\delta_n\Big(\delta_n\,
\Big(\dfrac{\|\bfD+\wt \bfF+\bfR\|_2}{a_{np}^4}\Big)^{1/2}\Big)^{1/2}
+c\,\delta_n \Big(\delta_n \Big(\dfrac{\|\bfT\|_2}{a_{np}^4}\Big)^{1/2}\Big)^{1/2}\nonumber\\
&=& o_\P(1)+c\,\Big(\delta_n^{4+2}\dfrac{\|\bfT\|_2}{a_{np}^4}\Big)^{1/4}\nonumber\\
&\le & o_\P(1)+c\,\Big(\delta_n^{2^k+\cdots +2}\dfrac{\|\bfT\|_2}{a_{np}^4}\Big)^{1/{2^k}}\,.
\eeam
Using some elementary moment bounds for $\|\bfT\|_2$ (e.g. a bound by the Frobenius norm),
it is not difficult to show that $n^{-l} \|\bfT\|_2\stp 0$ for some sufficiently large $l$. Thus we achieve that the \rhs\
in \eqref{eq:ko} converges to zero in \pro y. 
\par
It remains to show that $a_{np}^{-4} \twonorm{\widetilde{\bfF}}\stp 0$.
With the notation 
$B_{u,t}=\{Z_{ut}^2 \le a_{np}^{4-2\delta}  \} $
for some small $\delta>0$,
we decompose $Z_{it} Z_{jt} (Z_{ut}^2 - \E[Z^2 \1_{\{Z^2 \le a_{np}^{4-2\delta}\}}])$ as follows:
\begin{equation*}
\begin{split}
&Z_{it} Z_{jt} (Z_{ut}^2 \1_{B_{u,t}} - \E[Z^2 \1_{\{Z^2 \le a_{np}^{4-2\delta}\}}]) + Z_{it} Z_{jt} Z_{ut}^2 \1_{B_{u,t}^c}\,.
\end{split}
\end{equation*}
We decompose the matrix $\widetilde{\bfF}$ accordingly: 
\begin{equation*}
\widetilde{\bfF}= \widetilde{\bfF}^{(1)} + \widetilde{\bfF}^{(2)}\,,
\end{equation*}
\st, for example, 
\begin{equation*}
\widetilde{F}^{(1)}_{ij} = \sum_{u=1;u\neq i,j}^p \sum_{t=1}^n  Z_{it} Z_{jt} (Z_{ut}^2 \1_{B_{u,t}} - \E[Z^2 \1_{\{Z^2 \le a_{np}^{4-2\delta}\}}])\,, \quad i \neq j.
\end{equation*} 
\noindent
$\widetilde{\bfF}^{(1)}$: 
Bounding the spectral norm by the Frobenius norm, applying Markov's inequality and using Karamata's theorem together with the Potter bounds one can check that for $\epsilon>0$ and small $\delta>0$,
\begin{equation*}
\begin{split}
\P&(\twonorm{\widetilde{\bfF}^{(1)}}> \epsilon\, a_{np}^4) \le c\, a_{np}^{-8}\, \E \Big[ \sum_{i,j=1}^p (\widetilde{F}^{(1)}_{ij})^2  \Big]\\
&\le  c \,\frac{p^3\, n}{a_{np}^{8}}\E[(Z_1Z_2)^2 ] \E[(Z^2 \1_{\{Z^2 \le a_{np}^{4-2\delta}\}} - \E[Z^2 \1_{\{Z^2 \le a_{np}^{4-2\delta}\}}])^2 ]\\
&\le c \,\frac{p^3\, n}{a_{np}^{8}} \E[Z^4 \1_{\{Z^2 \le a_{np}^{4-2\delta}\}}] \\
&\le c \frac{p^3 n}{a_{np}^{4 \delta}} \P(|Z|> a_{np}^{2-\delta}) \to 0\,,
\qquad \nto,
\end{split}
\end{equation*}
$\widetilde{\bfF}^{(2)}$: 
We have for small $\delta>0$,
\beao
\P(\twonorm{\widetilde{\bfF}^{(2)}}> \epsilon\, a_{np}^4) &\le& 
\P\Big(\bigcup_{1\le u\le p\,,1\le t\le n} B_{u,t}^c\Big)\\
&\le & p\,n\, \P(|Z|> a_{np}^{2-\delta})\to 0\,,\qquad\,\nto\,.
\eeao
The proof of the $\bfF$-part is complete.
\end{proof}
\begin{proof}[Proof of the $\bfR$-part]

We have
\beao
\E [\|\bfR\|_2^2]\le \E[\|\bfR\|_F^2]&\le& \sum_{i,j=1}^p \sum_{u=1}^p \sum_{t_1=1}^n \sum_{t_2=1}^n (\E[Z^2])^4\le c\, p^3\, n^2\,.
\eeao
Therefore and by Markov's inequality for $\epsilon>0$,
\begin{equation}\label{eq:ljo}
\P(\twonorm{\bfR}> \epsilon\, a_{np}^4) 
\le c\, \frac{p^3 \,n^2}{a_{np}^8}
 \to 0, \qquad \nto\,,
\end{equation}
as long as $\alpha \in (2,16/5)$.
For $\alpha \in [16/5,4)$  
we use a similar idea for the truncated entries. Write $\bfR=\overline{\bfR} + \widetilde{\bfR}$, where for $i\neq j$
\begin{equation*}
\begin{split}
\overline{R}_{ij} &= \sum_{u=1;u\neq i,j}^p \sum_{t_1=1}^n \sum_{t_1=1; t_1 \neq t_2}^n Z_{i,t_1} Z_{j,t_2} Z_{u,t_1}Z_{u,t_2}\, 
\1_{A_{i,j,t_1,t_2}},\\
\widetilde{R}_{ij} &= \sum_{u=1;u\neq i,j}^p \sum_{t_1=1}^n \sum_{t_1=1; t_1 \neq t_2}^n Z_{i,t_1} Z_{j,t_2} Z_{u,t_1}Z_{u,t_2}\, 
\1_{A_{i,j,t_1,t_2}^c},
\end{split}
\end{equation*}
with $A_{i,j,t_1,t_2}^c=\{ -h(a_{np}) \le Z_{i,t_1}, Z_{j,t_2}, Z_{u,t_1},Z_{u,t_2}\le a_{np} \}$ and $h$ as in \eqref{eq:fcth}. 
Analogously to \eqref{eq:ljo}, using the fact that the \rv s   $Z_{i,t_1} Z_{j,t_2} Z_{u,t_1}Z_{u,t_2}\, 
\1_{A_{i,j,t_1,t_2}}$ are uncorrelated for the considered index set,
one obtains for $\epsilon>0$, 
\beao
\P(\twonorm{\overline{\bfR}}> \epsilon\, a_{np}^4) &\le& c \,\frac{p^3\, n^2}{a_{np}^8}
 \,\E[Z^2 \1_{\{ |Z| > \min(C,C^{-1})\,a_{np}\}}]\\ &\le& c \,\frac{p^3 \,n^2}{a_{np}^6} \,\P(|Z| > \min(C,C^{-1}) \,a_{np}) 
\to 0
\eeao
as $\nto$, where we used Karamata's theorem and $\P(A_{i,j,t_1,t_2}) \le c\, \P(|Z|>\min(C,C^{-1})\,a_{np})$.
\par
We introduce the truncated \rv s $\wt Z_{it}=Z_{it}\1_{\{ -h(a_{np}) \le Z_{it} \le a_{np} \}}$ with generic element $\wt Z$. 
We will repeatedly use the following inequality which is valid
for a real symmetric matrix $\bfM$:
\beao
\twonorm{\bfM}^2 \le \frobnorm{\bfM}^2 = \tr(\bfM^2).
\eeao
Then we have for $k\ge 1$, $\twonorm{\widetilde{\bfR}^{2^{k-1}}}^2 = \twonorm{\widetilde{\bfR}^{2^k}}$ and 
\beao
 \twonorm{\widetilde{\bfR}}^{2^k} \le \tr(\widetilde{\bfR}^{2^k})= \sum_{i,j=1}^p (\widetilde{R}^{2^{k-1}})_{ij}^2\,.
\eeao
This together with the Markov inequality of order $2^k$ yields
\begin{equation}\label{eq:sdfgb}
\P(  \twonorm{\widetilde{\bfR}}> ca_{np}^{4}) \le c a_{np}^{-4 \cdot 2^k} \E \Big[\sum_{i,j=1}^p (\widetilde{R}^{2^{k-1}})_{ij}^2 \Big] .
\end{equation}
Next we study the structure of $\widetilde{\bfR}^{2^{k-1}}$. The $(i,j)$-entry of this matrix is
\begin{equation}\label{eq:powerexpl}
(\widetilde{R}^{2^{k-1}})_{ij} = \sum_{i_1=1}^p 
\cdots \sum_{i_{2^{k-1}-1}=1}^p \widetilde{R}_{i,i_1} \widetilde{R}_{i_1,i_2}\cdots  \widetilde{R}_{i_{2^{k-1}-2},i_{2^{k-1}-1}} \widetilde{R}_{i_{2^{k-1}-1},j}.
\end{equation}
In view of \eqref{eq:powerexpl} and by definition of $\widetilde{\bfR}$, $(\widetilde{R}^{2^{k-1}} )_{ij}$ contains
exactly $2^k-1$ sums running from 1 to $p$, and $2^k$ sums running from 1 to $n$. 
Now we consider the expectation on the right-hand side of \eqref{eq:sdfgb}. 
The highest and lowest powers of $\wt Z_{it}$ in this expectation are $2^k$ and $1$. Let $(I,T) = ((i_1,t_1), \ldots,(i_{2^k},t_{2^k}))$. We have 
\begin{equation*}
\E \Big[\sum_{i,j=1}^p (\widetilde{R}^{2^{k-1}})_{ij}^2 \Big] = \sum_{(I,T)\in S} \E[\widetilde{Z}_{i_1,t_1} \widetilde{Z}_{i_2,t_2} \cdots \widetilde{Z}_{i_{2^k},t_{2^k}} 
],
\end{equation*}
where $S\subset \{1, \ldots,p\}^{2^k} \times \{1, \ldots,n\}^{2^k}$ is the index set that 
covers all combinations of indices that arise on the left-hand side. 
Since $\E[\widetilde{Z}]=0$, each $\widetilde{Z}$ in $\widetilde{Z}_{i_1,t_1} \widetilde{Z}_{i_2,t_2} \cdots \widetilde{Z}_{i_{2^k},t_{2^k}} $ 
must appear at least twice for the expectation of this product to be non-zero. Let $S_1 \subset S$ be the set of all those indices 
that make  a non-zero contribution to the sum. From the specific structure of $\widetilde{\bfR}$,  \eqref{eq:powerexpl} and the considerations above it now follows that the cardinality of $S_1$ has the following bound
\begin{equation*}
|S_1| \le c(k)\, p^2\, p^{2^k-1}\, n^{2^k}=c\, p^{2^k+1}\,n^{2^k}\,.
\end{equation*}
For $l=2,3$ we can use $\E[|\widetilde{Z}^l|]\le c$. If $l\ge 4$, we infer with Karamata's theorem 
\begin{equation}\label{eq:23424}
\E[|\widetilde{Z}^l|] \le c\, a_{np}^{l}\, \P(|Z| > a_{np}).
\end{equation}
The subset of $S_1$ (say $S_l$) which generates a  $\widetilde{Z}^l$ for $l\ge 4$ is much smaller than $S_1$. Also its cardinality is divided by at least $n$ if we go from $l$ to $l+1$, i.e. $|S_l|\ge n |S_{l+1}|$. Observe that $n a_{np}^{-1}$ converges to infinity. This combined with \eqref{eq:23424} tells us that only the case of every $\widetilde{Z}$ appearing exactly twice is of interest since it has most 
influence on the expectation in \eqref{eq:sdfgb}. We conclude that
\begin{equation*}
\begin{split}
\dfrac{1}{a_{np}^{4 \cdot 2^k}}\, \E \Big[\sum_{i,j=1}^p (\widetilde{R}^{2^{k-1}})_{ij}^2 \Big]
&\le c\, \dfrac{|S_1|}{a_{np}^{4 \cdot 2^k}}\le c\,p\, \Big( \frac{np}{a_{np}^4} \Big)^{2^k}\le c\, (np)\,  \Big( \frac{np}{a_{np}^4} \Big)^{2^{k}}.
\end{split}
\end{equation*}
The  expression on the \rhs\ converges to $0$ if $1+2^{k}-2^{k+2}/\alpha <0$ or equivalently
\beao
k>\log\Big( \frac{\alpha}{4-\alpha}\Big) (\log 2)^{-1}.
\eeao
Since $k$ was arbitrary the proof of the $\bfR$-part is finished.
\end{proof}

\subsection{Proof of \eqref{eq:eigena}}
We define the $p\times p$ matrix $\Y_n^\rightarrow$ as the diagonal matrix with elements
\begin{equation*}
(\Y_n^\rightarrow)_{ii}= \max_{t=1, \ldots,n} Z_{it}^2\,, \qquad i=1,\ldots,p\,.
\end{equation*}
Correspondingly, we define the $n\times n$ matrix $\Y_n^\downarrow$ as the diagonal matrix with elements
\begin{equation*}
(\Y_n^\downarrow)_{tt}= \max_{i=1, \ldots,p} Z_{it}^2\,, \qquad t=1,\ldots,n\,.
\end{equation*}
\begin{lemma}\label{cor:688}
Assume the conditions of Theorem~\ref{thm:iidmain}. 
\begin{enumerate}
\item
If $\beta\in ((\alpha/2-1)_+,1]$ we have
\beao
a_{np}^{-2}\,\max_{i=1,\ldots,p}\big|\la_{(i)}-\la_{(i)}(\Y_n^\rightarrow)\big|\stp 0\,,\qquad\nto \,.
\eeao
\item 
If $\beta^{-1}\in ((\alpha/2-1)_+,1)$ we have
\beao
a_{np}^{-2}\,\max_{i=1,\ldots,n}\big|\la_{(i)}-\la_{(i)}(\Y_n^\downarrow)\big|\stp 0\,,\qquad\nto \,.
\eeao
\end{enumerate}
\end{lemma}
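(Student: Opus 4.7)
The plan is to reduce Lemma~\ref{cor:688} to \eqref{eq:eigena} via a \pp\ ``birthday'' argument. Note that in both parts of the lemma, the hypotheses imply $\min(\beta,\beta^{-1})\in ((\alpha/2-1)_+,1]$, so Theorem~\ref{thm:iidmain}(3) applies and delivers
\[
a_{np}^{-2}\,\max_{i=1,\ldots,p}\big|\la_{(i)}-Z_{(i),np}^2\big|\stp 0.
\]
By the triangle inequality it suffices, in part (1), to show
\[
a_{np}^{-2}\,\max_{i=1,\ldots,p}\big|Z_{(i),np}^2-\la_{(i)}(\Y_n^\rightarrow)\big|\stp 0,
\]
and an analogous statement (with rows replaced by columns and $p$ by $n$) in part (2).

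The key observation is that $\la_{(i)}(\Y_n^\rightarrow)$ is the $i$th largest of the $p$ row-maxima $\max_t Z_{it}^2$, each drawn from the pool of $np$ squared entries. It follows that $0\le Z_{(i),np}^2-\la_{(i)}(\Y_n^\rightarrow)$ for every $i$, with equality on the event that the top $i$ entries of $(Z_{it}^2)$ lie in $i$ distinct rows. Fix $\epsilon>0$ and set $N_\epsilon^{(n)}=\#\{(i,t):Z_{it}^2>\epsilon a_{np}^2\}$. From \eqref{eq:gp} one has $N_\epsilon^{(n)}\cid \mathrm{Poisson}(\epsilon^{-\alpha/2})$, so $N_\epsilon^{(n)}$ is tight. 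By the exchangeability of the iid entries, conditional on $N_\epsilon^{(n)}=k$ the positions of the corresponding $k$ top cells are uniformly distributed among the $\binom{np}{k}$ subsets of size $k$; a union bound then shows that two of them share a row with probability at most $\binom{k}{2}(n-1)/(np-1)\sim \binom{k}{2}/p$. Combined with tightness of $N_\epsilon^{(n)}$ and $p\to\infty$, this yields
\[
\P\big(\text{all $N_\epsilon^{(n)}$ top entries lie in distinct rows}\big)\to 1.
\]
On this event $Z_{(i),np}^2=\la_{(i)}(\Y_n^\rightarrow)$ for $i\le N_\epsilon^{(n)}$, while for $i>N_\epsilon^{(n)}$ both quantities lie in $[0,\epsilon a_{np}^2]$, so the uniform normalized difference is at most $\epsilon$ with probability tending to one. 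Letting $\epsilon\downarrow 0$ completes part~(1).

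For part (2), the hypothesis $\beta^{-1}\in((\alpha/2-1)_+,1)$ forces $n<p$ for large $n$, and the identical argument applies verbatim with ``row'' replaced by ``column'': two uniformly placed top entries share a column with probability of order $\binom{k}{2}/n\to 0$. The main potential obstacle lies not in this plan but upstream, in establishing \eqref{eq:eigena} itself; granted that, the lemma reduces to the clean birthday/\pp\ argument above, whose only minor technicality is the asymptotic uniformity of the top cells' positions, which is immediate from exchangeability of the entries.
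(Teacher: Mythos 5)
Your argument is circular. The lemma you are asked to prove is the main step in the paper's own derivation of \eqref{eq:eigena}: the paper first establishes this lemma, and then combines it with Lemma~\ref{lem:redorder} (which states precisely that $a_{np}^{-2}\max_{i}|\la_{(i)}(\Y_n^\rightarrow)-Z_{(i),np}^2|\stp 0$) to conclude Theorem~\ref{thm:iidmain}(3). By taking \eqref{eq:eigena} as given and supplying a birthday argument for the step $Z_{(i),np}^2\approx\la_{(i)}(\Y_n^\rightarrow)$, you have in effect reproved Lemma~\ref{lem:redorder} and then run the paper's implication backwards. Nothing in your proposal establishes the substantive content of the lemma, namely that each row sum $D_i^\rightarrow=\sum_{t=1}^n Z_{it}^2$ is asymptotically equivalent to its largest summand $\max_{t}Z_{it}^2$, uniformly over the $p$ rows; there is no route to \eqref{eq:eigena} in the paper that does not pass through this fact, so it cannot be assumed here.

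The paper's proof is short once the right ingredients are in place: by Weyl's inequality \eqref{eq:weylin} and the triangle inequality,
\[
a_{np}^{-2}\max_{i=1,\ldots,p}\big|\la_{(i)}-\la_{(i)}(\Y_n^\rightarrow)\big|
\le a_{np}^{-2}\twonorm{\bfZ\bfZ'-\diag(\bfZ\bfZ')}
+ a_{np}^{-2}\max_{i=1,\ldots,p}\Big|D_i^\rightarrow-\max_{t=1,\ldots,n}Z_{it}^2\Big|\,,
\]
where the first term vanishes by Theorem~\ref{prop:offdiagonal}(1) and the second by the large-deviation estimate of Lemma~\ref{thm:keyresult} applied to the nonnegative array $(Z_{it}^2)$, whose tail index is $\alpha/2$ --- this is exactly where the restriction $\beta>(\alpha/2-1)_+$ enters, and your proposal never confronts it. Part (2) follows by interchanging the roles of $n$ and $p$. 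Your Poisson/birthday computation is correct as far as it goes and is a reasonable alternative to the paper's proof of Lemma~\ref{lem:redorder}, but it addresses the wrong half of the problem.
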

\begin{proof}
We restrict ourselves to the proof in the case $\beta\in (0,1]$; the case $\beta>1$ can again be handled by switching 
from $\bfZ\bfZ'$ to $\bfZ'\bfZ$.
An application of Weyl's inequality (see \eqref{eq:weylin}) and the triangle inequality yield
\begin{equation*}
\begin{split}
a_{np}^{-2}\,\max_{i=1,\ldots,p}\big|\la_{(i)}-\la_{(i)}(\Y_n^\rightarrow)\big|& \le 
a_{np}^{-2} \twonorm{\bfZ \bfZ' - \diag(\bfZ \bfZ')} + a_{np}^{-2} \twonorm{ \diag(\bfZ \bfZ')- \diag(\Y_n^\rightarrow)}.
\end{split}
\end{equation*}
The first term on the right-hand side converges to $0$ in probability by Theorem~\ref{prop:offdiagonal}(1).
As regards the second term we have 
\beao
a_{np}^{-2} \twonorm{ \diag(\bfZ \bfZ')- \diag(\Y_n^\rightarrow)}&=& a_{np}^{-2}\max_{i=1,\ldots,p} \Big|D_i^\rightarrow - \max_{t=1,\ldots,n} Z_{it}^2\Big|\,.
\eeao
The \rhs\ converges to zero in \pro y in view of Lemma~\ref{thm:keyresult} applied to $(Z_{it}^2)$.
\end{proof}
Now \eqref{eq:eigena} follows from the next result.
\begin{lemma}\label{lem:redorder}
Assume the conditions of Theorem~\ref{thm:iidmain}. 
\begin{enumerate}
\item
If $\beta\in ((\alpha/2-1)_+,1]$ we have
\beao
a_{np}^{-2}\,\max_{i=1,\ldots,p}\big|\la_{(i)}(\Y_n^\rightarrow)-Z_{(i),np}^2\big|\stp 0\,,\qquad\nto \,.
\eeao
\item 
If $\beta^{-1}\in ((\alpha/2-1)_+,1)$ we have
\beao
a_{np}^{-2}\,\max_{i=1,\ldots,n}\big|\la_{(i)}(\Y_n^\downarrow)-Z_{(i),np}^2\big|\stp 0\,,\qquad\nto \,.
\eeao
\end{enumerate}
\end{lemma}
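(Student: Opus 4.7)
My plan is to exploit the fact that $\Y_n^\rightarrow$ is diagonal, so its ordered eigenvalues $\la_{(i)}(\Y_n^\rightarrow)$ coincide with the order statistics $M_{(i)}$ of the row-maxima $M_j=\max_{t}Z_{jt}^2$, $j=1,\ldots,p$. Because each $M_j$ is itself one of the squared entries $(Z_{jt}^2)$, the inequality $M_{(i)}\le Z_{(i),np}^2$ holds deterministically, so the quantity to control is the nonnegative difference $Z_{(i),np}^2-M_{(i)}$. I focus on part~(1); part~(2) is a mirror image obtained by swapping the roles of rows and columns.

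Fix $\varepsilon>0$ and set $K_\varepsilon=\#\{(i,t):Z_{it}^2>\varepsilon\,a_{np}^2\}$. The first step is the point process input: Lemma~\ref{lem:pp}(2) (equivalently \eqref{eq:gp}) gives $K_\varepsilon\std \mathrm{Poisson}(\varepsilon^{-\alpha/2})$, so the family $(K_\varepsilon)_n$ is tight. The second, more substantive step is combinatorial: for every fixed $k\ge 1$, the $k$ largest of the $np$ squared entries occupy $k$ distinct rows with probability tending to $1$. Since the iid structure of $\bfZ$ makes the cell positions exchangeable, the positions of the top $k$ values (after any symmetric tie-breaking) form a uniformly random $k$-subset of $\{1,\ldots,p\}\times\{1,\ldots,n\}$; a union bound over same-row pairs then yields
\beao
\P(B_k) \;\le\; p\binom{n}{2}\,\frac{k(k-1)}{np(np-1)} \;=\; O(k^2/p)\;\to\;0,\qquad \nto,
\eeao
where $B_k$ denotes the monotone event that two of the top $k$ squared entries share a row.

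The third step combines the two ingredients via the inclusion $B_{K_\varepsilon}\cap\{K_\varepsilon\le M\}\subseteq B_M$ (valid because $B_k$ is increasing in $k$). On $B_{K_\varepsilon}^c$, the $K_\varepsilon$ rows carrying the top $K_\varepsilon$ squared entries are pairwise distinct and the remaining entries in those rows are bounded by $\varepsilon a_{np}^2$, forcing $M_{(i)}=Z_{(i),np}^2$ for $i\le K_\varepsilon$ and $M_{(i)},Z_{(i),np}^2\le\varepsilon a_{np}^2$ for $i>K_\varepsilon$. Therefore
\beao
\P\Big(\max_{i\le p}\big|\la_{(i)}(\Y_n^\rightarrow)-Z_{(i),np}^2\big|>\varepsilon\,a_{np}^2\Big)\;\le\;\P(K_\varepsilon>M)+\P(B_M).
\eeao
Letting $n\to\infty$ kills $\P(B_M)$ for each fixed $M$, and subsequently $M\to\infty$ kills $\P(K_\varepsilon>M)$ by tightness. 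Part~(2) follows from the same argument applied to the column-maxima $\max_i Z_{it}^2$, the corresponding distinct-columns probability being $O(k^2/n)\to 0$ because $n\to\infty$.

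The hard part, if any, is not analytic but bookkeeping: making the ``top $k$ positions are uniformly random'' statement rigorous despite possible atoms of $Z^2$. I would handle this with a symmetric tie-breaking rule (e.g.\ based on independent Uniform$[0,1]$ labels) or an arbitrarily small independent continuous perturbation in the spirit of the one employed at $\alpha=2$ in the proof of Theorem~\ref{prop:offdiagonal}; either device preserves exchangeability and is asymptotically negligible on the scale $a_{np}^2$.
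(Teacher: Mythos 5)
Your argument is correct, and it reaches the same structural fact as the paper --- the largest squared entries lie in distinct rows with high probability, and everything below the cutoff is negligible on the scale $a_{np}^2$ --- but by a genuinely different implementation. The paper works with the \emph{absolute} threshold $a_{np}^{2\delta}$ for a fixed $\delta<1$ satisfying $\delta>\tfrac{2+\beta}{2(1+\beta)}$: Lemma~\ref{lem:onecolumn} (a Binomial/Taylor computation) shows that no row has two exceedances of this low threshold, and Lemma~\ref{prop:seqk} supplies a sequence $k_n\to\infty$ with $Z_{(k_n),np}^2>a_{np}^{2(1-\vep)}$ w.h.p., so that $V_{(i)}=Z_{(i),np}^2$ for $i\le k_n$ while the residual is bounded by $2\,Z_{(k_n+1),np}^2=o_\P(a_{np}^2)$. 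You instead use the \emph{relative} threshold $\vep\,a_{np}^2$, tightness of the exceedance count $K_\vep$ (the Poisson limit from \eqref{eq:gp}), and an exchangeability union bound $\P(B_M)=O(M^2/p)$ for each fixed $M$, glued together by the monotonicity $B_{K_\vep}\cap\{K_\vep\le M\}\subseteq B_M$. Your computation of $\P(B_M)$ is right (the expected number of same-row pairs among a uniform $M$-subset of the $np$ cells is $\sim M^2/(2p)$), and the identification $M_{(i)}=Z_{(i),np}^2$ for $i\le K_\vep$ together with $M_{(i)},Z_{(i),np}^2\le\vep a_{np}^2$ for $i>K_\vep$ on the good event closes the argument. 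What your route buys: it is more self-contained (it bypasses both auxiliary lemmas) and needs only $p\to\infty$, not the restriction $\beta\in(0,1]$ implicit in the exponent condition $\tfrac{2+\beta}{2(1+\beta)}<\delta<1$. What it costs: the tie-breaking bookkeeping, which you correctly flag; the uniform-label device does handle it, since the multisets of row maxima and of top-order statistics are unaffected by how ties are broken. The paper's threshold lemma is the stronger statement (no row has two entries above $a_{np}^{2\delta}$ with $\delta<1$), but for the purposes of this lemma your fixed-$\vep$ version suffices.
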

\begin{proof}
We focus on part (1). We write 
$V_{(1)}\ge \cdots \ge V_{(p)}$ for the order statistics of $(\max_{t=1,\ldots,n}Z_{it}^2)$. By definition of the order statistics we have $Z_{(i),np}^2\ge V_{(i)}$ for $i=1,\ldots,p$. We choose $\delta$ such that $1>\delta >\frac{2+\beta}{2(1+\beta)}$ and define the event
\beao
B_{np}^{2\delta}= \{ \mbox{There is a row of $(Z_{it}^2)$ with at least two entries larger than $a_{np}^{2\delta}$.} \}\,.
\eeao
By Lemma~\ref{lem:onecolumn}, $\P(B_{np}^{2\delta})\to 0$. 

Next, we choose $0<\vep < 1-\delta$. Then Lemma~\ref{prop:seqk} guarantees the existence of a sequence $k=k_n \to \infty$ such that the event 
\begin{equation*}
\Omega_n = \{ Z_{(k),np}^2 > a_{np}^{2(1-\vep)} \}
\end{equation*}
satisfies $\P(\Omega_n^c)\to 0$. On the event $(B_{np}^{2\delta})^c \cap \Omega_n$ we have 
\begin{equation*}
V_{(i)}-Z_{(i),np}^2=0\,, \quad i=1,\ldots,k.
\end{equation*}
This shows for $\gamma>0$,
\begin{equation*}
\begin{split}
\limsup_{\nto}\P\big(a_{np}^{-2}\max_{i=1,\ldots,p} |V_{(i)}-Z_{(i),np}^2|>\gamma\big)
&\le \limsup_{\nto}\P\big(\{a_{np}^{-2}\max_{i=1,\ldots,p} |V_{(i)}-Z_{(i),np}^2|>\gamma\}\cap (B_{np}^{2\delta})^c \cap \Omega_n\big)\\
&\quad + \limsup_{\nto} \P(B_{np}^{2\delta})  +\limsup_{\nto} \P(\Omega_n^c)\\
&= \limsup_{\nto}\P\big(\{a_{np}^{-2}\max_{i=k+1,\ldots,p} |V_{(i)}-Z_{(i),np}^2|>\gamma\}\cap (B_{np}^{2\delta})^c \cap \Omega_n\big)\\
&\le \limsup_{\nto}\P\big( 2 \, a_{np}^{-2} \,Z_{(k+1),np}^2 >\gamma \big) =0.
\end{split}
\end{equation*}
\end{proof}

\section{Generalization to autocovariance matrices}\label{sec:autocov}\setcounter{equation}{0}

An important topic in multivariate time series analysis is the study of the covariance structure. From the field $(Z_{it})$ we construct the $p\times n$ matrices
\beao
\z(s,k)=\z_n(s,k)=(Z_{i-s,t-k})_{i=1,\ldots,p;t=1,\ldots,n}\,, \quad s,k\in\Z\,.
\eeao
We introduce the (non-normalized) {\em generalized sample autocovariance matrices}  
\beao
(\z(0,0)\z(s,k)')\,, \quad s,k\in\Z\,,
\eeao
with entries
\beao
(\z(0,0)\z(s,k)')_{ij} = \sum_{t=1}^n Z_{i,t} \,Z_{j-s,t-k}\,,\qquad i,j=1,\ldots,p\,.
\eeao
If $\min(|s|,|k|) \neq 0$, the generalized sample autocovariance matrix $\z(0,0)\z(s,k)'$ is not symmetric and might thus have complex eigenvalues. In what follows, we will be interested in the {\em singular values} $\la_1(s,k), \ldots, \la_p(s,k)$ of $\z(0,0)\z(s,k)'$.  The singular values of a matrix $\bfA$ are the square roots of the eigenvalues of $\bfA\bfA'$. We reuse the notation $(\la_i(s,k))$ for the singular values 
and again write $\la_{(1)}(s,k)\ge \cdots  \ge\la_{(p)}(s,k)$ for their order statistics.
\par
\begin{theorem}\label{thm:iidauto}
Assume $s,k\in \Z$. Consider the $p\times n$-dimensional matrices $\z(0,0)$ and $\z(s,k)$ with iid entries. We assume the following conditions:
\begin{itemize} \item
The \regvar\ condition \eqref{eq:regvar} for some  
$\alpha \in (0,4)$. 
\item $\E [Z]=0$ for $\alpha\ge 2$.
\item
The integer \seq\ $(p_n)$ 
has growth rate \ref{eq:p} for some  $\beta\ge 0$.
\end{itemize}
\begin{itemize}
\item[(1)] If $k\neq 0$, then 
\beao
a_{np}^{-2} \, \la_{(1)}(s,k) \cip 0\,.
\eeao
\end{itemize}
Now assume $k=0$ and recall the notation $D_{(i)}^{\rightarrow}$ and $D_{(i)}^{\downarrow}$ from Section \ref{sec:section2.2}. Then the following statements hold:
\begin{itemize}
\item[(2)] If $\beta\in [0,1]$, then
\beam\label{eq:rowauto}
a_{np}^{-2}\,\max_{i=1,\ldots,p-|s|}\big|\la_{(i)}(s, 0)-D_{(i)}^{\rightarrow}\big|\stp 0\,.
\eeam

\item[(3)] If $\beta > 1$, then
\beam\label{eq:colauto}
a_{np}^{-2}\,\max_{i=1,\ldots,n-|s|}\big|\la_{(i)}(s, 0)-D_{(i)}^{\downarrow}\big|\stp 0\,.
\eeam

\item[(4)] If  $\min(\beta,\beta^{-1} ) \in ((\alpha/2-1)_+,1]$, then
\beam\label{eq:eigenauto}
a_{np}^{-2}\,\max_{i=1,\ldots,p-|s|}\big|\la_{(i)}(s, 0)-
Z_{(i),np}^2\big|\stp 0\,.
\eeam
\end{itemize}
\end{theorem}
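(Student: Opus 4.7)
The plan is to reduce Theorem~\ref{thm:iidauto} to the techniques already used to prove Theorem~\ref{thm:iidmain} by identifying, for each choice of $(s,k)$, the part of $\bfZ(0,0)\bfZ(s,k)'$ that carries the large entries. When $k=0$ this part is a ``shifted diagonal'' matrix $\bfS^{(s)}$ with entries $(\bfS^{(s)})_{i,i+s}=D_i^\rightarrow$ for the $p-|s|$ indices $i$ in the valid range; all other entries of $\bfZ(0,0)\bfZ(s,0)'$ are of the form $\sum_t Z_{i,t}Z_{j-s,t}$ with $j-s\neq i$ and therefore sums of products of independent regularly varying variables. When $k\neq 0$ every entry of $\bfZ(0,0)\bfZ(s,k)'$ involves time indices in $Z_{i,t}$ and $Z_{j-s,t-k}$ that differ by $k\neq 0$, so every entry falls into this same ``off-shifted-diagonal'' class and no shifted diagonal is present.

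First I would establish the spectral-norm bound $a_{np}^{-2}\|\bfE^{(s)}\|_2\cip 0$ for $\bfE^{(s)}=\bfZ(0,0)\bfZ(s,0)'-\bfS^{(s)}$ in the regime $\beta\in[0,1]$. The key point is that the entries of $\bfE^{(s)}$ have exactly the same independence and marginal-tail structure as the off-diagonal entries of $\bfZ\bfZ'$ that were handled in Theorem~\ref{prop:offdiagonal}. Consequently the Frobenius-norm + Markov/truncation arguments used there for $\alpha\in(0,8/3)$, and the elaborate $\bfD+\bfF+\bfR$ decomposition of $[\bfZ\bfZ'-\diag(\bfZ\bfZ')]^2$ used for $\alpha\in[8/3,4)$, all transfer to $\bfE^{(s)}$ with only notational changes (the product $Z_{it}Z_{jt}$ is replaced by $Z_{it}Z_{j-s,t}$, whose tail and moment properties are unchanged). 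The same computation, carried out on the whole matrix with no shifted diagonal removed, proves part (1): for $k\neq 0$ even the would-be diagonal entries $\sum_t Z_{i,t}Z_{i-s,t-k}$ are sums of products of independent variables and hence contribute only $o_\P(a_{np}^2)$, so $\la_{(1)}(s,k)=\|\bfZ(0,0)\bfZ(s,k)'\|_2=o_\P(a_{np}^2)$. Part (3) for $\beta>1$ follows by passing to $\bfZ(s,0)'\bfZ(0,0)$ and swapping the roles of $n$ and $p$.

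Next, knowing $a_{np}^{-2}\|\bfE^{(s)}\|_2\cip 0$, I would invoke Weyl's inequality for singular values,
\[
\max_i|\la_{(i)}(s,0)-\la_{(i)}(\bfS^{(s)})|\le\|\bfE^{(s)}\|_2,
\]
to reduce everything to the singular values of $\bfS^{(s)}$. Since $\bfS^{(s)}\bfS^{(s)'}$ is diagonal with entries $(D_i^\rightarrow)^2$ on the valid rows and zeros elsewhere, the nonzero singular values of $\bfS^{(s)}$ are precisely the $p-|s|$ values $D_i^\rightarrow$ with $i$ in the valid range. To compare their ordered version with the full order statistics $D_{(i)}^\rightarrow$ I would use a cut-off argument: for any fixed $k$ the top-$k$ order statistics of the full sequence and of the valid sub-sequence coincide with probability $\binom{p-|s|}{k}/\binom{p}{k}\to 1$, while for $i>k$ both quantities are dominated by $D_{(k)}^\rightarrow$, whose normalization $a_{np}^{-2}D_{(k)}^\rightarrow$ converges in distribution to $\Gamma_k^{-2/\alpha}$ by Lemma~\ref{lem:pp}(1) and can be made arbitrarily small by letting $k\to\infty$ after $n$. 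This yields parts (2) and (3). Part (4) then follows by combining the approximation by $D_{(i)}^\rightarrow$ (or $D_{(i)}^\downarrow$) with Lemma~\ref{lem:redorder} under the additional growth restriction $\min(\beta,\beta^{-1})\in((\alpha/2-1)_+,1]$.

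The main technical obstacle is the $\alpha\in[8/3,4)$ portion of the spectral-norm bound for $\bfE^{(s)}$. Already in the unshifted case this required the delicate trace bound on $[\bfZ\bfZ'-\diag(\bfZ\bfZ')]^{2^k}$ together with a combinatorial count of the dominant multi-index set ($|S_1|\le c\,p^{2^k+1}n^{2^k}$). The shift by $s$ reindexes the random variables but preserves both the independence pattern that forces odd-moment terms to vanish and the cardinality of the dominant index set, so that bound carries over verbatim; verifying this carefully, and re-running the associated $\bfD$-, $\bfF$- and $\bfR$-parts with $Z_{j-s,t}$ in place of $Z_{jt}$, is the most tedious step. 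A secondary obstacle is making the cut-off argument uniform in $i\in\{1,\dots,p-|s|\}$, which is handled by the fact that vague convergence of the point processes in Lemma~\ref{lem:pp} controls tails uniformly once the cut-off level $k$ is taken large enough.
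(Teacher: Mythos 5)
Your proposal follows essentially the same route as the paper: you reduce $\z(0,0)\z(s,k)'$ to the shifted-diagonal matrix (the paper's $\M^{(s,k)}$, which is set to zero when $k\neq 0$, giving part (1)), you dispose of the remainder by rerunning the $\bfD+\bfF+\bfR$ decomposition of Lemma~\ref{lem:diagonal} with $Z_{j-s,t-k}$ in place of $Z_{jt}$, you pass to singular values via Weyl's inequality, and you obtain part (4) from Lemma~\ref{lem:redorder}; part (3) by transposition is also how the paper argues. The one step you make explicit that the paper dismisses in a single sentence (``with probability tending to $1$, the matrix $\M^{(s,k)}$ has the required singular values'') is the comparison of the ordered values of the subcollection $\{D_i^\rightarrow : i \le p-|s|\}$ with the full order statistics $D_{(i)}^\rightarrow$, and there your argument has a gap.

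Specifically, your tail bound for $i>k$ invokes $a_{np}^{-2}D_{(k)}^\rightarrow \cid \Gamma_k^{-2/\alpha}$ \emph{without centering}. By Lemma~\ref{lem:pp}(1) this holds only if $\E[D^\rightarrow]=\infty$ or if the centering $c_n=n\E[Z^2]$ is asymptotically negligible, i.e.\ $n/a_{np}^2\to 0$, which requires $\beta>\alpha/2-1$. Part (2) of the theorem, however, is asserted for all $\beta\in[0,1]$, so for $\alpha\in(2,4)$ and $\beta\le\alpha/2-1$ the quantity $a_{np}^{-2}D_{(k)}^\rightarrow$ does not become small as $k\to\infty$ (it is of order $n\E[Z^2]/a_{np}^2$, which does not vanish), and dominating both order statistics by $D_{(k)}^\rightarrow$ proves nothing. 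The repair is to bound the \emph{difference} directly: interlacing gives $0\le D_{(i)}^{\rightarrow}-\la_{(i)}(\M^{(s,0)})\le D_{(i)}^\rightarrow - D_{(i+|s|)}^\rightarrow\le D_{(k+1)}^\rightarrow-D_{(p)}^\rightarrow$ for $i>k$ on the event that the top $k$ indices avoid the discarded rows, and one must then show $a_{np}^{-2}\bigl(D_{(k+1)}^\rightarrow-D_{(p)}^\rightarrow\bigr)$ is small, which splits into the centered maximum term (handled by Lemma~\ref{lem:pp}(1)) and $a_{np}^{-2}\max_i\bigl(n\E[Z^2]-D_i^\rightarrow\bigr)$; the latter needs a separate one-sided large-deviation estimate exploiting that $Z^2-\E[Z^2]$ is bounded below, e.g.\ a Fuk--Nagaev-type inequality for the left tail. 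This is a genuine missing step in the delicate regime, though it is fixable and, to be fair, the paper's own proof does not supply it either.
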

\begin{proof}
We focus on the case $\beta \in [0,1]$.  The proof is analogous to the proof of
Theorem~\ref{thm:iidmain} which was given in Section~\ref{sec:proofs}. This proof relied on the reduction of $\z\z'$ to its diagonal. If $k=0$, we will reduce $\z(0,0)\z(s,k)'$ to 
 a $p\times p$ matrix $\M^{(s,k)}$, which only takes values on its $s$th sub-diagonal.
The entries of the $s$th sub-diagonal of $\M^{(s,k)}$ are $\M^{(s,k)}_{i,i+s}$, $i=1+s_-,\ldots,p-s_+$. Here $s_+, s_-\ge 0$ are the positive and negative parts of $s$, respectively.

We sketch the steps of this reduction. Let $k\in\Z$. For simplicity of notation assume $s \ge 0$.
Define the $p\times p$ matrix $\M^{(s,k)}$,
\begin{equation*}
\M^{(s,k)}_{i,i+s}=\1_{\{ k=0\}}(\z(0,0)\z(s,0)')_{i,i+s}= \1_{\{ k=0\}}\sum_{t=1}^n Z_{it}^2\,, \quad i=1,\ldots,p-s\,,
\end{equation*}
and $\M^{(s,k)}_{ij}=0$ for all other $i,j$. We have
\begin{equation*}
\begin{split}
\big((\z(0,0)\z(s,k)'-&\M^{(s,k)}) (\z(0,0)\z(s,k)'-\M^{(s,k)})'\big)_{ij}\\&=
\sum_{u=1}^p \sum_{t_1=1}^n \sum_{t_2=1}^n Z_{i,t_1} Z_{j,t_2} Z_{u-s,t_1-k}Z_{u-s,t_2-k} \1_{\{ i\neq u-s, j\neq u-s \}}\\
&\quad \times (\1_{\{ i=j\}}+\1_{\{i\neq j, t_1=t_2\}}+\1_{\{i\neq j, t_1\neq t_2\}})\\
&= \bfD_{ij}+\bfF_{ij}+\bfR_{ij}\,.
\end{split}
\end{equation*}
Repeating the steps in the proof of Lemma~\ref{lem:diagonal}, one obtains 
\beao
a_{np}^{-4}\twonorm{\bfD+\bfF+\bfR}^2\stp 0\,.
\eeao
Therefore we also have
\begin{equation*}
\begin{split}
a_{np}^{-4}\twonorm{\z(0,0)\z(s,k)'-\M^{(s,k)}}^2 &= 
a_{np}^{-4}\twonorm{(\z(0,0)\z(s,k)'-\M^{(s,k)}) (\z(0,0)\z(s,k)'-\M^{(s,k)})'}\stp 0\,.
\end{split}
\end{equation*}

This proves part (1). Since, with probability tending to $1$, the matrix $\M^{(s,k)}$ has the required singular values, part (2) follows by Weyl's inequality.  

Finally, part (4) is a consequence of Lemma~\ref{lem:redorder}.
\end{proof}
We obtain the following result for the weak \con\ of the point processes of the points $\la_i(s,0)$, $s=0,\ldots,l$; the proof is similar
to the one of Theorem~\ref{thm:iidauto}. 
\begin{corollary}
Assume the conditions of Theorem~\ref{thm:iidauto}. 
Then, with the notation of Theorem~\ref{cor:1}, the following \pp\ \con\ holds for $ l\ge 0$ and $(\beta,\beta^{-1})\in ((\alpha/2-1)_+,1]$,
\beao
\sum_{i=1}^{ p} \vep_{a_{np}^{-2} \, \big(\la_{ (i)}({ 0},0),\ldots,\la_{ (i)}(l,0)\big)} \cid \sum_{i=1}^\infty 
\vep_{\Gamma_i^{-2/\alpha}\big(1,\ldots,1\big)} \,.
\eeao
\end{corollary}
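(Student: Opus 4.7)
The plan is to deduce the corollary by combining Theorem~\ref{thm:iidauto}(4), Lemma~\ref{lem:pp}(2), the continuous mapping theorem, and a standard uniform-perturbation argument for point processes in the vague topology.

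First, I would apply part (4) of Theorem~\ref{thm:iidauto} to each $s = 0, 1, \ldots, l$ separately. Since $l$ is fixed, a union bound over the finitely many lags yields the joint uniform approximation
$$a_{np}^{-2}\,\max_{0\le s\le l}\,\max_{1\le i\le p-l}\bigl|\la_{(i)}(s,0)-Z_{(i),np}^2\bigr|\stp 0\,.$$
Equivalently, the random points $a_{np}^{-2}\bigl(\la_{(i)}(0,0),\ldots,\la_{(i)}(l,0)\bigr)$ and the diagonally embedded points $a_{np}^{-2}Z_{(i),np}^2\,(1,\ldots,1)$ differ in sup-norm by $o_\P(1)$, uniformly in $i$. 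This is the crucial input that allows us to reduce the multivariate limit to a univariate one, since all $l+1$ coordinates are asymptotically tied to the \emph{same} sequence of order statistics.

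Second, I invoke Lemma~\ref{lem:pp}(2) combined with the continuous mapping theorem applied to the diagonal embedding $\iota:(0,\infty)\to(0,\infty)^{l+1}$, $\iota(x)=x(1,\ldots,1)$. Since $\iota$ pulls compact subsets of $(0,\infty)^{l+1}$ bounded away from the coordinate hyperplanes back to compact subsets of $(0,\infty)$, it induces a continuous map on point measures with respect to the vague topology. Hence
$$\sum_{i=1}^p\vep_{a_{np}^{-2}Z_{(i),np}^2(1,\ldots,1)}\cid \sum_{i=1}^\infty\vep_{\Gamma_i^{-2/\alpha}(1,\ldots,1)}$$
in the space of point measures on $(0,\infty)^{l+1}$ equipped with the vague topology.

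Third, I combine the first two steps via a standard approximation lemma for point processes: for any continuous function $g$ on $(0,\infty)^{l+1}$ with compact support $K$ bounded away from every coordinate hyperplane, I need
$$\sum_{i=1}^p g\bigl(a_{np}^{-2}(\la_{(i)}(0,0),\ldots,\la_{(i)}(l,0))\bigr) - \sum_{i=1}^p g\bigl(a_{np}^{-2}Z_{(i),np}^2(1,\ldots,1)\bigr)\stp 0\,.$$
By the local finiteness of the Poisson limit, only finitely many indices $i$ contribute to either sum with probability converging to $1$; uniform continuity of $g$ on a small compact enlargement of $K$ together with Step~1 then makes the difference between the summands uniformly negligible. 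The main obstacle is this third step: one must ensure that the compact enlargement stays bounded away from the boundary of the state space uniformly in $n$, and that points drifting in or out of $\operatorname{supp}(g)$ contribute nothing asymptotically. These technicalities are routine and parallel the arguments in Davis et al.~\cite{davis:mikosch:pfaffel:2015} and Resnick~\cite{resnick:2007}, so the proof ultimately reduces to assembling results already established in the paper.
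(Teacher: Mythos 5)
Your proposal is correct and follows essentially the same route the paper intends: Theorem~\ref{thm:iidauto}(4) ties all lags $s=0,\ldots,l$ to the common order statistics $Z^2_{(i),np}$, and Lemma~\ref{lem:pp}(2) together with the standard perturbation argument for point processes (exactly as in the proof of Theorem~\ref{cor:1}) yields the diagonal Poisson limit. The only detail worth adding is that the $l$ leftover points with indices $i>p-l$, which your uniform approximation does not cover, satisfy $a_{np}^{-2}\la_{(i)}(s,0)\le a_{np}^{-2}\la_{(p-l)}(s,0)=o_\P(1)$ and hence never enter a compact set bounded away from the coordinate hyperplanes.
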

The joint \con\ of a finite number of the random variables $\la_{(i)}(s,0)$, $i\ge 1$, $s\ge 0$, is an immediate con\seq\ of this result.

\appendix

\section{Regular variation, large deviations and point processes}\label{appendix:A}\setcounter{equation}{0}

Let $(Z_i)$ be iid copies of $Z$ whose distribution satisfies 
\begin{equation*}
\P(Z>x)\sim p_+ \dfrac{L(x)}{x^{\alpha}}\quad\mbox{and}\quad  \P(Z\le -x)\sim p_-
\dfrac{L(x)}{x^{\alpha}} \,,\quad \xto\,,
\end{equation*}
 for some tail index $\alpha>0$,
where $p_+,p_-\ge 0$ with $p_++p_-=1$ and $L$ is a slowly varying function. We say that $Z$ is regularly varying with index $\alpha$. The monograph \cite{bingham:goldie:teugels:1987} contains many properties and useful tools for regularly varying functions. Theorem 1.5.6 therein, which is known as Potter bounds, asserts that a regularly varying function essentially lies between two power laws. In particular,  
for any $\delta>0$ and $C>1$ we have for $x$ sufficiently large,
\begin{equation*}
C^{-1} x^{-\delta} \le L(x) \le C x^{\delta}\,.
\end{equation*}

Theorem 1.6.1 in \cite{bingham:goldie:teugels:1987}, widely known as Karamata's theorem, describes the behavior of truncated moments of the regularly varying random variable $Z$. For $x\to \infty$,
\begin{equation*}
\begin{split}
\E[|Z|^\beta \1_{\{|Z|\leq x\}}] &\sim \frac{\alpha}{\beta -\alpha} x^\beta \P(|Z|>x), \quad \beta>\alpha,\\
\E[|Z|^\beta \1_{\{|Z| > x\}}] &\sim \frac{\alpha}{\alpha-\beta} x^\beta \P(|Z|>x), \quad \beta<\alpha.
\end{split}
\end{equation*}

If $\E[|Z|]<\infty$ also assume $\E[Z]=0$. The product $Z_1Z_2$ is regular varying with the same index $\alpha$ and $\P(|Z_1Z_2|>x)= x^{-\alpha} L_1(x)$, where $L_1$ is slowly varying function different from $L$;
see Embrechts and Goldie \cite{embrechts:goldie:1980}.
Write
\begin{equation*}
S_n=Z_1+\cdots +Z_n\,,\quad n\ge 1,
\end{equation*} and consider a sequence $(a_n)$ such that $\P(|Z|>a_n)\sim n^{-1}$.

\subsection{Large deviation results}
The following theorem can be found in
Nagaev \cite{nagaev:1979} and Cline and Hsing
\cite{cline:hsing:1998} for $\alpha>2$ and $\alpha\le 2$,
respectively; see also  Denisov et al.~\cite{denisov:dieker:shneer:2008}.
\begin{theorem}\label{thm:nagaev} 
Under the assumptions on the iid sequence $(Z_t)$
given above the following relation holds
\begin{equation*}
\sup_{x\ge c_n}\left|\dfrac{\P(S_n>x )}{n\P(|Z|>x)} -p_+ \right|\to 0\,,
\end{equation*}
where $(c_n)$ is any sequence satisfying $c_n/a_n\to  \infty$ for
$\alpha\le 2$ and $c_n\ge \sqrt{(\alpha-2)n\log n}$ for $\alpha>2$.
\end{theorem}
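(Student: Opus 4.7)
The plan is to carry out the classical Nagaev ``one large jump'' argument in a way that is uniform in $x \ge c_n$. Fix $\vep \in (0,1/2)$ (which will be sent to zero after $\nto$). For $x \ge c_n$, truncate each summand at level $\vep x$: set $\wt Z_i = Z_i \1_{\{|Z_i|\le \vep x\}}$ and $\wh Z_i = Z_i - \wt Z_i$, with partial sums $\wt S_n$ and $\wh S_n$. Partition the sample space by $K_n = |\{i\le n : |Z_i|>\vep x\}|$ into events $A_k = \{K_n = k\}$. The goal is to show, uniformly over $x \ge c_n$,
$$\frac{\P(\{S_n>x\}\cap A_1)}{n\P(|Z|>x)} = p_+ + o(1) \quad \text{and} \quad \sum_{k\ne 1} \frac{\P(\{S_n>x\}\cap A_k)}{n\P(|Z|>x)} = o(1).$$

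For the multi-jump terms $k\ge 2$, a union bound gives $\P(A_k) \le \binom{n}{k}\P(|Z|>\vep x)^k$, which is $o(n\P(|Z|>x))$ because $n\P(|Z|>c_n)\to 0$ under either hypothesis on $c_n$ (by definition of $a_n$ and Potter bounds: when $\alpha\le 2$ this is immediate from $c_n/a_n\to\infty$; when $\alpha>2$ the polynomial decay $n\cdot c_n^{-\alpha} \lesssim n^{1-\alpha/2}(\log n)^{-\alpha/2}$ suffices). The main single-jump term I would handle by exchangeability: $\P(\{S_n>x\}\cap A_1) = n\,\P(\{|Z_1|>\vep x\}\cap\{|Z_i|\le \vep x,\ i\ge 2\}\cap\{S_n>x\})$. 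On this event $S_n - Z_1 = \wt S_n^{(1)} := \sum_{i\ge 2}\wt Z_i$, and splitting $Z_1>\vep x$ from $Z_1<-\vep x$ (the latter forcing $\wt S_n^{(1)}>x+\vep x$, which will be shown negligible in the next paragraph) reduces matters to $\P(Z_1 > x - \wt S_n^{(1)})$. Regular variation yields $\P(Z_1 > x(1 \pm \vep)) = \P(Z_1>x)(1+O(\vep))$, and combined with $\P(Z>x) = p_+ \P(|Z|>x)(1+o(1))$ this produces $p_+$ up to a correction vanishing as $\vep\to 0$, \emph{provided} $\wt S_n^{(1)}$ concentrates on a scale $o(x)$ uniformly in $x\ge c_n$.

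This concentration, together with the zero-jump estimate $\P(\{S_n>x\}\cap A_0) = \P(\wt S_n > x) = o(n\P(|Z|>x))$, is the technical heart of the proof and is where the two cases in the hypothesis on $c_n$ diverge. In both regimes I would apply a Fuk--Nagaev/Bernstein inequality to the centered truncated sum $\wt S_n - n\E\wt Z_1$. The mean correction satisfies $n|\E\wt Z_1| = n|\E[Z\1_{\{|Z|>\vep x\}}]| \le c\, n x\P(|Z|>x) = o(x)$ by Karamata's theorem and $n\P(|Z|>c_n)\to 0$. The truncated second moment is bounded by $\E[Z^2]$ when $\alpha > 2$, and by $c\, x^{2-\alpha}L(x)$ when $\alpha\le 2$. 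The Bernstein bound
$$\P\bigl(|\wt S_n - n\E\wt Z_1| > \vep x\bigr) \le 2\exp\!\Bigl(-\frac{c(\vep)\, x^2}{n\,\var(\wt Z_1) + \vep^2 x^2}\Bigr)$$
then needs to be shown to be $o(nx^{-\alpha}L(x))$. For $\alpha > 2$ the choice $c_n\ge \sqrt{(\alpha-2)n\log n}$ is tuned precisely so that the Gaussian factor $\exp(-cx^2/n)$ beats the polynomial $nx^{-\alpha}L(x)$ throughout $x\ge c_n$; for $\alpha \le 2$ the stronger hypothesis $c_n/a_n\to\infty$ together with Karamata plays the analogous role against the heavier truncated variance.

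The matching lower bound $\P(S_n>x) \ge n\P(Z > x(1+\vep))(1-o(1))$ follows by restricting to $\bigcup_{i_0}\{Z_{i_0}>x(1+\vep),\ |S_n - Z_{i_0}|\le \vep x\}$, using inclusion--exclusion to discard the double-jump overlaps (already bounded above) and the same Fuk--Nagaev estimate to control $|S_n - Z_{i_0}|$. Dividing by $n\P(|Z|>x)$, using slow variation to absorb $(1\pm\vep)^{-\alpha}$, sending $\vep\to 0$ and finally taking the supremum over $x\ge c_n$ delivers the theorem. The principal obstacle is the uniformity of the Fuk--Nagaev bound: the truncation level $\vep x$ and hence $\var(\wt Z_1)$ depend on $x$, so one needs a tail inequality strong enough that the exponential factor dominates $n\P(|Z|>x)$ simultaneously over the \emph{entire} range $x\ge c_n$, which is precisely why the threshold for $c_n$ bifurcates at $\alpha = 2$.
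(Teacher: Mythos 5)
First, a point of orientation: the paper does not prove this theorem at all --- it is quoted from the literature (Nagaev \cite{nagaev:1979} for $\alpha>2$, Cline and Hsing \cite{cline:hsing:1998} for $\alpha\le 2$, see also Denisov et al.\ \cite{denisov:dieker:shneer:2008}), so your argument must stand on its own. Your single-big-jump skeleton is the right one, and the easy parts are fine: the multi-jump terms are controlled by $\sum_{k\ge 2}\binom{n}{k}\P(|Z|>\vep x)^k\le c(\vep)\,(n\P(|Z|>x))^2$ together with $n\P(|Z|>c_n)\to 0$, and for the one-jump term the $o(1)$ concentration of $\wt S_n^{(1)}$ already follows from Chebyshev plus Karamata (for $\alpha\le 1$ you should bound $n\E[|Z|\1_{\{|Z|\le \vep x\}}]\le c\,n\vep x\,\P(|Z|>\vep x)=o(x)$ directly rather than invoke a mean that may not exist). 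The genuine gap is exactly at the step you call the technical heart: the zero-jump estimate $\P(\wt S_n>x)=o(n\P(|Z|>x))$ \emph{uniformly} in $x\ge c_n$. With truncation level $\vep x$ for a fixed $\vep$, your displayed Bernstein bound degenerates as soon as $\vep^2x^2$ dominates $n\,\var(\wt Z_1)$ --- which happens for all sufficiently large $x$ in the (unbounded) range --- to a quantity of the form $2\exp(-c(\vep)/\vep^2)$, a constant free of $n$ and $x$. That is certainly not $o(n\P(|Z|>x))$, since $n\P(|Z|>x)$ can be made arbitrarily small along $x\ge c_n$. To close this one needs a bound whose ``Poissonian'' regime is itself a power of the small quantity $n\,\E[Z^2\1_{\{|Z|\le \vep x\}}]/x^2$, e.g.\ the Fuk--Nagaev inequality $\P(\max_i|Z_i|\le \vep x,\,S_n>x)\le \big(e\,n\,\E[Z^2\1_{\{|Z|\le\vep x\}}]/(\vep x^2)\big)^{1/(c\vep)}$, combined with Karamata's theorem to identify this base as $O(\vep^{2-\alpha}\,n\P(|Z|>x))$ when $\alpha<2$; raising a quantity comparable to $n\P(|Z|>x)$ to a power $\ge 2$ is what delivers uniformity over the whole half-line in Cline--Hsing and Denisov et al.

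For $\alpha>2$ there is a second, sharper obstruction at the lower end of the range. Any fixed-$\vep$ Bernstein or Fuk--Nagaev bound loses a constant in the Gaussian exponent: you get at best $\exp(-c(\vep)\,x^2/(n\sigma^2))$ with $c(\vep)<1/(2\sigma^2)$, or a factor $\big(c\,n\sigma^2/x^2\big)^{1/(c\vep)}=\exp\big(-\Theta(\log\log n)/\vep\big)$ at $x\asymp\sqrt{n\log n}$, and neither of these is $o\big(n\P(|Z|>x)\big)\asymp n^{1-\alpha/2}(\log n)^{-\alpha/2}$ at the stated threshold $c_n\ge\sqrt{(\alpha-2)n\log n}$; with a degraded exponent you only cover $x\ge\sqrt{C\,n\log n}$ for some strictly larger $C$. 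Capturing the boundary constant $\alpha-2$ requires the Gaussian factor with essentially the sharp constant, $\exp\big(-x^2(1+o(1))/(2n\sigma^2)\big)$, which in Nagaev's proof comes from conjugate-exponent/refined exponential estimates with a truncation ratio tending to zero (equivalently $\vep=\vep_n\downarrow 0$ at a tuned rate, with constants tracked), not from a fixed-$\vep$ Bernstein inequality. So your assertion that the stated $c_n$ ``is tuned precisely so that the Gaussian factor beats the polynomial'' is correct in spirit but not established by the inequality you write down; as it stands the proof of the zero-jump step fails both in the deep-tail regime and at the $\alpha>2$ boundary, and this is precisely the content for which the paper defers to Nagaev and Cline--Hsing.
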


\subsection{Karamata theory for sums}
\begin{proposition}\label{prop:karasumsbeta}
Let $(c_n)$ be the threshold sequence in Theorem~\ref{thm:nagaev} for a
given $\alpha>0$,  and let $(d_n)$ be such that
$d_n/c_n\to\infty$ for $\alpha>2$ and $d_n=c_n$ for $\alpha\le 2$. Assume $0<\gamma< \alpha$.
Then we have for a sequence $x_n\ge d_n$
\begin{equation}
\E[|x_n^{-1} S_n|^\gamma \1_{\{|S_n|>x_n\}}] \sim \frac{\alpha}{\alpha -\gamma} n \P(|Z|>x_n), \qquad \nto.
\end{equation}
\end{proposition}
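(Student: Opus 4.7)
My plan is to reduce the statement to Karamata's theorem for the tail of $|Z|$ combined with Nagaev's large deviation result (Theorem A.1 above). The starting point is the standard layer-cake identity
\begin{equation*}
\E\big[|S_n|^\gamma \1_{\{|S_n|>x_n\}}\big] = x_n^\gamma\,\P(|S_n|>x_n) + \gamma\int_{x_n}^\infty t^{\gamma-1}\,\P(|S_n|>t)\,dt,
\end{equation*}
which is obtained from $\E[W\1_{\{W>a\}}]=a\P(W>a)+\int_a^\infty \P(W>t)\,dt$ applied to $W=|S_n|^\gamma$ and $a=x_n^\gamma$, followed by the substitution $t\mapsto t^\gamma$. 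Dividing by $x_n^\gamma$ gives the quantity of interest as a sum of two terms, each of which I will handle separately.

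First I would note that Theorem~\ref{thm:nagaev}, applied to both $(Z_t)$ and $(-Z_t)$ and added (using $p_++p_-=1$), yields the uniform relation
\begin{equation*}
\sup_{x\ge c_n}\Big|\frac{\P(|S_n|>x)}{n\,\P(|Z|>x)}-1\Big|\to 0,\qquad n\to\infty.
\end{equation*}
Since $x_n\ge d_n\ge c_n$ eventually (this is the reason $(d_n)$ is chosen this way), the boundary term satisfies $x_n^\gamma\P(|S_n|>x_n)\sim n\,x_n^\gamma\,\P(|Z|>x_n)$ directly. For the integral, the same uniform bound gives, for any $\vep>0$ and all $n$ large,
\begin{equation*}
(1-\vep)\,n\int_{x_n}^\infty t^{\gamma-1}\P(|Z|>t)\,dt \le \int_{x_n}^\infty t^{\gamma-1}\P(|S_n|>t)\,dt \le (1+\vep)\,n\int_{x_n}^\infty t^{\gamma-1}\P(|Z|>t)\,dt,
\end{equation*}
so the problem reduces to computing the deterministic integral on the right.

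Here I invoke Karamata's theorem: since $\P(|Z|>t)=t^{-\alpha}L(t)$ is regularly varying with index $-\alpha$, the integrand $t^{\gamma-1}\P(|Z|>t)$ is regularly varying with index $\gamma-\alpha-1<-1$, hence
\begin{equation*}
\int_{x_n}^\infty t^{\gamma-1}\P(|Z|>t)\,dt \sim \frac{1}{\alpha-\gamma}\,x_n^\gamma\,\P(|Z|>x_n),\qquad n\to\infty.
\end{equation*}
Putting the two pieces together,
\begin{equation*}
\E\big[|x_n^{-1}S_n|^\gamma\1_{\{|S_n|>x_n\}}\big] \sim n\,\P(|Z|>x_n)\Big(1+\frac{\gamma}{\alpha-\gamma}\Big) = \frac{\alpha}{\alpha-\gamma}\,n\,\P(|Z|>x_n),
\end{equation*}
which is the desired asymptotic.

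The only step that requires real care is the interchange of the integral with the Nagaev asymptotic, i.e.\ ensuring the $o(1)$ ratio is uniform over the infinite tail $[x_n,\infty)$. This works precisely because Theorem~\ref{thm:nagaev} provides a supremum over the whole range $x\ge c_n$ (with $c_n\le x_n$ eventually), so the sandwich bound above is legitimate; if the uniformity were only on a bounded range one would instead have to split the integral at $Ax_n$ and separately control the far tail using Potter bounds, but this is unnecessary here.
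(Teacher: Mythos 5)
Your proof is correct and follows essentially the same route as the paper's: both write the truncated moment via the layer-cake formula as a tail integral of $\P(|S_n|>\cdot)$, insert the uniform Nagaev asymptotic from Theorem~\ref{thm:nagaev} over the range $[x_n,\infty)$ (legitimate since $x_n\ge c_n$ eventually), and evaluate the resulting deterministic regularly varying integral. The only cosmetic difference is that you evaluate that integral by Karamata's theorem after an explicit sandwich, whereas the paper changes variables and integrates $y^{-\alpha/\gamma}$ directly; the two computations are identical up to substitution.
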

\begin{proof}
We use the notation $Y_n:=|x_n^{-1} S_n|$. Since $Y_n^\gamma \1_{\{Y_n>1\}}$ is a positive random variable one can write 
\begin{equation*}
\E[Y_n^\gamma \1_{\{Y_n>1\}}]=\int_0^\infty \P(Y_n^\gamma \1_{\{Y_n>1\}}>y) \dint y.
\end{equation*}
The probability inside the integral is 
\begin{equation*}
\begin{split}
\P(Y_n^\gamma\1_{\{Y_n>1\}}>y) &= \P(Y_n^\gamma \1_{\{Y_n>1\}}>y, Y_n>1)+\P(Y_n^\gamma \1_{\{Y_n>1\}}>y, Y_n<1)\\
&=\P(Y_n^\gamma >y, Y_n>1)= \P(Y_n > \max \{ y^{1/\gamma},1\})\\
&= \begin{cases}
\P(Y_n>1) & \text{if } y\le 1,\\ 
\P(Y_n>y^{1/\gamma}) & \text{if } y\ge 1.
\end{cases}
\end{split}
\end{equation*}
Therefore, using the uniform convergence result in Theorem~\ref{thm:nagaev}, we conclude that
\begin{equation*}
\begin{split}
\int_0^\infty \P(Y_n^\gamma \1_{\{Y_n>1\}}>y) \dint y &= \P(Y_n>1) + \int_1^\infty \P(Y_n>y^{1/\gamma}) \dint y\\
&\sim n \P(|Z|>x_n)+\int_1^\infty y^{-\frac{\alpha}{\gamma}} n \P(|Z|>x_n) \dint y\\
&= \frac{\alpha}{\alpha -\gamma} n \P(|Z|>x_n), \quad \nto.
\end{split}
\end{equation*}
\end{proof}

\subsection{A point process convergence result}
Assume that the conditions at the beginning of Appendix \ref{appendix:A} hold.
Consider a sequence of iid copies $(S_{n}^{(t)})_{t=1,2,\ldots}$
of $S_n$ and the sequence of point processes
\begin{equation*}
N_n= \sum_{t=1}^{p} \vep_{a_{np}^{-1} S_{n}^{(t)}}, \quad n=1,2,\ldots\,,
\end{equation*}
for an integer sequence $p=p_n\to\infty$. We assume that the state space of the
point processes $N_n$ is $\overline{\R}_0=[\R\cup\{\pm \infty\}]\backslash \{0\}$.
\begin{lemma}\label{lem:ppr} 
Assume $\alpha \in (0,2)$ and the
conditions of  Appendix~\ref{appendix:A}
on the iid sequence $(Z_t)$ and the normalizing sequence $(a_n)$. Then the limit relation
$N_n\cid N$ holds in the space of point measures on $\overline{\R}_0$
equipped with the vague topology (see
\cite{resnick:1987,resnick:2007})
for a  Poisson random measure $N$ with state space $\overline{\R}_0$ and intensity measure $\mu_\alpha(dx)=\alpha |x|^{-\alpha-1} (p_+ \1_{\{x>0\}}+ p_- \1_{\{x<0\}}) 
dx$.
\end{lemma}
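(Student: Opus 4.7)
}

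The plan is to invoke the standard convergence criterion for superpositions of iid points (Resnick \cite{resnick:1987}, Proposition~3.21): since the points $a_{np}^{-1}S_n^{(t)}$, $t=1,\ldots,p$, are iid with common law $\mu_n(\cdot)=\P(a_{np}^{-1}S_n\in\cdot\,)$, the point process $N_n$ converges weakly to a Poisson random measure with intensity $\mu_\alpha$ on $\overline{\R}_0$ as soon as $p\,\mu_n\to\mu_\alpha$ vaguely on $\overline{\R}_0$. Since the sets $(x,\infty)$ and $(-\infty,-x)$ for $x>0$ generate the vague topology on $\overline{\R}_0$, it suffices to prove, for every $x>0$,
\[
p\,\P(S_n>a_{np}\,x)\;\longrightarrow\;p_+\,x^{-\alpha}\qquad\text{and}\qquad p\,\P(S_n<-a_{np}\,x)\;\longrightarrow\;p_-\,x^{-\alpha}.
\]

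Next I would realise these two limits via the large deviation result in Theorem~\ref{thm:nagaev}. Since $\alpha\in(0,2)$, the theorem provides a threshold sequence $(c_n)$ with $c_n/a_n\to\infty$ such that $\P(S_n>y)\sim p_+\,n\,\P(|Z|>y)$ uniformly in $y\ge c_n$. The key observation is that because $(a_k)$ is regularly varying with index $1/\alpha>0$ and $p_n\to\infty$, one has $a_{np}/a_n\to\infty$; hence for every fixed $x>0$ and every sufficiently large $n$, $a_{np}x\ge c_n$ so Theorem~\ref{thm:nagaev} applies at $y=a_{np}x$:
\[
\P(S_n>a_{np}x)\;\sim\;p_+\,n\,\P(|Z|>a_{np}x),\qquad n\to\infty.
\]
Using the regular variation of $\P(|Z|>\cdot)$ together with the defining property $\P(|Z|>a_{np})\sim(np)^{-1}$, one obtains
\[
p\cdot n\,\P(|Z|>a_{np}x)\;=\;pn\,\frac{L(a_{np}x)}{(a_{np}x)^\alpha}\;\sim\;x^{-\alpha}\,\frac{L(a_{np}x)}{L(a_{np})}\;\longrightarrow\;x^{-\alpha},
\]
by slow variation of $L$. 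This gives $p\,\P(S_n>a_{np}x)\to p_+x^{-\alpha}$; the analogous argument applied to $(-S_n)$ (which is again regularly varying with index $\alpha$ but with the tail weights $p_\pm$ swapped) yields $p\,\P(S_n<-a_{np}x)\to p_-x^{-\alpha}$.

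Finally I would read off vague convergence $p\mu_n\to\mu_\alpha$ on $\overline{\R}_0$ (verifying the limits on the generating intervals $(a,b]$ and $[-b,-a)$, $0<a<b\le\infty$, by taking differences of the tail limits above) and invoke Resnick's Proposition~3.21 to conclude $N_n\cid N$. The main obstacle is the already-handled bookkeeping step of certifying that the large-deviation regime $y\ge c_n$ of Theorem~\ref{thm:nagaev} actually covers all values $y=a_{np}x$ for arbitrarily small $x>0$; the growth $a_{np}/a_n\to\infty$ forced by $p_n\to\infty$ is exactly what makes this work, and once it does, the rest is routine regular variation.
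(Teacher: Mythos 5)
Your proposal is correct and follows essentially the same route as the paper: reduce to the vague convergence $p\,\P(a_{np}^{-1}S_n\in\cdot)\to\mu_\alpha$ via Resnick's Proposition~3.21, and establish the tail limits by applying Theorem~\ref{thm:nagaev} at the level $a_{np}x$, which is legitimate precisely because $a_{np}/a_n\to\infty$ and $\alpha\in(0,2)$. The only difference is that you spell out the regular-variation bookkeeping ($n\,\P(|Z|>a_{np})\sim p^{-1}$ and slow variation of $L$) in more detail than the paper does.
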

\begin{proof}
According to Resnick \cite{resnick:1987}, Proposition 3.21, we need to
show that
$p\, \P(a_{np}^{-1}S_n\in \cdot)\civ \mu_\alpha
$,
where $\civ$ denotes vague convergence of Radon measures on  $\overline{\R}_0$.
Observe that we have $a_{np}/a_n\to\infty$ as $\nto$. This fact and
$\alpha\in (0,2)$ allow one to apply
Theorem~\ref{thm:nagaev}:
\begin{equation*}
\dfrac{\P( S_n >x a_{np})}{n\,\P(|Z|>  a_{np})}\to p_+ x^{-\alpha}
\quad \mbox{and}\quad \dfrac{\P( S_n \le -x a_{np})}{n\,\P(|Z|>
  a_{np})}\to p_-\, x^{-\alpha}\,,\quad x>0\,.
\end{equation*}
On the other hand, $n\,\P(|Z|>  a_{np})\sim p^{-1}$ as $\nto$.
This proves the lemma.
\end{proof}

\subsection{Auxiliary results}
Assume that the non-negative random variable $Z$ is regularly varying with index $\alpha \in (0,2)$  and $(a_n)$ is such that $n \,\P(Z>a_n)\sim 1$.
We also write 
\beao
Z_{(1)}\ge \cdots \ge Z_{(n)}\,,
\eeao
for the order statistics of the iid copies $Z_1,\ldots,Z_n$ of $Z$. 
\begin{lemma}\label{prop:seqk}
For every $\vep \in (0,0.5)$ there exists a sequence $k=k_n\to \infty$, $k<n$ such that 
\begin{equation*}
\lim_{\nto} \P(Z_{(k)} > a_{n}^{1-\vep}) = 1.
\end{equation*}
\end{lemma}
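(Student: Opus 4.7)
The plan is to reduce the statement to a binomial tail estimate. Set $q_n := \P(Z > a_n^{1-\vep})$ and let
\begin{equation*}
N_n := \sum_{i=1}^n \1_{\{Z_i > a_n^{1-\vep}\}} \sim \mathrm{Binomial}(n, q_n),
\end{equation*}
so that $\{Z_{(k)} > a_n^{1-\vep}\} = \{N_n \geq k\}$. The task then reduces to exhibiting a sequence $k_n \to \infty$ with $k_n < n$ and $\P(N_n \geq k_n) \to 1$.

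The key quantitative input is that $\mu_n := n q_n$ diverges at a polynomial rate. Writing $\P(Z>x)=L(x)x^{-\alpha}$ and using the normalization $\P(Z>a_n)\sim 1/n$, we have
\begin{equation*}
\frac{q_n}{\P(Z>a_n)}\;=\;\frac{L(a_n^{1-\vep})}{L(a_n)}\,a_n^{\vep\alpha}.
\end{equation*}
Since $a_n \to \infty$, the Potter bounds applied to $L$ give, for any $\delta\in(0,\alpha)$ and all $n$ sufficiently large, $L(a_n^{1-\vep})/L(a_n)\ge c\,a_n^{-\vep\delta}$ for some $c>0$. Hence $\mu_n \geq c\,a_n^{\vep(\alpha-\delta)}(1+o(1))$, which tends to infinity (polynomially fast in $n$, since $a_n=n^{1/\alpha}\ell(n)$). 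In particular $q_n\to 0$ and $\mu_n=o(n)$.

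The final step is a direct Chebyshev estimate. Choose $k_n := \lfloor \mu_n / 2 \rfloor$; then $k_n \to \infty$ and $k_n < n$ eventually. Since $\Var(N_n) = n q_n(1-q_n) \leq \mu_n$,
\begin{equation*}
\P(N_n < k_n) \;\leq\; \P(|N_n - \mu_n| \geq \mu_n/2) \;\leq\; \frac{4\,\Var(N_n)}{\mu_n^2} \;\leq\; \frac{4}{\mu_n} \;\to\; 0,
\end{equation*}
so $\P(Z_{(k_n)} > a_n^{1-\vep}) = \P(N_n \geq k_n) \to 1$, as required. There is no substantive obstacle in this argument; the only mildly delicate point is the lower bound $\mu_n \to \infty$ via Potter's inequality, which is completely standard. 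Incidentally, the restriction $\vep < 1/2$ in the statement is not used anywhere and can be relaxed to $\vep \in (0, 1)$.
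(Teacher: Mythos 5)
Your proof is correct, and it takes a genuinely different route from the paper's. You observe that $\{Z_{(k)}>a_n^{1-\vep}\}=\{N_n\ge k\}$ for the binomial count $N_n$ of exceedances, show $\mu_n=nq_n\to\infty$ polynomially via the Potter bounds, and then kill $\P(N_n<\lfloor\mu_n/2\rfloor)$ with a single Chebyshev bound $4/\mu_n\to0$. The paper starts from the same exact binomial identity for $\P(Z_{(k)}\le a_n^{1-\vep})$ but then Poissonizes: it bounds the binomial tail by $\ex^{-y}\sum_{r<k}y^r/r!=\P(\Gamma_k>y)$ with $y\approx nq_n$ and invokes the law of large numbers for $\Gamma_k/k$. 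That route requires controlling the second-order term $n\,q_n^2\approx n^{2\vep-1}$ in the exponent, which is exactly where the hypothesis $\vep<1/2$ enters; your Chebyshev argument needs no such control, which is why you correctly observe that the restriction to $\vep\in(0,0.5)$ is an artifact of the paper's method and any $\vep\in(0,1)$ would do (the restriction is harmless for the application in Lemma~\ref{lem:redorder}, where $\vep<1-\delta<1/2$ anyway). The paper's argument yields the slightly more flexible conclusion that any $k_n\le n^{\vep'}$ with $\vep'<\vep$ works, whereas yours fixes $k_n\asymp nq_n$; since the lemma only asserts existence of one sequence $k_n\to\infty$, both are fully adequate, and yours is the more elementary of the two.
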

\begin{proof}[Proof of Lemma~\ref{prop:seqk}]
From the theory of order statistics we know that
\beao
\P(Z_{(k)} \le a_{n}^{1-\vep}) &=& \sum_{r=0}^{k-1}\binom{n}{r} \P(Z>a_{n}^{1-\vep})^r \; \P(Z\le a_{n}^{1-\vep})^{n-r}\\
&\le& \big(\P(Z\le a_{n}^{1-\vep})\big)^{n} \sum_{r=0}^{k-1} \frac{1}{r!} \Big( \frac{n \, \P(Z>a_{n}^{1-\vep})}{\P(Z\le a_{n}^{1-\vep})} \Big)^r.
\eeao
We observe that
\beao
\big(\P(Z\le a_{n}^{1-\vep})\big)^n \sim \ex^{- n\,\big[\P(Z>a_n^{1-\vep})-0.5(\P(Z>a_n^{1-\vep}))^2(1+o(1))\big]\,} 
\eeao
Writing $\Gamma(k)$ and $\Gamma(k,y)$ for the gamma and incompete gamma \fct s, we have
\beao
\ex^{-y}\sum_{r=0}^{k-1} \frac{y^r}{r!} = \frac{\Gamma(k,y)}{\Gamma(k)}=\P(\Gamma_k>y), \qquad y\ge 0\,,
\eeao
where $\Gamma_k=E_1+\cdots +E_k$, $k\ge 1$, for an iid standard exponential \seq\ $(E_i)$.
Therefore
\beao\lefteqn{\P(Z_{(k)} \le a_{n}^{1-\vep})}\\ &\le& c\,\ex^{- n\,\big[\P(Z>a_n^{1-\vep})
-0.5(\P(Z>a_n^{1-\vep}))^2(1+o(1))\big]+  \big[n\, \P(Z>a_{n}^{1-\vep})/\P(Z\le a_{n}^{1-\vep})\big]}\\
&&\P\big(\Gamma_k> n\, \P(Z>a_{n}^{1-\vep})/\P(Z\le a_{n}^{1-\vep})\big) \\&=&
c\,\ex^{O\big(n\,(\P(Z>a_n^{1-\vep}))^2\big)}\,\P\big(k^{-1}\Gamma_k> k^{-1}n\, \P(Z>a_{n}^{1-\vep})/\P(Z\le a_{n}^{1-\vep})\big) \,.
\eeao
The \rhs\ converges to zero if $2\vep<1$ and $k\le n^{\vep'}$ for some $\vep'<\vep$. 
\end{proof}
Now consider a  $p\times n$ random matrix $\bfZ$ with iid non-negative entries $Z_{it}$ and generic element $Z$ as specified above. The number of rows $p$ satisfies the growth condition \ref{eq:p}.
\par
We write for $\delta>0$,
\beam
B_{np}^\delta&=& \{ \mbox{There is a row of $\bfZ$ with at least two entries larger than $a_{np}^\delta$.} \}\,,\label{eq:bnp}
\eeam
\begin{lemma}\label{lem:onecolumn}
Assume that $p=p_n$ satisfies the growth condition \ref{eq:p} with $\beta\in [0,1]$. Then we have 
\begin{equation*}
\lim_{\nto} \P(B_{np}^\delta) =0 \quad \mbox{for all} \quad \delta>\frac{2+\beta}{2(1+\beta)}.
\end{equation*} 
\end{lemma}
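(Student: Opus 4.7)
The plan is to control $\P(B_{np}^\delta)$ by a simple union-bound over rows followed by a second moment estimate within a single row, then use regular variation (via Potter bounds) to convert tail probabilities into power-law expressions in $n$, and finally check the resulting exponent against the threshold $(2+\beta)/(2(1+\beta))$.

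First, by the union bound and the fact that the rows of $\bfZ$ are iid,
\[
\P(B_{np}^\delta) \le p\,\P(\,\mbox{row 1 has at least two entries} > a_{np}^\delta\,).
\]
Since the entries of row 1 are iid copies of $Z$, the number of entries exceeding $a_{np}^\delta$ in that row has a $\mathrm{Binomial}(n,q_n)$ distribution with $q_n=\P(Z>a_{np}^\delta)$, so
\[
\P(\mbox{at least two entries}>a_{np}^\delta)\le \binom{n}{2}\,q_n^2 \le \tfrac12\,(nq_n)^2 .
\]
Hence $\P(B_{np}^\delta)\le \tfrac12\,p\,n^2\,q_n^{\,2}$.

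Second, I would estimate $q_n$ using regular variation of $Z$. By definition of $(a_k)$ we have $k\,\P(Z>a_k)\to 1$, hence $\P(Z>a_{np})\sim (np)^{-1}$. Writing $\P(Z>x)=x^{-\alpha}L(x)$ for a slowly varying $L$,
\[
\frac{\P(Z>a_{np}^\delta)}{\P(Z>a_{np})} = \frac{L(a_{np}^\delta)}{L(a_{np})}\,a_{np}^{\alpha(1-\delta)}.
\]
Since $a_{np}=(np)^{1/\alpha}\tilde\ell(np)$ for some slowly varying $\tilde\ell$, and $L(a_{np}^\delta)/L(a_{np})$ is slowly varying in $np$, an application of the Potter bounds gives, for any $\eta>0$ and all $n$ large,
\[
q_n = \P(Z>a_{np}^\delta) \le (np)^{-\delta+\eta}.
\]

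Third, I would plug this back in and check the exponent. Using $p=n^\beta\ell(n)$,
\[
\P(B_{np}^\delta) \le \tfrac12\, p\,n^2\,q_n^{\,2} \le c\, n^{\beta}\ell(n)\cdot n^2\cdot(np)^{-2\delta+2\eta}
= c\,\ell(n)^{1-2\delta+2\eta}\, n^{\beta+2-2\delta(1+\beta)+2\eta(1+\beta)}.
\]
Under the assumption $\delta>(2+\beta)/(2(1+\beta))$ the exponent $\beta+2-2\delta(1+\beta)$ is strictly negative, so choosing $\eta$ small enough makes the full exponent of $n$ negative; the slowly varying factor $\ell(n)^{1-2\delta+2\eta}$ is then dominated, and $\P(B_{np}^\delta)\to 0$.

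There is essentially no major obstacle: the argument is a straightforward union bound plus a Potter-bound estimate. The only point requiring some care is ensuring that the $o(1)$ error in the exponent of $q_n$ (arising from slow variation of $L$ and of $\tilde\ell$) can be absorbed into a small $\eta>0$ without destroying the strict inequality $\delta>(2+\beta)/(2(1+\beta))$; this is handled simply by choosing $\eta$ small enough compared to the gap $\delta-(2+\beta)/(2(1+\beta))$.
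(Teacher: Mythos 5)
Your proof is correct, and it reaches the same quantitative heart of the matter as the paper: everything reduces to showing $p\,(n q_n)^2\to 0$ with $q_n=\P(Z>a_{np}^\delta)$, which both you and the paper settle by the identical Potter-bound computation $q_n\le (np)^{-\delta+\eta}$ and the exponent check $\beta+2-2\delta(1+\beta)<0 \iff \delta>\tfrac{2+\beta}{2(1+\beta)}$. The difference is in how you arrive at the bound $p(nq_n)^2$. The paper introduces the counting variables $N_i=\sum_{t=1}^n \1_{\{Z_{it}>a_{np}^\delta\}}$, uses independence across rows to write $\P(B_{np}^\delta)=1-\big(\P(N_1\le 1)\big)^p=1-\big((1-q)^{n-1}(1+(n-1)q)\big)^p$ exactly, and then performs a second-order Taylor expansion of the logarithm to identify the leading term of order $p(nq)^2$. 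You instead use a union bound over the $p$ rows together with the pair-counting estimate $\P(N_1\ge 2)\le\binom{n}{2}q_n^2$, which is Markov's inequality applied to the expected number of exceedance pairs in a row. Your route is more elementary: it avoids the exact product formula and the Taylor expansion (where one must track the cancellation of the first-order terms), at the cost of giving only an upper bound rather than the asymptotically exact value of $\P(B_{np}^\delta)$ — which is all the lemma requires. The only point of care, which you correctly flag, is choosing $\eta$ small relative to the gap $\delta-\tfrac{2+\beta}{2(1+\beta)}$ so that the slowly varying factors are absorbed.
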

\begin{proof}[Proof of Lemma~\ref{lem:onecolumn}]
Assume  $\delta>\frac{2+\beta}{2(1+\beta)}$
and consider the counting variables
\begin{equation*}
N_i=\sum_{t=1}^n \1_{\{Z_{it}>a_{np}^\delta\}}, \qquad i=1,\ldots,p.
\end{equation*}
Clearly, $N_i$ are iid $\text{Bin}(n, q)$   
with $q=q_n= \P(Z>a_{np}^\delta)\to 0$ as $\nto$  and
\beao
\P(B_{np}^\delta)& =& \P(\max_{i=1,\ldots,p} N_i \ge 2)\\ &=& 1-\big(\P(N_1\le 1)\big)^p\\
&=& 1- \big((1-q)^{n-1} (1+(n-1)q)\big)^p\,.
\eeao
Thus it remains to show that the \rhs\ converges to 0. Taking  logarithms, we get
\beao
p\,\log \big((1-q)^{n-1} \,(1+(n-1)q)\big)= p\, [(n-1) \log(1-q)+\log(1+(n-1)q)].
\eeao
A second order Taylor expansion of the logarithm yields 
\beam\label{eq:lem5a21}
p\,(n-1)\, \log(1-q)+p\,\log(1+(n-1)q)= p\,q+p\,\frac{(nq)^2}{2} +O(p\,(nq^2+(nq)^3)\,).
\eeam 
By the Potter bounds we conclude that
\eqref{eq:lem5a21} 
converges to zero if $\delta>\frac{2+\beta}{2(1+\beta)}$.
The proof is complete. 
\end{proof}
For $\vep\in (0,1)$
define the events
\beao
A_i^{(n)}(\vep)&=& 
\Big\{ \sum_{t=1}^n Z_{it}- \max_{t=1, \ldots,n} Z_{it}> a_{np}^{1-\vep}  \Big\}, \qquad i=1,\ldots,p\,.
\eeao
The following result generalizes Lemma 5 in 
Auffinger et al.~\cite{auffinger:arous:peche:2009} (which in turn is a 
modified version of a result in Soshnikov~\cite{soshnikov:2004}) to the case of \regvary\ growth rates $(p_n)$.
The method of proof is different from the aforementioned literature.
\begin{lemma}\label{thm:keyresult}
Assume that $p=p_n=n^\beta \ell(n)$ where $\ell$ is a slowly varying function. 
Assume $ \beta\in (0,\infty)$ for $\alpha\in (0,1]$ and $\beta  \in (\alpha-1,\infty)$ for $\alpha\in [1,2)$.
There exists a constant $\vep\in (0,1)$ such that 
\begin{equation*}
\lim_{\nto} \P \Big( \bigcup_{i=1}^p A_i^{(n)}(\vep) \Big)=0\,.
\end{equation*}
\end{lemma}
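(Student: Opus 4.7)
The plan is to combine a union bound with a two-level decomposition according to the second order statistic of $(Z_{it})_{t=1,\ldots,n}$ in each row.

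By exchangeability,
\begin{equation*}
\P\Big(\bigcup_{i=1}^p A_i^{(n)}(\vep)\Big)\le p\,\P\Big( \sum_{t=1}^n Z_t - \max_{t=1,\ldots,n} Z_t > a_{np}^{1-\vep}\Big),
\end{equation*}
so it suffices to show that the probability on the \rhs\ is $o(1/p)$. Fix an exponent $\gamma\in (0,1)$ to be chosen later, set $z=z_n=a_{np}^{\gamma}$ and introduce the event $\{Z_{(2)}>z\}$, where $Z_{(1)}\ge \cdots \ge Z_{(n)}$ are the order statistics of $Z_1,\ldots,Z_n$. On $\{Z_{(2)}\le z\}$ at most one of the $Z_t$'s exceeds $z$ and the largest of them coincides with $\max_{t} Z_t$, hence
\begin{equation*}
\sum_{t=1}^n Z_t - \max_{t=1,\ldots,n} Z_t \;\le\; \sum_{t=1}^n Z_t \1_{\{Z_t\le z\}}\,.
\end{equation*}
Therefore
\begin{equation*}
\P\Big( \sum_{t=1}^n Z_t - \max_t Z_t > a_{np}^{1-\vep}\Big) \le \P(Z_{(2)}>z) + \P\Big(\sum_{t=1}^n Z_t \1_{\{Z_t\le z\}} > a_{np}^{1-\vep}\Big)=:P_1+P_2.
\end{equation*}

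For $P_1$ I use the crude bound $P_1\le \binom{n}{2}\P(Z>z)^2\le c\, n^2\, \P(Z>z)^2$. Since $\P(Z>a_{np})\sim (np)^{-1}$ and $\P(Z>z)$ is \regvary\ of index $-\alpha$, the Potter bounds yield $\P(Z>z)\sim (np)^{-\gamma}L^{*}(a_{np})$ for some \slvary\ $L^*$. A short calculation shows
\begin{equation*}
p\cdot P_1 \;\sim\; c\, (np)^{2-2\gamma}\,n^{-1}\, p\, L^{**}(a_{np}) \;=\; n^{(2+\beta)(1-\gamma)-\gamma\beta}\,(\text{slowly varying}),
\end{equation*}
which tends to $0$ as soon as $\gamma>\frac{2+\beta}{2(1+\beta)}$. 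I therefore fix such a $\gamma$ arbitrarily close to $\frac{2+\beta}{2(1+\beta)}$ and choose $\vep\in(0,1-\gamma)$ so that $z<a_{np}^{1-\vep}$.

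For $P_2$ I apply Chebyshev's inequality after centering. Write $\mu_n=n\,\E[Z \1_{\{Z\le z\}}]$ and $v_n=n\,\E[Z^2\1_{\{Z\le z\}}]$. Karamata's theorem (valid for $\alpha<2$) gives $v_n\sim \frac{\alpha}{2-\alpha}\, n z^2 \P(Z>z)$ and, depending on $\alpha$, either $\E[Z\1_{\{Z\le z\}}]\sim \frac{\alpha}{1-\alpha}z\P(Z>z)$ (for $\alpha<1$), or a \slvary\ function (for $\alpha=1$), or $\E[Z]$ (for $\alpha\in(1,2)$). In each case the conditions on $\beta$ ensure that $\mu_n=o(a_{np}^{1-\vep})$: the cases $\alpha\le 1$ follow from $\gamma(1+\beta)>1$, while the case $\alpha\in[1,2)$ uses $\beta>\alpha-1$, equivalently $n=o(a_{np}^{1-\vep})$ for $\vep<1-\alpha/(1+\beta)$. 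Then Chebyshev yields
\begin{equation*}
P_2 \le \frac{v_n}{(a_{np}^{1-\vep}-\mu_n)^2}\sim \frac{v_n}{a_{np}^{2(1-\vep)}}\sim c\,\frac{n z^2\P(Z>z)}{a_{np}^{2(1-\vep)}}= c\,(np)^{1-\gamma}\,a_{np}^{2(\gamma-1+\vep)}/n\cdot L^{***}(a_{np}).
\end{equation*}
Multiplying by $p$ and using $a_{np}^\alpha\sim np/L_1(a_{np})$ gives $p\cdot P_2\sim a_{np}^{-(2-\alpha)(1-\gamma)+2\vep}\cdot(\text{slowly varying})$, which tends to $0$ provided $\vep<(1-\alpha/2)(1-\gamma)$. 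Since $\alpha<2$ and $1-\gamma>0$ this is a nontrivial constraint, but positive $\vep$ is available. Choosing $\vep$ smaller than all upper bounds (namely $1-\gamma$, $(1-\alpha/2)(1-\gamma)$, and, when $\alpha\in(1,2)$, $(1+\beta-\alpha)/(1+\beta)$) closes the argument.

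The main obstacle is balancing $\gamma$ and $\vep$: the bound on $P_1$ pushes $\gamma$ upward, while the Chebyshev bound on $P_2$ and the requirement $z<a_{np}^{1-\vep}$ push $\gamma$ downward. The interval $\gamma\in\big(\tfrac{2+\beta}{2(1+\beta)},1-\vep\big)$ is nonempty exactly when $\vep<\tfrac{\beta}{2(1+\beta)}$, which is positive under the hypothesis $\beta>0$. The most delicate case is $\alpha\in(1,2)$, where the centering term $n\mu_n\sim n\E[Z]$ survives and is controlled only thanks to the hypothesis $\beta>\alpha-1$; this is precisely where that assumption is used.
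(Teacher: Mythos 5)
Your proof is correct, and it takes a genuinely different route from the paper's. The paper conditions on which entry attains the maximum, writes $p\,\P(S_n-M_n>a_{np}^{1-\vep})=n\,p\int_0^\infty\P(S_{n-1}>a_{np}^{1-\vep},\,z>M_{n-1})\,\dint\P(Z\le z)$, and splits the integration range into three regions: a lower region $[0,a_n/h_n]$ handled by a Poisson-type bound with $h_n$ tuned so that $n\P(Z>a_n/h_n)\sim 2\log(np)$, an upper region $(a_{np}^\gamma,\infty)$ handled by Nagaev's large deviation theorem (Theorem~\ref{thm:nagaev}), and a middle region handled by Markov/Chebyshev. Your decomposition on the second order statistic $\{Z_{(2)}>a_{np}^\gamma\}$ collapses all of this into two terms: the bad event is controlled by a union bound over pairs (note that your threshold $\gamma>\tfrac{2+\beta}{2(1+\beta)}$ is exactly the one in Lemma~\ref{lem:onecolumn}, of which $P_1$ is essentially a single-row version), and on the good event $S_n-M_n$ is dominated by a genuine iid truncated sum, so Chebyshev applies directly with no conditioning and no large deviation input. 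This is more elementary and arguably cleaner; what it gives up is only that your lower bound on $\gamma$ is more stringent than the paper's $\tfrac{1}{1+\beta}+\vep$, which is harmless since the interval up to $1$ remains nonempty for $\beta>0$. Your final constraints on $\vep$ match the paper's (e.g.\ $\vep<(1-\alpha/2)(1-\gamma)$ is the paper's $\gamma<1-\tfrac{2\vep}{2-\alpha}$), and you correctly isolate $\beta>\alpha-1$ as the condition needed to absorb the centering $n\E[Z]$ when $\alpha\in(1,2)$. Two cosmetic slips: the intermediate factorizations $p\cdot P_1\sim c\,(np)^{2-2\gamma}n^{-1}p\,(\cdots)$ and $P_2\sim c\,(np)^{1-\gamma}a_{np}^{2(\gamma-1+\vep)}/n\,(\cdots)$ are not consistent with the quantities they are supposed to equal (the correct factors are $(np)^{2-2\gamma}p^{-1}$ and $n\,(np)^{-\gamma}a_{np}^{2(\gamma-1+\vep)}$, respectively), but the exponents of $n$ you then extract, and hence the thresholds, are the right ones.
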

\begin{proof}
Write $M_t=\max_{i=1,\ldots,t}Z_i$.
We observe that
\beao
\P \Big( \bigcup_{i=1}^p A_i^{(n)}(\vep) \Big)&\le & p\, \P(S_n-M_n>a_{np}^{1-\vep})\\
&=& n\,p\,\P(S_{n-1}>a_{np}^{1-\vep}\,,Z_n>M_{n-1})\\
&=&n\,p\,\int_0^\infty \P(S_{n-1}>a_{np}^{1-\vep}\,,z>M_{n-1})\, \dint \P(Z\le z)\,.
\eeao
We split the integration area into disjoint sets:
\beao
[0,\infty)= [0,a_n/h_n]\cup (a_n/h_n,a_{np}^{\gamma}]\cup (a_{np}^{\gamma},\infty) =\bigcup_{i=1}^3 B_i\,.
\eeao
We choose $h_n\to\infty$ \st\ $n\,\P(Z>a_n/h_n)\sim 2\log(np)$. Then 
\beam\label{eq:hp}
\log (np)- n\,\P(Z>a_n/h_n)\to -\infty\,,\qquad n\,\big(\P(Z>a_n/h_n)\big)^2\to 0\,.
\eeam
Moreover, choose $\gamma$ and $\vep>0$ fixed \st\ $\vep<1-(1\vee \alpha)/(1+\beta)$ and
\begin{itemize}
\item $\frac{1}{1+\beta}+\vep< \gamma <1-\frac{\vep}{1-\alpha}$ if $\alpha \in (0,1)$ and
\item $\frac{1}{1+\beta}+\vep< \gamma <1-\frac{2\vep}{2-\alpha}$ if $\alpha \in [1,2)$.
\end{itemize}
By virtue of \eqref{eq:hp} we have 
\beao
n\,p\,\int_{B_1} \P(S_{n-1}>a_{np}^{1-\vep}\,,z>M_{n-1})\,\dint \P(Z\le z)&\le& n\,p\,\P(M_{n-1}\le a_n/h_n)\\
&=& \ex^{\log (np)-n\,\P(Z>a_n/h_n)+o(1)}\to 0\,.
\eeao
By definition of $\vep$, we have $(a_n+n)/a_{np}^{1-\vep}\to 0$ for $\alpha\in (0,2)$. Therefore an application of
Theorem~\ref{thm:nagaev} yields
\beao
n\,p\,\int_{B_3} \P(S_{n-1}>a_{np}^{1-\vep}\,,z>M_{n-1})\, \dint \P(Z\le z)&\le &
n\,p\,\P(S_{n-1}>a_{np}^{1-\vep})\,\P(Z>a_{np}^{\gamma})\\
&\sim &\big(n\,p\,\P(Z>a_{np}^{1-\vep})\big)\,\big(n\,\P(Z>a_{np}^{\gamma})\big)\,.
\eeao
The \rhs\ converges to zero due to the property $\gamma>1/(1+\beta)+\vep$. 
\par
Now assume $\alpha\in (0,1)$. Then we have by Markov's inequality and Karamata's theorem,
\beao
\lefteqn{n\,p\,\int_{B_2} \P(S_{n-1}>a_{np}^{1-\vep}\,,z>M_{n-1})\, \dint \P(Z\le z)}\nonumber\\&\le&
\dfrac{n^2\,p}{a_{np}^{1-\vep}}\,\int_{B_2} \E [Z\1_{\{Z\le z\}}]  \dint \P(Z\le z)\\
&\le &\dfrac{n^2\,p}{a_{np}^{1-\vep}}\,\E [Z\1_{\{Z\le a_{np}^{\gamma}\}}] \,\P(Z>a_n/h_n)\\
&\sim & c\,\dfrac{n\,p}{a_{np}^{1-\vep}}\, \big[a_{np}^{\gamma}\,\P(Z>a_{np}^{\gamma})\big] \,\log(np)\,.
\eeao
An application of the Potter bounds and using the fact that $\gamma <1-\vep/(1-\alpha)$ shows that the \rhs\ converges to zero for the chosen $\vep$.
\par
Now assume $\alpha\in [1,2)$ and $\beta >\alpha-1$. Due to the latter condition we have $n/a_{np}^{1-\vep}\to 0$. 
We obtain by \v Cebyshev's inequality and Karamata's theorem,
\beao
\lefteqn{n\,p\,\int_{B_2} \P(S_{n-1}>a_{np}^{1-\vep}\,,z>M_{n-1})\, \dint \P(Z\le z)}\nonumber\\&\le&
n\,p\,\int_{B_2} \P\Big(\sum_{t=1}^n Z_t\,\1_{\{Z_t\le a_{np}^{\gamma}\}}-n\,\E \big[Z\,\1_{\{Z\le a_{np}^{\gamma}\}}\big]>a_{np}^{1-\vep}-n\,
\E \big[Z\,\1_{\{Z\le a_{np}^{\gamma}\}}\big])\, \dint \P(Z\le z)\\
&\le &n\,p\,\int_{B_2} 
\P\Big(\sum_{t=1}^n Z_t\,\1_{\{Z_t\le a_{np}^{\gamma}\}}-n\,\E \big[Z\,\1_{\{Z\le a_{np}^{\gamma}\}}\big]
>c\,a_{np}^{1-\vep}\Big)\, \dint \P(Z\le z)\\
&\le &n\,p \,\dfrac{\E [ Z^2\,\1_{\{Z\le a_{np}^{\gamma}\}}]}{a_{np}^{2(1-\vep)}}\,\big[n\,\P(Z>a_n/h_n)\big]\\
&\sim &c\,n\,p \,\dfrac{a_{np}^{2\,\gamma} \P(Z>a_{np}^{\gamma})}{a_{np}^{2(1-\vep)}}\,\log(np)\,.
\eeao
The \rhs\ converges to zero since $\gamma <1-2\vep/(2-\alpha)$.
This finishes the proof.
\end{proof}

\subsection{Perturbation theory for eigenvectors}
We state Proposition~A.1 in Benaych-Georges and P\'{e}ch\'{e} \cite{benaych:peche}.
\begin{proposition}\label{prop:perturbation}
Let $\bfH$ be a Hermitean matrix and $\bfv$ a unit vector such that for some  $\la\in \R$, $\vep>0$,
\begin{equation*}
\bfH\,\bfv= \la\,\bfv + \vep\, \bfw\,,
\end{equation*}
where $\bfw$ is a unit vector such that $\bfw \perp \bfv$.
\begin{enumerate}
\item
Then $\bfH$ has an eigenvalue $\lambda_{\vep}$ \st\ $|\la-\la_\vep|\le \vep$.
\item
If $\bfH$ has only one eigenvalue $\la_\vep$ (counted with multiplicity) \st\ $|\la-\la_\vep|\le \vep$ and all other eigenvalues 
are at distance at least $d>\vep$ from $\la$. Then for a unit eigenvector $\bfv_\vep$ associated with $\lambda_{\vep}$ we have
\begin{equation*}
\ltwonorm{\bfv_{\vep} - \bfP_\bfv(\bfv_{\vep})} \le \frac{2\, \vep}{d-\vep}\,,
\end{equation*}
where $\bfP_{\bfv}$ denotes the orthogonal projection onto \rm{Span}$(\bfv)$.
\end{enumerate}
\end{proposition}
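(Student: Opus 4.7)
The plan is to work in an orthonormal eigenbasis of the Hermitian matrix $\bfH$ and read off both statements from a single Parseval-type identity. Write $\bfH=\sum_i \la_i \bfu_i \bfu_i^*$ with orthonormal eigenvectors $(\bfu_i)$ and real eigenvalues $(\la_i)$, and expand $\bfv=\sum_i c_i \bfu_i$ with $\sum_i|c_i|^2=1$. The hypothesis $\bfH\bfv-\la\bfv=\vep\bfw$ with $\|\bfw\|=1$ becomes $\sum_i(\la_i-\la)c_i\bfu_i=\vep\bfw$, so taking squared norms yields the key identity
\begin{equation*}
\sum_i (\la_i-\la)^2 |c_i|^2 = \vep^2.
\end{equation*}
Part (1) is then immediate: since $\sum|c_i|^2=1$, we must have $\min_i(\la_i-\la)^2\le \vep^2$, and any $\la_\vep$ attaining the minimum satisfies $|\la-\la_\vep|\le \vep$.

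For part (2), let $i_0$ be the index of the unique eigenvalue $\la_\vep$ within $\vep$ of $\la$, and let $P_\vep$ denote the orthogonal projector onto the line $\mathrm{Span}(\bfv_\vep)$. Separating the $i_0$-term from the rest, the identity gives
\begin{equation*}
\sum_{i\ne i_0}(\la_i-\la)^2 |c_i|^2 \;\le\; \vep^2,
\end{equation*}
and combining this with $|\la_i-\la|\ge d$ for $i\ne i_0$ bounds $\|(I-P_\vep)\bfv\|$ directly from the Parseval coefficients. To extract the stated constant, I would instead use the resolvent form
\begin{equation*}
(\bfH-\la I)(I-P_\vep)\bfv = \vep\bfw - (\la_\vep-\la)P_\vep\bfv,
\end{equation*}
observe that $\bfH-\la I$ is invertible on $\mathrm{Range}(I-P_\vep)$ with inverse norm at most $1/(d-\vep)$ after absorbing the shift to $\la_\vep$, and bound the right-hand side by $2\vep$ via the triangle inequality (using $\|P_\vep\bfv\|\le 1$ and $|\la_\vep-\la|\le \vep$), which gives $\|(I-P_\vep)\bfv\|\le 2\vep/(d-\vep)$.

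Finally I would transfer this bound to $\|\bfv_\vep-\bfP_\bfv\bfv_\vep\|$ by a direct decomposition. Writing $c_\vep=\langle \bfv,\bfv_\vep\rangle$ so that $\bfv=c_\vep\bfv_\vep+(I-P_\vep)\bfv$ with $|c_\vep|^2+\|(I-P_\vep)\bfv\|^2=1$, and noting $\bfP_\bfv\bfv_\vep=\overline{c_\vep}\bfv$, a short computation gives
\begin{equation*}
\|\bfv_\vep-\bfP_\bfv\bfv_\vep\|^2=(1-|c_\vep|^2)^2+|c_\vep|^2\|(I-P_\vep)\bfv\|^2=\|(I-P_\vep)\bfv\|^2,
\end{equation*}
so the previous bound immediately yields the claimed inequality. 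The main technical issue is the bookkeeping in part (2): a naive Parseval argument yields the sharper $\vep/d$, and recovering the stated $2\vep/(d-\vep)$ requires the resolvent/triangle-inequality variant above, which is looser but robust to using only the spectral gap from $\la$ rather than from $\la_\vep$.
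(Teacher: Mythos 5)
Your proof is correct and complete. Note first that the paper itself offers no proof of this proposition: it is quoted verbatim from Benaych-Georges and P\'ech\'e \cite{benaych:peche} (their Proposition~A.1), so there is no argument in the paper to compare against, and a self-contained verification like yours is genuinely useful. Your Parseval identity $\sum_i(\la_i-\la)^2|c_i|^2=\vep^2$ immediately gives part (1), and for part (2) the spectral-gap step yields $\|(I-P_\vep)\bfv\|\le \vep/d$, which already implies the stated bound because $\vep/d\le 2\vep/(d-\vep)$ for all $d>\vep>0$; the resolvent detour you propose to ``recover'' the constant $2\vep/(d-\vep)$ is therefore unnecessary (though it is also correct as sketched, and is presumably closer to how the original reference arrives at that particular constant). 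The one step worth highlighting as the crux is your final identity $\ltwonorm{\bfv_\vep-\bfP_\bfv(\bfv_\vep)}=\ltwonorm{\bfv-P_\vep\bfv}$, i.e.\ the symmetry of the sine of the angle between the lines spanned by $\bfv$ and $\bfv_\vep$; your computation of it via $(1-|c_\vep|^2)^2+|c_\vep|^2\,\ltwonorm{(I-P_\vep)\bfv}^2=\ltwonorm{(I-P_\vep)\bfv}^2$ checks out, using that the two pieces of the decomposition are orthogonal. The only implicit assumption you should state is that the multiplicity-one hypothesis in part (2) is what makes $P_\vep$ a rank-one projector onto $\mathrm{Span}(\bfv_\vep)$, so that the index $i_0$ is unambiguous; with that remark the argument is airtight and in fact proves the sharper bound $\vep/d$.
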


\section*{Acknowledgments}\setcounter{equation}{0}

We thank Richard A. Davis and Olivier Wintenberger for reading the manuscript and fruitful discussions. Special thanks go to Xiaolei Xie for providing the graphs.

\end{document}